\title[$0$-Schur algebras]{Degenerate $0$-Schur algebras and nil-Temperley-Lieb algebras}
\author{Bernt Tore Jensen, Xiuping Su and Guiyu Yang}
\thanks{This work was supported by EPSRC 1st grant EP/1022317/1 and NSFC 11671234. The first and the third authors
would like to thank the Algebra and Geometry group at the University of Bath for the hospitality during their visit to the department.
%The third author is partially supported by NSFC 11671234.
}
\newtheorem{theorem}{Theorem}[section]
\newtheorem{lemma}[theorem]{Lemma}
\newtheorem{proposition}[theorem]{Proposition}
\newtheorem{definition}[theorem]{Definition}
\newtheorem{example}[theorem]{Example}
\newtheorem{corollary}[theorem]{Corollary}
\newtheorem{remark}[theorem]{Remark}
\newcommand{\dvector}{{\underline{\mathrm{dim}}}}
\newcommand{\lra}{\longrightarrow}
\newcommand{\ra}{\rightarrow}
\newcommand{\sdp}{\times\kern-.2em\vrule height1.1ex depth-.0.5ex}
\newcommand{\epi}{\lra \kern-.8em\ra}
\newcommand{\zschur}{S_0(n, r)}
\newcommand{\lam}{\lambda}
\newcommand{\ro}{\mathrm{ro}}
\newcommand{\co}{\mathrm{co}}
\newcommand{\fkF}{\mathcal{ F}}
\newcommand{\GL}{\rm GL}
\newcommand{\bbn}{\mathbb{N}}
\newcommand{\diag}{\rm diag}
\newcommand{\bbz}{\mathbb{Z}}
\newcommand{\ut}{{\underline{t}}}
\begin{document}

\begin{abstract}
In \cite{JS} Jensen and Su constructed 0-Schur algebras on double flag varieties. The construction leads 
to a presentation of 0-Schur algebras using quivers with relations and 
the quiver approach naturally  gives rise  to a new class of algebras. That is, the
path algebras defined on the quivers of 0-Schur algebras with relations modified from 
the defining relations of 0-Schur algebras by a tuple of parameters $\ut$. In particular,
when all the entries of $\ut$ are 1, we have 0-Schur algerbas. When all the entries of
$\ut$ are zero, we obtain a class of degenerate 0-Schur algebras. We prove that 
the degenerate  algebras are associated graded algebras
and quotients of 0-Schur algebras. Moreover, we give a geometric interpretation of 
the degenerate algebras using double flag varieties, in the same spirit as \cite{JS}, 
and show how the centralizer algebras are 
related to nil-Hecke algebras and  nil-Temperly-Lieb algebras

%We study a modified  algebra obtained by replacing the idempotent relations in the  presentation of a $0$-Schur algebra in \cite{JS} by $0$-relations and construct a new algebra with the %multiplication similar to the  multiplication defined geometrically in \cite{JS}.
%The new algebra is in fact the associated  graded algebra of  the $0$-Schur algebra viewed as a filtered algebra.
%We show that the modified algebra % which turns out to be a degeneration of the $0$-Schur algebra,
%and the associated graded algebra are isomorphic to a  quotient of a $0$-Schur algebra. We also discuss various relations between Nil-Hecke algebras %\cite{Roq} and degenerate $0$-Schur algebras.  In particular using flag varieties,
%we give a realization of Nil-Temperley-Lieb algebras,  which are quotients of Nil-Hecke algebras.

\end{abstract}

\maketitle

\section{Introduction}

It is well known that the classical Schur algebras are specialisations
of $q$-Schur algebras (see \cite{DJ} and \cite{Donkin}) at $q=1$. Analogously, $0$-Schur algebras
are specialisations of $q$-Schur algebras at $q=0$.
The $0$-Schur algebras have been studied by Donkin \cite[\S2.2]{Donkin} in
terms of $0$-Hecke algebras of symmetric groups,  by
Krob and Thibon \cite{KT} in connection with noncommutative symmetric
functions, by Deng and Yang on their presentations and representation types  \cite{DY, DY2}.

A new  approach towards $0$-Schur algebras was investigated by Jensen and Su \cite{JS}, by considering the
monoid structure of the $0$-Schur algebras.
Inspired by Beilinson, Lusztig and MacPherson's geometric construction of $q$-Schur algebras \cite{BLM}
and Reineke's work on a monoid structure of Hall algebras \cite{Reineke},
Su defined a generic multiplication in the positive part of
 $0$-Schur algebras \cite{Su}. The generic multiplication  was then generalized by
Jensen and Su \cite{JS} to give a global geometric construction of  $0$-Schur algebras.
This geometric construction produces a monoid structure,
simplifies the multiplication and provides a new
approach to studying the structure of $0$-Schur algebras.
In \cite{JSY} we 
gave a  construction of  indecomposable
projective modules and studied homomorphism spaces
between projective modules  \cite{JSY}. In an ongoing work \cite{JSY3}, we study further irreducible maps,   
construct idempotents  and present  $0$-Hecke algebras and 
basic $0$-Schur algebras  using quivers with relations. 
Consequently, we obtain an alternative  account to the result on extension groups
between simple modules by  Duchamp, Hivert, and Thibon \cite{Thibon} (see also \cite{Fayers}).

The nature of  $0$-Schur algebras exposed in \cite{JS} leads to several interesting related algebras.
% can be considered as special cases of a large class of algebras defined by using quivers
%and modified relations of 0-Schur algebras given in \cite{JS}. 
First of all, we can modify the generating relations of 0-Schur algebras, relying on multiple parameters $\ut$.
In particular, when all the parameters are 1, we recover 0-Schur algebras and when 
all parameters are 0, we  obtain a class of basic algebras. We prove that similar to 
 0-Schur algebras, these newly defined algebras have reduced paths as 
basis (see \cite{JSY}),  which are in one-to-one correspondence with $\GL(V)$-orbits in double flag varieties 
or certain sets of integral matrices. So they have the same dimension as the corresponding 0-Schur algebras and 
they are proved to be   degenerate 0-Schur algebras.
We further show that they are isomorphic to
  quotients of $0$-Schur algebras by naturally
defined ideals and to the associated graded algebras of 0-Schur algebras, which notably
have a natural structure as a filtered algebra. We also investigate the relation between their centralizer algebras  and
Nil-Temperly-Lieb algebras.
%When considering another class of degenerate 0-Schur algebras, we find their relationship with
%nil-Hecke algebras \cite{Roq} (also called
%nil-Coxeter algebras, e.g. in  \cite{Fomin}).

%A few features of these algebras worth of noting are first  that
%the modified algebra is a basic algebra by construction, different   from
%$0$-Schur algebras. Second,  these algebras are degenerations of $0$-Schur algebras and so
%we call them  degenerate $0$-Schur algebras.
%Third, these algebras are
%isomorphic to the associated graded algebras of 0-Schur algebras, which notably
%have a natural structure as a filtered algebra.
%Finally, %the nil-Hecke algebras in \cite{Roq} of type $\mathbb{A}$ (also called the
%nil-Coxeter algebras \cite{Fomin}) are obtained as
%degenerate $0$-Schur algebras are closely related to   nil-Hecke algebras \cite{Roq} (also called
%nil-Coxeter algebras, e.g. in  \cite{Fomin}).

The remainder of this paper is organised as follows. In Section 2, we give a brief background on $q$-Schur and $0$-Schur algebras
and discuss how to view the $0$-Schur algebra as a filtered algebra. In Section 3, we prove some preliminary
results on a family of idempotent ideals. In Section 4, we construct a series of algebras $D_\ut(n,r)$ using quivers and modified relations of
$0$-Schur algebras and prove an isomorphism theorem between different $D_\ut(n,r)$. In Section 5, 
we first construct the associated graded algebras $DS_0(n, r)$ of 0-Schur algebras and give them a geometric interpretation. 
We then show that $D_{\ut}(n, r)$ for $\ut=\underline{0}$ is a degenerate 0-Schur algebras and prove our 
 main result that the three algebras, $DS_0(n, r)$, $D_{\underline{0}}(n,r)$ and the quotient of $S_0(n, n+r)$ modulo a natural 
idempotent ideal are isomorphic.  In Section 6, we discuss relations between centralizer algebras of the degenerate 
algebras nil-Hecke algebras and nil-Temperley-Lieb algebras.

\section{Background on the algebra $S_0(n, r)$}
\subsection{The algebra $S_q(n,r)$. }

We first recall the construction of quantised Schur algebras given by
Beilinson-Lusztig-MacPherson \cite{BLM}. Let $k$ be a field and
let $V$ be a $k$-vector space of dimension $r$. Let ${\mathcal{
F}}$ be the set of $n$-step flags
$$V_1\subseteq V_2\subseteq\cdots \subseteq V_n=V.$$
The natural action of $\GL(V)$ on
the vector space $V$  induces a diagonal action of $\GL(V)$ on $\fkF\times \fkF$ defined by $$g(f,
f')=(gf, gf'),$$ where $g\in \GL(V)$ and $f, f'\in \fkF$. Denote the orbit of $(f,f')$ by $[f,f']$.

 Let
$\Xi(n,r)$ be the set of $n\times n$-matrices $A=(a_{ij})_{i, j}$ with $a_{ij}$ nonnegative integers and $\sum_{1\leq i,j\leq
n}a_{ij}=r$. Then there is a bijection from $\fkF\times\fkF/\GL(V)$ to
$\Xi(n,r)$ sending the orbit of $(f,f')$ to $A=(a_{i,j})_{i, j}$ with
\begin{equation}\label{aij} a_{ij}=\dim_k\frac{V_i\cap
V_j'}{V_{i-1}\cap V_j'+V_i\cap V_{j-1}'}\;\;\text{for $1\leq i,j\leq
n$},\end{equation}
 where ${ f}=(V_1\subseteq V_2\subseteq\cdots \subseteq
V_n=V)$, ${ f}'=(V'_1\subseteq V'_2\subseteq\cdots \subseteq
V'_n=V)$ and $V_0=V_0'=0$ by convention.
Denote by $e_A$ the orbit in
$\fkF\times\fkF$ corresponding to $A$.
Consider the diagram
$$\xymatrix{\mathcal{F}\times \mathcal{F}\times \mathcal{F} \ar[d]^\pi \ar[r]^{\Delta\;\;\;\;\;\;\;\;\;\;} &
(\mathcal{F}\times\mathcal{F})\times (\mathcal{F}\times\mathcal{F}) \\ \mathcal{F}\times \mathcal{F}}$$
where $\pi(f,g,h)=(f,h)$ and $\Delta(f,g,h)=((f,g),(g,h))$.

Let $ \bbz[q]$ be the polynomial ring in $q$ over the ring of integers.
Following \cite[Prop.~1.2]{BLM}, for any given $A, B, C \in
\Xi(n,r)$, there is a polynomial $g_{A,B,C}\in \bbz[q]$
  such that for all
finite fields $k$, $$g_{A,B,C}(|k|) = \frac{\pi^{-1}(e_C)\cap
\Delta^{-1}(e_A\times e_B)}{|e_C|},$$
where $|k|$ and $|e_C|$ are  the cardinalities of the field $k$ and the orbit $e_C$ over $k$.

Following a remark by Du \cite{Duj}, the $q$-Schur algebra studied by Dipper and James in \cite{DJ} (see also \cite{Donkin}) can now be defined as follows.

\begin{definition} [{\cite{BLM}}] \label{def-q-Schur}
The quantised Schur algbra $S_{q}(n,r)$ is the free $\bbz[q]$-module with basis
$\{e_A \mid A\in \Xi(n,r)\}$ and with multiplication given by
$$e_A \cdot e_B=\sum_{C\in \Xi(n,r)}g_{A,B,C}(q)e_C, \;\;{\rm
for\; all}\; A,B \in \Xi(n,r).$$
\end{definition}

For an $n\times n$-matrix
$A=(a_{ij})$, define the row and column vectors of $A$ by
$$\mbox{ro}(A)=(\sum_{j=1}^n a_{1j}, \ldots,
\sum_{j=1}^n a_{nj}) \;\;\text{and}\;\;
\mbox{co}(A)=(\sum_{i=1}^n a_{i1}, \ldots, \sum_{i=1}^n
a_{in}).$$
Note that if  $g_{A,B,C}(q)\not=0$, then 
\begin{equation}\label{nonzero-condition}
\ro(A)=\ro(C),\,\co(A)=\ro(B)\;\;\text{and}\;\;\co(B)=\co(C).
\end{equation}

Let $\Lambda(n,r)$ be the set of compositions of $r$ into $n$ parts. For each
$\lambda=(\lambda_1,\ldots,\lambda_n)\in \Lambda(n,r)$, let $\diag(\lambda)$ denote the
diagonal matrix $\diag(\lambda_1,\ldots,\lambda_n)$ and write
$k_\lambda=e_{\diag(\lambda)}$. By definition, for each
$A\in\Xi(n,r)$,
\begin{equation}\label{idempotents}
k_\lambda\cdot e_A=\left\{\begin{array}{ll}
                      e_A,\;&\text{if $\lambda=\ro(A)$};\\
                     0,
                     &\text{otherwise}\end{array}\right.\;\;\text{and}\;\;
e_A\cdot k_\lambda=\left\{\begin{array}{ll}
                      e_A,\;&\text{if $\lambda=\co(A)$};\\
                     0, &\text{otherwise.}\end{array}\right.
\end{equation}
 Thus, $\sum_{\lambda\in \Lambda(n,r)}k_\lambda$ is the identity
 of $S_{q}(n,r)$.

Denote by $E_{ij}$ the elementary $n\times n$ matrix with a single nonzero entry $1$ at $(i, j)$.
We denote by $e_{i, \lam}$ (resp. $f_{j, \lam}$) the basis element of $S_q(n,r)$ corresponding to the matrix that has column vector $\lam$  and the only
nonzero off diagonal entry is $1$ at $(i, i+1)$ (resp. $(j+1, j)$).

%Let $$e_i = \sum_\lambda e_{i,\lambda} \mbox{ and } f_i = \sum_\lambda f_{i,\lambda}$$

\subsection{Definition of $S_0(n,r)$.}

From now on, let $k$ be algebraically closed.
In \cite{JS}, Jensen and Su defines a generic multiplication of orbits in $\mathcal{F}\times \mathcal{F}$
given by
\begin{equation}\label{mul}e_A\cdot e_B =\left \{\begin{tabular}{ll}  $e_C$ & if $\Delta^{-1}(e_A\times e_B)\neq \emptyset$,\\ $0$ & {otherwise},  \end{tabular}\right.\end{equation}
where $e_C$ is the unique open orbit in $\pi\Delta^{-1}(e_A\times e_B)$.
This defines an associative $\mathbb{Z}$-algebra $G(n, r)$ with basis $\Xi(n,r)$ and in fact $G(n, r)$ is 
isomorphic to the
$0$-Schur algebra (Theorem 7.2.1 in \cite{JS})
$$S_0(n,r)=S_{q}(n,r)\otimes_{\bbz[q]}\bbz,$$
where $\bbz$ is viewed as the  $\bbz[q]$-module $\bbz[q]/\langle q\rangle$. 
As the multiplication in $G(n, r)$ is much simplified (e.g. the multiplication of two orbits is either  $0$ or again 
an orbit), in the rest part of this paper, we will take $G(n, r)$ as the $0$-Schur algebra $S_0(n,r)$.

\subsection{The fundamental multiplication rules}

Note that $S_0(n,r)$ is generated by $e_{i, \lam}$, $f_{i, \lam}$ and $k_\lam$,
where $1\leq i\leq n-1$ and $\lam \in \Lambda(n, r)$ (see
Lemma 6.9 in \cite{JS}).
Let
$$
e_i=\sum_{\lambda \in \Lambda(n, r)}e_{i, \lambda} \;\mbox{ and }\; f_i=\sum_{\lambda \in \Lambda(n, r)}f_{i, \lambda}.
$$
Note that for any given orbit $e_A$, by the definition of the  multiplication, $$e_i e_A=e_{i, \mathrm{co}(A)}e_A.$$
This says that  only one term remains in the product and the same for $e_A e_i$, $e_Af_i$ and $f_ie_A$.
The following are the fundamental multiplication rules in $S_0(n,r)$,  which describe the
action of generators on basis elements.

\begin{lemma}[Lemma 6.11, \cite{JS}] \label{FundMult}
Let $e_A\in S_0(n,r)$ with $\ro(A)=\lambda$.
\begin{itemize}
\item[(1)] If $\lam_{i+1}>0$, then $e_{i} e_A=e_X$, where
$X=A+E_{i,p}-E_{i+1,p}$ with $p=\mathrm{max}\{j\mid a_{i+1,j}>0\}$.
\item[(2)] If $\lam_{i}>0$, then $f_{i} e_A=e_Y$, where
$Y=A-E_{i,p}+E_{i+1,p}$ with $p=\mathrm{min}\{j \mid a_{i,j}>0\}$.
\end{itemize}
\end{lemma}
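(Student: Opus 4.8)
The plan is to compute the generic product directly from the definition \eqref{mul}, working orbit-by-orbit. First I would use \eqref{nonzero-condition} to reduce the product $e_ie_A$ to a single term: the only summand of $e_i$ that can multiply $e_A$ nontrivially is $e_{i,\lambda}$ whose column vector is $\lambda=\ro(A)$ (the type of the middle flag), as already noted before the statement. Next I would unwind the geometric meaning of the orbit $e_{i,\lambda}$. Applying \eqref{aij} to the matrix $\diag(\lambda)-E_{i+1,i+1}+E_{i,i+1}$, a short check of all entries shows that a pair $(f,g)=((V_\bullet),(W_\bullet))$ lies in this orbit exactly when $V_j=W_j$ for every $j\neq i$ and $W_{i-1}\subseteq W_i\subsetneq V_i\subseteq W_{i+1}$ with $\dim V_i/W_i=1$. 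Thus $e_{i,\lambda}$ relates two flags that agree everywhere except that the $i$-th space of $f$ enlarges that of $g$ by one dimension.

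To evaluate $e_{i,\lambda}\cdot e_A=e_X$ I would fix a representative $(g,h)=((W_\bullet),(U_\bullet))$ of $e_A$ and choose a basis of $V$ adapted to this pair, that is, a basis whose vectors are grouped so that $a_{k\ell}$ counts those lying in $W_k\cap U_\ell$ but in neither $W_{k-1}\cap U_\ell$ nor $W_k\cap U_{\ell-1}$; then each $W_k$ and each $U_\ell$ is the span of the evident subset of basis vectors. A generic flag $f$ with $(f,g)\in e_{i,\lambda}$ is obtained by replacing $W_i$ with $W_i+L$ for a generic line $L\subseteq W_{i+1}$ with $L\cap W_i=0$, which exists precisely because $\dim W_{i+1}/W_i=\lambda_{i+1}>0$. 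Recomputing the relative position of $(f,h)$ from \eqref{aij}, only rows $i$ and $i+1$ can change; a generic spanning vector of $L$ has nonzero component on the adapted basis vector of $W_{i+1}/W_i$ sitting in the largest column, so its class lies in $U_p$ but not in $U_{p-1}$ for $p=\max\{j\mid a_{i+1,j}>0\}$. This transfers exactly one box from $(i+1,p)$ to $(i,p)$, giving $X=A+E_{i,p}-E_{i+1,p}$.

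The step I expect to be the main obstacle is verifying that this $X$ indexes the \emph{open} orbit in $\pi\Delta^{-1}(e_{i,\lambda}\times e_A)$, as required by \eqref{mul}, rather than merely one orbit that occurs. Concretely I would show that choosing $L$ whose leading component lies in a strictly smaller column $p'<p$ produces a pair $(f,h)$ lying in a configuration of strictly smaller dimension, belonging to the closure of $e_X$; the maximality of $p$ is exactly the condition singling out the generic line and hence the dense orbit. A clean way to organise this is a dimension count over the admissible box-moves, or alternatively a comparison with the leading ($q=0$) term of the known product formula in $S_q(n,r)$.

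Finally, part (2) is entirely dual and I would treat it by the same method. A pair $(f,g)\in f_{i,\lambda}$ has $f$ agreeing with $g$ except that its $i$-th space is a codimension-one subspace $V_i\subsetneq W_i$ containing $W_{i-1}$, which exists because $\lambda_i=\dim W_i/W_{i-1}>0$. A generic such hyperplane, given by a generic functional on $W_i/W_{i-1}$, fails to contain the adapted basis vector in the \emph{smallest} column $p=\min\{j\mid a_{i,j}>0\}$, so one box drops from $(i,p)$ to $(i+1,p)$ and $f_{i}e_A=e_Y$ with $Y=A-E_{i,p}+E_{i+1,p}$, the openness argument being the mirror image of the one above.
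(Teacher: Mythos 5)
Your argument is essentially correct, but note that this paper does not prove the statement at all: it is imported verbatim as Lemma~6.11 of \cite{JS}, so there is no in-paper proof to compare against. Your direct verification from the generic multiplication \eqref{mul} is sound and is in the spirit of the cited source: you correctly identify the orbit $e_{i,\lambda}$ as the pairs of flags agreeing away from position $i$ with $W_i\subsetneq V_i\subseteq W_{i+1}$ of codimension one, and the computation that a generic line $L\subseteq W_{i+1}$ with $L\not\subseteq W_i$ raises the entry in column $p=\max\{j\mid a_{i+1,j}>0\}$ is the right one. The one step you flag as the ``main obstacle'' --- that the resulting orbit is the \emph{open} one --- can be closed more cleanly than by a dimension count over box-moves: the variety $\Delta^{-1}(e_{i,\lambda}\times e_A)$ fibres over the orbit $e_A$ with fibre over $(g,h)$ an open subset of the projective space $\mathbb{P}(W_{i+1}/W_i)$, hence is irreducible; therefore $\pi\Delta^{-1}(e_{i,\lambda}\times e_A)$ is irreducible and has a unique dense orbit, which must be the one attained by the generic choice of $L$. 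The dual argument for part (2), with a generic hyperplane $W_{i-1}\subseteq V_i\subsetneq W_i$ cut out by a generic functional, is likewise correct and yields the minimal column $p=\min\{j\mid a_{i,j}>0\}$.
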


Symmetrically, there are the following formulas.

\begin{lemma} [Lemma 2.2, \cite{JSY}] \label{DFundMult}
Let $e_A\in S_0(n,r)$ with $\co(A)=\mu$.
\begin{itemize}
\item[(1)] If $\mu_{i+1}>0$, then $ e_Af_i=e_X$, where
$X=A+E_{p, i}-E_{p, i+1}$ with $p=\mathrm{max}\{j\mid a_{j, i+1}>0\}$.
\item[(2)] If $\mu_{i}>0$, then $ e_Ae_i=e_Y$, where
$Y=A-E_{p, i}+E_{p, i+1}$ with $p=\mathrm{min}\{j\mid a_{j, i}>0\}$.
\end{itemize}
\end{lemma}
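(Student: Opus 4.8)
The plan is to derive Lemma~\ref{DFundMult} directly from Lemma~\ref{FundMult} by exploiting the left--right symmetry of the construction, packaged as a transpose anti-automorphism of $S_0(n,r)$. First I would introduce the linear map $\tau\colon S_0(n,r)\to S_0(n,r)$ defined on the basis by $\tau(e_A)=e_{A^{\mathrm{T}}}$, where $A^{\mathrm{T}}$ denotes the transpose. Since $A\mapsto A^{\mathrm{T}}$ is an involution on $\Xi(n,r)$ that interchanges $\ro$ and $\co$ and fixes every diagonal matrix, $\tau$ is a linear involution with $\tau(k_\lambda)=k_\lambda$. The whole argument then rests on the claim that $\tau$ is an algebra anti-automorphism, i.e. $\tau(e_A\cdot e_B)=\tau(e_B)\cdot\tau(e_A)$ for all $A,B\in\Xi(n,r)$.

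Proving this claim is the step I expect to be the main obstacle, and I would handle it geometrically. The flag-swap $\sigma\colon\mathcal{F}\times\mathcal{F}\to\mathcal{F}\times\mathcal{F}$, $(f,f')\mapsto(f',f)$, is a $\GL(V)$-equivariant isomorphism that carries the orbit $e_A$ to $e_{A^{\mathrm{T}}}$: interchanging the two flags transposes the indices in the dimension formula~\eqref{aij}. On triples, the reversal $\rho\colon(f,g,h)\mapsto(h,g,f)$ satisfies $\pi\rho=\sigma\pi$ and intertwines $\Delta$ with $(\sigma\times\sigma)$ followed by the interchange of the two factors of $(\mathcal{F}\times\mathcal{F})\times(\mathcal{F}\times\mathcal{F})$. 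A short diagram chase then shows that $\rho$ restricts to an isomorphism $\Delta^{-1}(e_A\times e_B)\to\Delta^{-1}(e_{B^{\mathrm{T}}}\times e_{A^{\mathrm{T}}})$, so that $\sigma$ maps $\pi\Delta^{-1}(e_A\times e_B)$ isomorphically onto $\pi\Delta^{-1}(e_{B^{\mathrm{T}}}\times e_{A^{\mathrm{T}}})$. Being an isomorphism of varieties, $\sigma$ preserves irreducibility and openness, hence sends the unique open orbit of the source to that of the target and sends $\emptyset$ to $\emptyset$; recalling that $e_A\cdot e_B$ is defined via this open orbit, this yields $\tau(e_A\cdot e_B)=\tau(e_B)\cdot\tau(e_A)$ and proves the claim.

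Next I would identify the images of the generators. Writing out the matrices that define $e_{i,\lambda}$ and $f_{i,\lambda}$, transposition exchanges the off-diagonal position $(i,i+1)$ with $(i+1,i)$ and turns the prescribed column vector into a row vector; tracking the resulting bijection on the indexing compositions gives $\tau(e_{i,\lambda})=f_{i,\mu}$ for a suitably shifted $\mu$, and summing over all admissible $\lambda$ produces $\tau(e_i)=f_i$ and, symmetrically, $\tau(f_i)=e_i$.

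Finally I would apply $\tau$ to Lemma~\ref{FundMult}. For part~(1), $\tau(e_i e_A)=e_{A^{\mathrm{T}}}f_i$ and $\tau(e_X)=e_{X^{\mathrm{T}}}$; putting $B=A^{\mathrm{T}}$ (so that $\co(B)=\ro(A)$, and the hypothesis $\lambda_{i+1}>0$ becomes $\co(B)_{i+1}>0$) and using $E_{i,p}^{\mathrm{T}}=E_{p,i}$ transforms $e_i e_A=e_{A+E_{i,p}-E_{i+1,p}}$ into $e_B f_i=e_{B+E_{p,i}-E_{p,i+1}}$ with $p=\max\{j\mid b_{j,i+1}>0\}$, which is exactly Lemma~\ref{DFundMult}(1). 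Applying $\tau$ to part~(2) and substituting $B=A^{\mathrm{T}}$ in the same way yields $e_B e_i=e_{B-E_{p,i}+E_{p,i+1}}$ with $p=\min\{j\mid b_{j,i}>0\}$, which is Lemma~\ref{DFundMult}(2). The only computations involved here are the transpositions of the elementary-matrix corrections and of the $\max$/$\min$ index conditions, all of which are immediate.
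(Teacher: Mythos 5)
Your proposal is correct and takes essentially the route the paper itself intends: Lemma~\ref{DFundMult} is presented as the left--right mirror of Lemma~\ref{FundMult} (the paper just says ``Symmetrically'' and cites \cite{JSY}), and your transpose anti-automorphism $e_A\mapsto e_{A^{\mathrm{T}}}$, justified geometrically via the flag swap $(f,f')\mapsto(f',f)$ and its compatibility with $\pi$ and $\Delta$, is precisely that symmetry made rigorous. The supporting computations (the effect of the swap on the matrix in~\eqref{aij}, the identification $\tau(e_{i,\lambda})=f_{i,\lambda+\alpha_i}$ hence $\tau(e_i)=f_i$, and the transposed $\max$/$\min$ conditions) all check out.
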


\subsection{Presenting $S_0(n,r)$ by quiver with relations.} \label{presentation}

Let $$\alpha_i=(0,\ldots,0,1,-1,0,\ldots,0)\in\mathbb{Z}^n,$$ where  the only nonzero
entries $1$ and $-1$  are at the $i$th- and $(i+1)$th-positions, respectively.
We define a quiver $\Sigma(n, r)$ with vertices corresponding to $\lambda\in
\Lambda(n,r)$ and arrows
$$\xymatrix{k_{\lambda+\alpha_i} \;
\ar@/^15pt/[rr]^{f_{i, \lambda+\alpha_i}} & \;  \;  \; \;
\;  \;  \;  \;  \; \;  \;  \;  & \ar@/^15pt/[ll]^{e_{i,\lambda} } \;
 k_{\lambda}}$$
and let $\mathcal{J}\subseteq \mathbb{Z}\Sigma(n,r)$  be the ideal generated by
the binomial relations

$$P_{ij, \lambda}=k_{\mu}P_{ij}k_{\lambda},$$
$$N_{ij, \lambda}=k_{\mu}N_{ij}k_{\lambda}, $$ $$C_{ij,\lambda}=
k_{\lambda+\alpha_i-
\alpha_j}
C_{ij}k_{\lambda},$$
where
$$P_{ij}=\left\{  \begin{matrix} e_i^2e_j-
e_ie_je_i & \mbox{ for } i=j-1, \\
-e_ie_je_i+ e_je^2_{i} & \mbox{ for } i=j+1, \\ e_{i}e_{j}-e_{j} e_{i} &
\mbox{ otherwise},\end{matrix}\right.
\;\text{ and }\;\mu=\left\{ \begin{matrix}\lambda+2\alpha_i+
\alpha_j & \text{if } i=j\pm 1\\
\lambda + \alpha_i+ \alpha_j & \text{otherwise}; 
\end{matrix}
\right.
$$
$$N_{ij}=\left\{  \begin{matrix}
-f_if_jf_i+ f_jf^2_{i}& \mbox{ for }  i=j-1, \\ f_i^2f_j-
f_if_jf_i &\mbox{ for } i=j+1, \\ f_{i}f_{j}-f_{j} f_{i}
&\mbox{ otherwise}, \end{matrix}\right. 
\;\text{ and }\;\mu=\left\{ \begin{matrix}\lambda-2\alpha_i-
\alpha_j & \text{if } i=j\pm 1\\
\lambda-\alpha_i- \alpha_j & \text{otherwise}; 
\end{matrix}
\right.
$$
and 
$$
C_{ij}=e_{i}f_{j}-f_{j}e_{i}-\delta_{ij}(\sum_{\lam_{i+1}=0} k_{\lam}-\sum_{\lam_{i}=0}k_\lam).
$$

The quotient algebra $\mathbb{Z}\Sigma(n,r) / \mathcal{J}$ is isomorphic to
$S_0(n, r)$, by an isomorphism that maps the arrows $e_{i,\lambda}, f_{i,\lambda}$
and vertices $k_\lambda$ to the corresponding elements in $S_0(n,r)$ (Theorem 7.1.2 in \cite{JS}).
The relations $P_{ij, \lambda}, N_{ij, \lambda}$ are usually called the Serre relations of $S_0(n,r)$.
The relations $C_{ij, \lambda}$  is a  commutative 
relation when  $i\not= j$ or $\lambda_i\lambda_{i+1}\not= 0$,  an idempotent relation when exactly 
one of $\lambda_i$, $\lambda_{i+1}$ is zero, 
and an empty relation otherwise.

\subsection{Bases of reduced paths and $S_0(n,r)$ as a filtered algebra.}\label{grading}

We say that a path in $\Sigma(n,r)$ is reduced
if it is not equal to a path of shorter length in $\Sigma(n,r)$  modulo $\mathcal{J}$.
Equivalence classes of reduced paths form a multiplicative basis for $S_0(n,r)$,
which coincides with the basis $e_A$, $A\in \Xi(n,r)$. There are in general many
paths equal to $e_A$, modulo $\mathcal{J}$, but by the
fundamental multiplication rules and the presentation of $S_0(n,r)$,
the numbers of occurrences of $e_i$ and
$f_i$ in any reduced path are
$$E(e_A)_i = \sum_{l\leq i<m} a_{l,m}
\mbox{ and } F(e_A)_i = \sum_{m\leq i< l}a_{l,m},$$
respectively.

\begin{lemma} \label{subadd} For any $1\leq i\leq n-1$, we have
$$E(e_A)_i+E(e_B)_i-E(e_A\cdot e_B)_i=F(e_A)_i+F(e_B)_i-F(e_A\cdot e_B)_i\geq 0.$$
\end{lemma}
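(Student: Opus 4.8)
The plan is to reduce both the equality and the inequality to statements about partial sums of the entries of $A$, $B$ and $C$, proving the equality formally from \eqref{nonzero-condition} and reading the inequality off from a dimension count in the double flag variety of \eqref{mul}. Throughout I assume $e_A\cdot e_B=e_C\neq 0$, which is the only case of interest. For a matrix $A$ write $\rho_i(A)=\sum_{l\leq i}\ro(A)_l$, $\gamma_i(A)=\sum_{m\leq i}\co(A)_m$, and $D_i(A)=\sum_{l\leq i,\,m\leq i}a_{l,m}$ for the sum over the top-left $i\times i$ block. Directly from the definitions of $E(e_A)_i$ and $F(e_A)_i$ one has
\[E(e_A)_i=\rho_i(A)-D_i(A),\qquad F(e_A)_i=\gamma_i(A)-D_i(A),\]
and likewise for $B$ and $C$.

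First I would prove the equality. Since $e_A\cdot e_B=e_C\neq 0$, the conditions \eqref{nonzero-condition} give $\ro(A)=\ro(C)$, $\co(A)=\ro(B)$ and $\co(B)=\co(C)$, hence $\rho_i(A)=\rho_i(C)$, $\gamma_i(A)=\rho_i(B)$ and $\gamma_i(B)=\gamma_i(C)$ for every $i$. Substituting the displayed formulas and cancelling by means of these three identities, both differences collapse to the single expression
\[E(e_A)_i+E(e_B)_i-E(e_C)_i=\rho_i(B)-D_i(A)-D_i(B)+D_i(C)=F(e_A)_i+F(e_B)_i-F(e_C)_i,\]
which is the asserted equality. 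This step is purely formal.

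For the inequality I would pass to the geometry of \eqref{mul}. Choose a triple $(f,g,h)$ in the dense orbit of $\pi\Delta^{-1}(e_A\times e_B)$, with flag components $f=(U_\bullet)$, $g=(V_\bullet)$, $h=(W_\bullet)$; then $A$, $B$, $C$ are the matrices of $(f,g)$, $(g,h)$, $(f,h)$ respectively. Telescoping \eqref{aij} gives $\sum_{l\leq i,\,m\leq j}a_{l,m}=\dim(U_i\cap V_j)$, together with the analogous identities for $B$ and $C$, so that
\[D_i(A)=\dim(U_i\cap V_i),\quad D_i(B)=\dim(V_i\cap W_i),\quad D_i(C)=\dim(U_i\cap W_i),\]
while $\rho_i(B)=\dim V_i$. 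The common value above therefore equals
\[\dim V_i-\dim(U_i\cap V_i)-\dim(V_i\cap W_i)+\dim(U_i\cap W_i).\]
Since $U_i\cap V_i$ and $V_i\cap W_i$ both lie in $V_i$, so does their sum, while their intersection is $U_i\cap V_i\cap W_i\subseteq U_i\cap W_i$; the dimension formula for a sum of two subspaces then yields $\dim(U_i\cap V_i)+\dim(V_i\cap W_i)\leq \dim V_i+\dim(U_i\cap W_i)$, which is exactly the required nonnegativity.

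The main obstacle is the inequality, and specifically the passage to the geometric model: once $A$, $B$, $C$ are realised through a single generic triple of flags the estimate becomes the elementary subspace inequality above, but this realisation is essential, since there seems to be no way to control $D_i(C)$ purely combinatorially in terms of $A$ and $B$. The equality, by contrast, uses only the compatibility conditions \eqref{nonzero-condition}. As a sanity check one sees both phenomena on the level of paths as well: the only relation that changes the number of generators $e_i$ or $f_i$ is the idempotent relation $C_{ii,\lambda}$, which removes one $e_i$ and one $f_i$ simultaneously, so reduction decreases the two counts equally and never increases either, matching both the equality and the sign of the common value.
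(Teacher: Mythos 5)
Your proof is correct, but it takes a genuinely different route from the paper. The paper proves both the equality and the inequality in one stroke by a rewriting argument on paths: a reduced path for $e_A$ concatenated with one for $e_B$ is reduced to a path for $e_A\cdot e_B$ using the defining relations, and the only relations that change the letter counts are the idempotent relations $C_{ii,\lambda}$, each of which removes one $e_i$ and one $f_i$ simultaneously; hence the two defects coincide and are nonnegative. You instead split the claim: the equality becomes pure bookkeeping with partial row and column sums via \eqref{nonzero-condition}, and the inequality is obtained geometrically by realising $A$, $B$, $C$ through a triple of flags $(U_\bullet,V_\bullet,W_\bullet)$ with $(f,h)$ in the open orbit $e_C$, telescoping \eqref{aij} to get $D_i(A)=\dim(U_i\cap V_i)$ etc., and invoking the submodular inequality $\dim(U_i\cap V_i)+\dim(V_i\cap W_i)\leq\dim V_i+\dim(U_i\cap W_i)$. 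This is in effect a direct proof of the geometric statement that the paper only establishes later (Lemma \ref{lemma4.1}, which the paper deduces \emph{from} Lemma \ref{subadd}); your argument reverses that logical order, which is legitimate and not circular since you prove the subspace inequality independently. What each approach buys: the paper's argument is shorter and stays entirely inside the quiver presentation, but it is somewhat informal (it presumes one can track letter counts through an arbitrary reduction to a reduced word); yours gives an explicit identification of the common defect as $\dim V_i-\dim(U_i\cap V_i)-\dim(V_i\cap W_i)+\dim(U_i\cap W_i)$, which is more self-contained and makes the later geometric interpretation of $\star$ immediate. Two cosmetic points: you should say you choose a triple in $\Delta^{-1}(e_A\times e_B)$ whose image under $\pi$ lies in the open orbit $e_C$ (the ``dense orbit'' itself consists of pairs, not triples), and you should note explicitly that the statement is vacuous when $e_A\cdot e_B=0$, i.e.\ when $\co(A)\neq\ro(B)$.
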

\begin{proof}
The generating relations $P_{ij}$ and $N_{ij}$ do not change the length of a path. The
length of a path decreases, when we apply the relations $C_{ii}$, that is, we replace
an $e_if_i$ or $f_ie_i$ by an idempotent. Note that in this case, where the length of a path
does decrease, the numbers of the occurrences of $e_i$ and $f_i$ decrease at the same
pace. Therefore
$$E(e_A)_i+E(e_B)_i-E(e_A\cdot e_B)_i=F(e_A)_i+F(e_B)_i-F(e_A\cdot e_B)_i\geq 0,$$
as stated.
%That $E(e_A)_i+E(e_B)_i\geq E(e_A\cdot e_B)_i$ follows from the relations in the
%presentation of $S_0(n,r)$, similarly for $F$. The length of a path may decrease
%when we apply the relations $C_{ij}$, which replace an $e_if_i$ or $f_ie_i$
%by an idempotent. So $E(e_A)_i+E(e_B)_i-E(e_A\cdot e_B)_i
%=F(e_A)_i+F(e_B)_i-F(e_A\cdot e_B)_i$.
\end{proof}

We define vectors $E(e_A)$ and $F(e_A)$ by
$$E(e_A)=(E(e_A)_i)_i \;\mbox{ and } \; F(e_A)=(F(e_A)_i)_i.$$
If we compare tuples $(E(e_A),F(e_A))$ componentwise, we have the following.

\begin{corollary}\label{cor1}
$S_0(n,r)$ is a filtered algebra with degree function $e_A\mapsto (E(e_A),F(e_A))$.
\end{corollary}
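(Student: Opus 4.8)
The plan is to construct the filtration directly from the degree function and reduce the entire statement to the inequality already established in Lemma~\ref{subadd}. Index the filtration by the commutative monoid $M=\mathbb{N}^{n-1}\times\mathbb{N}^{n-1}$, with componentwise addition and the componentwise partial order $\leq$. For $d=(d^E,d^F)\in M$, set
$$S_0(n,r)_{\leq d}=\mathrm{span}_{\bbz}\{e_A \mid E(e_A)\leq d^E \text{ and } F(e_A)\leq d^F\},$$
where the two conditions are read componentwise over $1\leq i\leq n-1$. By construction $S_0(n,r)_{\leq d}\subseteq S_0(n,r)_{\leq d'}$ whenever $d\leq d'$, and since every basis element $e_A$ carries the well-defined degree $(E(e_A),F(e_A))$ given by the occurrence formulas preceding Lemma~\ref{subadd}, one has $\bigcup_{d\in M}S_0(n,r)_{\leq d}=S_0(n,r)$. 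Thus the family is an exhaustive, increasing, $M$-indexed filtration by $\bbz$-submodules; it remains only to check compatibility with the unit and with multiplication.

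For the unit, recall that the identity of $S_0(n,r)$ is $\sum_\lambda k_\lambda$, and each $k_\lambda=e_{\diag(\lambda)}$ corresponds to a diagonal matrix, so no off-diagonal entries contribute and $E(k_\lambda)=F(k_\lambda)=0$. Hence $1\in S_0(n,r)_{\leq 0}$, as required for a filtered algebra.

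The essential point is multiplicativity, $S_0(n,r)_{\leq d}\cdot S_0(n,r)_{\leq d'}\subseteq S_0(n,r)_{\leq d+d'}$, and here the geometric model $G(n,r)\cong S_0(n,r)$ does the crucial work: the product of two basis elements is either $0$ or again a single basis element, say $e_A\cdot e_B=e_C$. If the product is $0$ there is nothing to prove, so assume $e_A\cdot e_B=e_C$. By Lemma~\ref{subadd}, for every $i$ we have
$$E(e_A\cdot e_B)_i\leq E(e_A)_i+E(e_B)_i \quad\text{and}\quad F(e_A\cdot e_B)_i\leq F(e_A)_i+F(e_B)_i.$$
Consequently, if $E(e_A)\leq d^E$, $F(e_A)\leq d^F$ and $E(e_B)\leq (d')^E$, $F(e_B)\leq (d')^F$, then componentwise $E(e_C)\leq d^E+(d')^E$ and $F(e_C)\leq d^F+(d')^F$, i.e. $e_C\in S_0(n,r)_{\leq d+d'}$. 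Extending by $\bbz$-bilinearity over the spanning basis elements of the two factors gives the inclusion for arbitrary elements.

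Because the difficult content has already been isolated—namely, that products of basis elements are again single basis elements (the geometric simplification of the multiplication in $G(n,r)$) and that the occurrence numbers are subadditive (Lemma~\ref{subadd})—there is no genuine obstacle left. The only points demanding care are conceptual rather than computational: one must allow the index set $M$ to be a \emph{partially} ordered monoid rather than a totally ordered one, so that ``filtered'' is interpreted in the appropriate $\mathbb{N}^{2(n-1)}$-graded sense, and one must verify that the unit sits in the bottom piece $S_0(n,r)_{\leq 0}$, which is exactly the observation that diagonal matrices have vanishing degree.
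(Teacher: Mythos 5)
Your proposal is correct and follows essentially the same route as the paper: the paper states the corollary as an immediate consequence of Lemma~\ref{subadd} (comparing the tuples $(E(e_A),F(e_A))$ componentwise), and you have simply written out the routine verification --- the explicit filtration pieces, the unit in degree zero, and multiplicativity via the subadditivity of the occurrence numbers together with the fact that a product of basis elements is again a single basis element or zero. Nothing is missing.
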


For $j\geq i$, let
$$e(i,j)=e_i\cdot e_{i+1}\cdot ... \cdot e_j\mbox{ and }f(j,i)=f_j\cdot ... \cdot f_{i+1}\cdot f_i.$$
We give two explicit
descriptions of reduced paths equal to a basis element $e_A$, analogous to the
monomial and PBW-basis of $S_q(n,r)$, respectively.

\begin{lemma} \label{monomial} Let $e_A\in S_0(n,r)$ with $A=(a_{i,j})_{i,j}$. Then we can write $e_A$ as reduced paths as follows.
\begin{itemize}
\item[(1)] $e_{A}=
(\prod_{s=n-1}^{1}\prod_{l=1}^s e_l^{\sum_{1\leq  p \leq l}a_{p, s+1}})
\cdot (\prod_{s=1}^{n-1}\prod_{l=n-1}^{s}f_{l}^{\sum_{l<p\leq  n}a_{p,s}})
\cdot k_{\co(A)}$;
\item[(2)] $e_A = (\prod_{j=n}^{2} \prod_{i=j-1}^{1} e(i,j-1)^{a_{ij}})\cdot
(\prod_{j=1}^{n-1} \prod_{i=n}^{j+1} f(i-1,j)^{a_{ij}}) \cdot k_{\co(A)}$.
%(\prod_{i=2}^{n} \prod_{j=i-1}^{n-1} f(i-1,j)^{a_{ij}}) \cdot k_{\co(A)}$
\end{itemize}
\end{lemma}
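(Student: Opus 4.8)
The plan is to evaluate each product from right to left, starting from $k_{\co(A)}=e_{\diag(\co(A))}$, and to apply the fundamental multiplication rules of Lemma~\ref{FundMult} repeatedly, tracking the matrix obtained at each stage. The guiding picture is that $e_l$ moves one box up from row $l+1$ to row $l$ within a single column, while $f_l$ moves one box down from row $l$ to row $l+1$; both preserve column sums, so the column sums stay equal to $\co(A)$ throughout. Since $\diag(\co(A))$ concentrates all the mass of column $j$ at the diagonal entry $(j,j)$, the whole calculation amounts to redistributing, within each column $j$, the mass $\sum_p a_{pj}$ into the prescribed entries $a_{pj}$: the $f$-factors push boxes below the diagonal and the $e$-factors pull boxes above it.

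For part (2) I would first record the effect of the composite generators under the assumption (justified below) that at each step the box to be moved is selected from column $j$: the word $f(i-1,j)=f_{i-1}\cdots f_j$ then carries one box straight down column $j$ from $(j,j)$ to $(i,j)$, and $e(i,j-1)=e_i\cdots e_{j-1}$ carries one box straight up column $j$ from $(j,j)$ to $(i,j)$. Granting this, applying $f(i-1,j)^{a_{ij}}$ (resp.\ $e(i,j-1)^{a_{ij}}$) places $a_{ij}$ boxes at $(i,j)$, so the $f$-part fills in all subdiagonal entries $a_{ij}$ $(i>j)$ and the $e$-part all superdiagonal entries $a_{ij}$ $(i<j)$; the diagonal entry of column $j$ then decreases from $\sum_p a_{pj}$ to $a_{jj}$, and the resulting matrix is exactly $A$.

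The crux, and the step I expect to be the main obstacle, is verifying the hypotheses of Lemma~\ref{FundMult} at every single application: that the relevant row sum is positive (so the rule applies and a box is genuinely available) and, more delicately, that the $\min$/$\max$ choosing the box picks out the column currently being processed. Here the ordering built into the products is essential. In the $f$-part one processes the columns in decreasing order $j=n-1,\ldots,1$, so that every previously created off-diagonal entry, and every surviving diagonal entry, sitting in the rows $l=j,\ldots,i-1$ lies in a column $\geq j$, while column $j$ itself is occupied by the box being moved; thus column $j$ is the leftmost occupied column in each such row and the rule $p=\min\{j'\mid a_{l,j'}>0\}$ returns $j$. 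Dually, in the $e$-part one processes columns in increasing order $j=2,\ldots,n$, so that in the rows $l+1\leq j$ that matter every other occupied column is $<j$, and $p=\max\{j'\mid a_{l+1,j'}>0\}$ returns $j$. I would prove these two statements by maintaining, through an induction on the number of factors applied, an invariant describing exactly which columns are occupied in each row at each stage.

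Part (1) is handled by the same method, the only difference being that boxes are moved one level at a time rather than along an entire column: for fixed column $s$ one applies $f_s^{\sum_{p>s}a_{p,s}}$, then $f_{s+1}^{\sum_{p>s+1}a_{p,s}}$, and so on, which leaves precisely $a_{p,s}$ boxes in row $p$ for each $p>s$, and the superdiagonal entries are produced symmetrically by the $e$-part. The same invariant shows that the $\min$/$\max$ selection again returns the column being processed, using that the $f$-part runs over $s=n-1,\ldots,1$ and the $e$-part over the columns $s+1=2,\ldots,n$. Finally, in both parts the product equals $e_A$, and a direct count shows that the total number of occurrences of $e_l$ equals $E(e_A)_l$ and of $f_l$ equals $F(e_A)_l$; since these are the numbers of generators in any reduced path representing $e_A$, the expressions admit no further length reduction and are therefore reduced.
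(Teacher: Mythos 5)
Your proof is correct and follows essentially the same route as the paper: build the matrix by applying the fundamental multiplication rules of Lemma~\ref{FundMult} right-to-left starting from $k_{\co(A)}$, with the ordering of the factors guaranteeing that the $\min/\max$ in those rules always selects the column being processed, and then conclude reducedness by counting that the occurrences of $e_l$ and $f_l$ are exactly $E(e_A)_l$ and $F(e_A)_l$. The paper's own proof is just a terser version of this (it delegates the column-selection bookkeeping to a worked example), so your added invariant argument is a welcome elaboration rather than a different method.
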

\begin{proof}
The formula follows by repeatedly applying the fundamental multiplication rules
in Lemma \ref{FundMult}. In both (1) and (2), the numbers of occurrences
of $e_i$ and $f_i$ are  $E(e_A)_i$ and $F(e_A)_i$. So the paths are reduced.
\end{proof}

We explain by an example on how to achieve the above formulae.
%The first is closely related
%to the degree function $e_A\mapsto (E(e_A),F(e_A))$, and the second
%will be used in the proof of Proposition \ref{dimensionlemma}.

\begin{example}
Let $A=(a_{ij})$, where $$A=\left(
         \begin{array}{ccc}
           0 & 1 &2 \\
           3 & 0 & 4\\
           5 & 6 & 0 \\
         \end{array}
       \right) \mbox{ and } B=\left(\begin{array}{ccc}
8 & 0 & 0 \\ 0 & 7 & 0 \\ 0 & 0 & 6\end{array}\right).$$
Note that $e_B = k_{\co(A)}$.
Using the fundamental multiplication
rules, we have the following.
$$(1) \;\;\;\;\;\;B=\left(\begin{array}{ccc}
8 & 0 & 0 \\ 0 & 7 & 0 \\ 0 & 0 & 6\end{array}\right) \stackrel{f_2^6}\longrightarrow
\left(\begin{array}{ccc}
8 & 0 & 0 \\ 0 & 1 & 0 \\ 0 & 6 & 6\end{array}\right)\stackrel{f_1^8}\longrightarrow
\left(\begin{array}{ccc}
0 & 0 & 0 \\ 8 & 1 & 0 \\ 0 & 6 & 6\end{array}\right)\stackrel{f_2^5}\longrightarrow$$
$$\left(\begin{array}{ccc}
0 & 0 & 0 \\ 3 & 1& 0 \\ 5 & 6 & 6\end{array}\right)\stackrel{e_1}\longrightarrow
\left(\begin{array}{ccc}
0 & 1 & 0 \\ 3 & 0 & 0 \\ 5 & 6 & 6\end{array}\right)\stackrel{e_2^6}\longrightarrow
\left(\begin{array}{ccc}
0 & 1 & 0 \\ 3 & 0 & 6 \\ 5 & 6 & 0\end{array}\right)\stackrel{e_1^2}\longrightarrow\left(
         \begin{array}{ccc}
           0 & 1 &2 \\
           3 & 0 & 4\\
           5 & 6 & 0 \\
         \end{array}
       \right)=A.$$

That is, 

$$
\aligned e_A
=&\;e_1^2\cdot e_2^6\cdot e_1\cdot f_2^5\cdot f_1^8\cdot f_2^6 \cdot e_B\\
=&\;e_1^{a_{13}}\cdot e_2^{(a_{13}+a_{23})}\cdot e_1^{a_{12}}\cdot f_2^{a_{31}}\cdot f_1^{(a_{21}+a_{31})}\cdot f_2^{a_{32}} \cdot e_B.\\\\
\endaligned
$$

\vspace{2mm}

$$(2)  \;\;\;\;\;\;  B=\left(\begin{array}{ccc}
8 & 0 & 0 \\ 0 & 7 & 0 \\ 0 & 0 & 6\end{array}\right) \stackrel{f_2^6}\longrightarrow
\left(\begin{array}{ccc}
8 & 0 & 0 \\ 0 & 1 & 0 \\ 0 & 6 & 6\end{array}\right)\stackrel{(f_2f_1)^5}\longrightarrow
\left(\begin{array}{ccc}
3 & 0 & 0 \\ 0 & 1 & 0 \\ 5 & 6 & 6\end{array}\right)\stackrel{f_1^3}\longrightarrow$$
$$\left(\begin{array}{ccc}
0 & 0 & 0 \\ 3 & 1 & 0 \\ 5 & 6 & 6\end{array}\right)\stackrel{e_1}\longrightarrow
\left(\begin{array}{ccc}
0 & 1 & 0 \\ 3 & 0 & 0 \\ 5 & 6 & 6\end{array}\right)\stackrel{(e_1e_2)^2}\longrightarrow
\left(\begin{array}{ccc}
0 & 1 & 2 \\ 3 & 0 & 0 \\ 5 & 6& 4\end{array}\right)\stackrel{e_2^4}\longrightarrow\left(
         \begin{array}{ccc}
           0 & 1 &2 \\
           3 & 0 & 4\\
           5 & 6 & 0 \\
         \end{array}
       \right)=A.$$

That is,  

$$
\aligned e_A=& \;e_2^4\cdot (e_1e_2)^2\cdot e_1\cdot f_1^3\cdot (f_2f_1)^5\cdot f_2^6\cdot e_B\\
=& \;e(2,2)^4\cdot e(1,2)^2\cdot e(1,1)\cdot f(1,1)^3\cdot f(2,1)^5\cdot f(2,2)^6\cdot e_B\\
=& \;e(2,2)^{a_{23}}\cdot e(1,2)^{a_{13}}\cdot e(1,1)^{a_{12}}\cdot f(1,1)^{a_{21}}\cdot f(2,1)^{a_{31}}\cdot f(2,2)^{a_{32}}\cdot e_B.
\\
\endaligned
$$

\vspace{2mm}

In both processes, the the lower triangular parts/upper triangular parts are created column-wise. The main differences are, 
for instance in the lower triangular parts, in (2) each step creates an entry $a_{ij}$ at a time, starting from the lowest entry and then moving upwards, while in (1) it goes downwards and in each step we apply the maximal times of $f_i$ so that afterwards 
we have exactly right entry in row $i$.
%The difference in the two processes for the lower triangular  entries is that we produce the $i$th row from the $(i-1)$th row from right to left in (1) and
%in (2) we produce  the entries column-wise from  right to left and in each column $f(i-1, j)^{a_{ij}}$ gives the $a_{ij}$ at the $(ij)$ entries, and it is similar for the upper triangular entries.
%Finally, by the relations $P_{ij}$, the two expressions of $e_A$ as products of $e_i$ and $f_j$ are indeed equal.
\end{example}

\section{Idempotents and idempotent ideals of $S_0(n, r)$}\label{ideals}

The set of compositions $\Lambda(n,r)$ (i.e. the vertices in the quiver $\Sigma(n,r)$)
can be drawn on an $(n-1)$-simplex, where
compositions with zero entries lie on the boundary. We call an idempotent
$k_\lambda$ boundary if $\lambda$ lies on the boundary of the simplex, and interior otherwise.
In this section, we are interested in the ideal generated by all boundary
idempotents. We obtain a dimension formula
for the quotient algebra, which turns out to be useful when we consider
degenerate $0$-Schur algebras Section 5.

Let $I_i(n, r)$ be the ideal generated by the idempotents $k_\lam$ with
 the number of nonzero entries in $\lam$ less than or equal to $i$.
That is, $I_i(n,r)$ is generated by idempotents corresponding to
$(i-1)$-faces of the simplex.
There are in general
several $(i-1)$-faces in $\Sigma(n,r)$, but as the following lemma shows, any one of them contains enough idempotents to generate the whole of $I_i(n,r)$.

\begin{lemma} The ideal 
$I_i(n,r)$ is generated by all idempotents $k_\lam$ lying in any chosen
$(i-1)$-face in $\Sigma(n,r)$.
\end{lemma}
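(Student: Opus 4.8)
The plan is to fix the chosen $(i-1)$-face, say the one spanned by the vertices in a subset $S\subseteq\{1,\dots,n\}$ with $|S|=i$, and to write $J$ for the ideal generated by all $k_\lambda$ with $\supp(\lambda)\subseteq S$ (these are exactly the idempotents lying in that face). Since each such $\lambda$ has at most $i$ nonzero entries, $J\subseteq I_i(n,r)$ is immediate, so the whole content is the reverse inclusion: I must show that every generator $k_\mu$ of $I_i(n,r)$, i.e.\ every $k_\mu$ with $|\supp(\mu)|\le i$, lies in $J$.

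The engine of the proof is a single ``block-moving'' identity. Suppose $\lambda$ has $\lambda_p=c\ge 1$ and $\lambda_{p+1}=0$ for some $p$, and let $\lambda'=\lambda-c\alpha_p$, so that $\lambda'$ is obtained from $\lambda$ by transporting the whole block at position $p$ to the empty position $p+1$; in particular $\supp(\lambda')=(\supp(\lambda)\setminus\{p\})\cup\{p+1\}$. Using the fundamental multiplication rules of Lemma \ref{FundMult} one checks, peeling off one unit at a time, that
$$e_p^{\,c}f_p^{\,c}k_\lambda=k_\lambda\qquad\text{and}\qquad f_p^{\,c}e_p^{\,c}k_{\lambda'}=k_{\lambda'},$$
because $\lambda_{p+1}=0=\lambda'_p$ forces the intermediate matrices to carry a single off-diagonal block which $f_p$ (resp.\ $e_p$) simply moves back. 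Now $f_p^{\,c}k_\lambda$ is a basis element with column vector $\lambda'$, hence equals $(f_p^{\,c}k_\lambda)k_{\lambda'}$; therefore $k_\lambda=e_p^{\,c}(f_p^{\,c}k_\lambda)k_{\lambda'}\in S_0(n,r)\,k_{\lambda'}\,S_0(n,r)=(k_{\lambda'})$, and symmetrically $k_{\lambda'}\in(k_\lambda)$ via the second identity. Thus $(k_\lambda)=(k_{\lambda'})$: moving an entire block to an adjacent empty position leaves the generated ideal unchanged, and the same holds with the roles of $p$ and $p+1$ interchanged.

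With this tool the problem becomes purely combinatorial. I would regard the nonzero entries of a composition as indistinguishable tokens placed on the positions $1,\dots,n$ of a line; the block-moving identity then says that sliding a token (together with its whole block) into an adjacent empty position does not change the generated ideal. Given $\mu$ with $|\supp(\mu)|=t\le i$, the aim is to slide its $t$ tokens into positions lying in $S$. Since $|S|=i\ge t$ one may choose $t$ target positions inside $S$, one for each token in increasing order, and since on a line any two configurations with the same number of tokens and at least one empty cell are connected by such single-token slides (the degenerate case $t=n$ forces $i=n$, $\supp(\mu)=S$, and is trivial), one reaches a composition $\lambda$ with $\supp(\lambda)\subseteq S$ and $(k_\mu)=(k_\lambda)$. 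Hence $k_\mu\in(k_\lambda)\subseteq J$, which gives $I_i(n,r)\subseteq J$ and therefore $I_i(n,r)=J$.

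I expect the block-moving identity to be the main obstacle: one must verify from Lemma \ref{FundMult} that applying $e_p$ and $f_p$ exactly $c$ times returns the diagonal matrix $k_\lambda$, and, more importantly, that the intermediate element $f_p^{\,c}k_\lambda$ genuinely factors through $k_{\lambda'}$ so that the containment $(k_\lambda)\subseteq(k_{\lambda'})$ holds. The token-sliding step is standard once the identity is available; the only point to keep in mind is that tokens cannot pass one another on the line, so one routes them in an order-preserving fashion and uses the positions outside $\supp(\mu)$ as free space to maneuver.
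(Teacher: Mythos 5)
Your proposal is correct and takes essentially the same route as the paper: its ``block-moving identity'' is precisely the paper's key identity $k_\lambda=f_{i}^{\lambda_{i+1}}k_\mu e_i^{\lambda_{i+1}}$ (with $\mu$ the adjacent transpose of $\lambda$), which shows $(k_\lambda)=(k_\mu)$, and the rest is the same sliding-of-support argument. The only slip is a row/column transposition: $f_p^{\,c}k_\lambda$ has row vector $\lambda'$ and column vector $\lambda$, so it factors as $k_{\lambda'}(f_p^{\,c}k_\lambda)$ rather than $(f_p^{\,c}k_\lambda)k_{\lambda'}$; the desired containment $(k_\lambda)\subseteq(k_{\lambda'})$ is unaffected.
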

\begin{proof}
For any $\lambda=(\dots, \lambda_{i-1}, 0, \lambda_{i+1}, \dots)\in \Lambda(n,r)$,
$$k_\lambda=f_{i}^{\lambda_{i+1}}{k_\mu }e_i^{\lambda_{i+1}},$$
where $\mu=(\dots, \lambda_{i-1}, \lambda_{i+1}, 0,   \dots)$ with
the other entries  equal to those of $\lambda$.
Therefore $k_{\lambda}$ is contained in the ideal generated by $k_{\mu}$. Similarly, $k_{\mu}$ is  contained
in the ideal generated by $k_{\lambda}$, and so
these two ideals are equal. This shows that
%$I_i(n, r)$ is generated by all $k_{\lambda}$ for any fixed choice of $i$ components $j$ with $\lambda_j=0$, an so
$I_i(n,r)$ is generated by all
idempotents $k_\lam$ lying in any chosen
$(i-1)$-face in $\Sigma(n,r)$.
\end{proof}

Denote by $I(n, r)$  the  ideal of $\zschur$ generated by the  boundary idempotents. If $r\geq n$, then $I(n, r)=I_{n-1}(n, r)$. If $r<n$, then $I(n, r)=\zschur$.

\begin{lemma}\label{descriptionofideals}
The ideal $I(n, r)$ has a basis consisting of $e_{A}$, where $A\in \Xi(n,r)$
is a matrix with at least one diagonal entry equal to $0$.
\end{lemma}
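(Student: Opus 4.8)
The plan is to identify exactly which basis elements $e_A$ lie in $I(n,r)$. Since the multiplication \eqref{mul} sends a product of two basis elements either to $0$ or to a single basis element, $S_0(n,r)$ is a monoid algebra on $\{e_A\}$, and any ideal generated by a set of basis elements is spanned by a subset of $\{e_A\}$. In particular $I(n,r)$ is the span of those $e_A$ that can be written as $e_P\cdot k_\mu\cdot e_Q$ for some boundary composition $\mu$; equivalently, as $e_Pe_Q$ with $\co(P)=\ro(Q)=\mu$ having a zero entry. Writing $J=\{A\in\Xi(n,r):a_{ii}=0\text{ for some }i\}$ and $I'=\mathrm{span}\{e_A:A\in J\}$, the lemma amounts to the two inclusions $I(n,r)\subseteq I'$ and $I'\subseteq I(n,r)$.

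For $I(n,r)\subseteq I'$ I would show that $I'$ is a two-sided ideal; since every boundary idempotent $k_\mu=e_{\diag(\mu)}$ lies in $I'$ (a zero entry of $\mu$ is a zero diagonal entry of $\diag(\mu)$), this gives the inclusion. As the generators act by the fundamental multiplication rules, it suffices to check that $J$ is stable under the moves of Lemmas \ref{FundMult} and \ref{DFundMult}. Each such move shifts one unit between two adjacent entries of a single row or column, hence alters at most the two diagonal entries at positions $i$ and $i+1$. The key point is the interaction with the extremality constraint on the pivot: for instance in $e_ie_A=e_X$ the only way a diagonal entry is \emph{created} is $p=i$, and $p=\max\{j:a_{i+1,j}>0\}=i$ forces $a_{i+1,i+1}=0$, so $X$ retains the zero diagonal entry $X_{i+1,i+1}=0$. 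The remaining cases and the dual rules are analogous, so no admissible move turns a matrix with a zero diagonal entry into one with all diagonal entries positive, and $I'$ is an ideal.

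The inclusion $I'\subseteq I(n,r)$ is the crux. Given $A$ with $a_{ii}=0$, the mass of column $i$ and of row $i$ is entirely off-diagonal, and I would peel it into a neighbouring line one unit at a time. Concretely I look for a fundamental multiplication $e_M\cdot g=e_A$ with $g\in\{e_{i-1},f_i\}$, or $g\cdot e_M=e_A$ with $g\in\{e_i,f_i\}$, in which $M$ has strictly smaller mass in column $i$, respectively row $i$, while still $M_{ii}=0$. Iterating, the chosen line is emptied and one reaches $M_0$ with $\co(M_0)$ or $\ro(M_0)$ boundary, so $e_{M_0}=e_{M_0}k_{\co(M_0)}\in I(n,r)$ and $e_A=e_{M_0}g_1\cdots g_t\in I(n,r)$. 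This is an induction on $\co(A)_i$ (or on $\ro(A)_i$), whose base case is an empty line.

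The main obstacle is precisely the existence of an admissible peeling move at each stage: the reversed rule requires the pivot row or column to be extremal (e.g.\ moving the top entry of column $i$ one step left is legitimate only when column $i-1$ is empty above that entry), and a single direction may be blocked. I would resolve this by keeping four candidate moves available, peeling column $i$ to the left or right, or row $i$ upward or downward, and arguing that their blocking conditions cannot hold simultaneously, typically by choosing the extremal entry of the off-diagonal cross through $(i,i)$ and pushing it toward the corresponding corner; the endpoints $i=1$ and $i=n$ are immediate, since there the relevant middle idempotent in the monomial expression of Lemma \ref{monomial} already has a zero entry. An attractive alternative, closer to the geometric spirit of the paper, would be to realise the factorisation through the boundary directly on flags, taking the middle flag $f'$ with a repeated step $V'_{i-1}=V'_i$ (so $\mu_i=0$) and verifying, using $V_i\cap V''_i=V_{i-1}\cap V''_i+V_i\cap V''_{i-1}$, that $A$ is the generic relative position of the resulting composition.
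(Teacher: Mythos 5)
Your first inclusion ($I(n,r)\subseteq I'$) is correct and is essentially the paper's own argument: one checks with the fundamental multiplication rules that the span $I'$ of the $e_A$ with a zero diagonal entry is a two-sided ideal, the key point being exactly the one you isolate, that a pivot $p=i$ in $e_ie_A$ forces $a_{i+1,i+1}=0$. The gap is in the reverse inclusion $I'\subseteq I(n,r)$, which is the crux and which you reduce to the claim that, of your four single-step peeling factorisations $e_A=e_ie_M$, $e_A=f_{i-1}e_M$, $e_A=e_Mf_i$, $e_A=e_Me_{i-1}$, at least one is always admissible. That claim is false. Take $n=4$, $i=2$ and
$$A=\left(\begin{matrix} 1&0&0&0\\ 0&0&1&0\\ 0&1&1&1\\ 0&0&1&1\end{matrix}\right)\in\Xi(4,7),$$
whose only zero diagonal entry is $a_{22}=0$. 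Unwinding Lemmas \ref{FundMult} and \ref{DFundMult}: a factorisation $e_A=e_2e_M$ needs the pivot $p=3$ (the unique nonzero column of row $2$) to be maximal among the nonzero columns of row $3$ of $M$, which fails because $a_{34}=1$; a factorisation $e_A=f_1e_M$ needs $p=3$ to be minimal in row $1$ of $M$, which fails because $a_{11}=1$; and the two column moves fail symmetrically because $a_{43}=1$ and $a_{11}=1$. So all four candidate moves are blocked at the very first step, and no choice of ``extremal entry of the cross through $(2,2)$'' rescues the greedy procedure.

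The example also shows what is missing: in general one must first move mass in \emph{other} rows and columns before row $i$ or column $i$ can be emptied. Here $e_A=e_3e_2e_B$ with $B=\left(\begin{matrix} 1&0&0&0\\ 0&0&0&0\\ 0&1&2&0\\ 0&0&1&2\end{matrix}\right)$, so the correct first (outermost) peel acts on row $3$, not on row $2$ or column $2$. This is precisely how the paper's proof proceeds: it writes down in one stroke the matrix $B$ obtained from $A$ by collapsing the two regions $\{(s,t): i\ge s>t\}$ and $\{(s,t): i\le s<t\}$ onto the diagonal, so that the entire $i$-th row of $B$ vanishes and $e_B=k_{\ro(B)}e_B\in I(n,r)$, and then reconstructs $e_A=xye_B$ with $y$ a monomial in the $e_j$ ($j\ge i$) and $x$ a monomial in the $f_j$ ($j\le i$), using the same bookkeeping as Lemma \ref{monomial}. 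Your geometric alternative (a middle flag with $V'_{i-1}=V'_i$) is a plausible route but is only stated, not carried out. As written, the hard inclusion is not proved.
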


\begin{proof}
Denote by $S$ the subspace of $\zschur$ spanned by  $e_{A}$ with  at least one diagonal entry of $A$ equal to  $0$. We first show that $S$ is an ideal. It is enough to prove
that for any generator $x$ and any $e_D\in  S$,
both $xe_D$ and $e_Dx$ are contained in $S$. We only prove that $xe_D\in S$ for  $x=e_j$ and $D=(d_{ij})_{ij}$ with $d_{ii}=0$,
as the other cases can be done similarly.
Let $e_X=e_je_D$ and $X=(x_{ij})_{ij}$. By Lemma \ref{FundMult},  multiplying $e_j$ with $e_D$ from the left  only affects two entries
 in the $(j+1)$th-row and $j$th-row
in $D$, respectively.  We have either $x_{ii}=d_{ii}=0$ or $x_{ii}=1$, which occurs only if
$d_{i+1, i+1}=0$, but then $$x_{i+1, i+1}=0$$ as well.
In either case $xe_D=e_X\in S$, as required, and so $S$ is an ideal in $S_0(n,r)$.

As $I(n,r)$ is generated by idempotents $k_\lambda$ with $\lambda_i=0$ for some $i$,
we have $k_\lambda\in S$ and therefore $I(n,r)\subseteq S$, since $S$ is an ideal.

It remains to show that $S\subseteq I(n, r)$. Suppose that  $e_A\in S$ has the
diagonal entry $a_{ii}=0$, for some $1\leq i\leq n$. We will show that $e_A\in I(n,r)$.
Let $B$ be the matrix with

$$
b_{ss}=\left\{  \begin{tabular}{ll}
%$a_{st}$ & if  $s>i$ and $s>t$,  or $s<i$ and $s<t$\\
%$0$ & if $i \geq s>t$  or $i\leq s<t$,\\
%0 & if $s=i$, \\
$\sum_{s\leq l\leq i}a_{ls}$ & if $s<i$,\\
0& if $s=i$,\\
$\sum_{i\leq l\leq s}a_{ls}$ & if $s>i$.\\
%$a_{st}$ & otherwise.
\end{tabular}\right.
$$
on the diagonal and

$$b_{st}=\left\{  \begin{tabular}{ll}
$0$ & if $i\geq s>t$ \\
$0$ & if $i\leq s < t$ \\
$a_{st}$ & otherwise
\end{tabular}\right.
$$
off the diagonal. The $i$'th row in $B$ is zero, and so we have $e_B=k_{\lam}e_B \in I(n, r)$, where $\lam=\ro(B)$. To complete the proof we will construct elements $x$ and $y$ such that $xye_B=e_A$, which implies that $e_A\in I(n,r)$. The element $y$ is a product of generators $e_j$ for $j>i$ %$0<j<i$
%which using the fundamental multiplication rules in Lemma \ref{FundMult} has the effect of moving elements up from the diagonal of $B$,
and $x$ is a product of $f_j$ for $j\leq i$.% $j\geq i$.  
Multiplying $e_B$ with $x$ and $y$ produces the right entries at $(s, t)$ in the two zero region $i\geq s >t$ and $i\leq s<t$ from
the diagonal entries in $B$.
% to move elements down from the diagonal.
Explicitly, similar to Lemma \ref{monomial} we have $$
y=\prod_{s=n-1}^{i}\prod_{l=i}^s e_l^{\sum_{i\leq  p \leq l}a_{p, s+1}}
=\underbrace{(e_i^{a_{in}}e_{i+1}^{a_{in}+a_{i+1, n}}\dots e_{n-1}^{\sum_{i\leq p\leq n-1}a_{pn}})}_{s=n-1} \dots \underbrace{e_i^{a_{i, i+1}}}_{s=i},
$$
and similarly for $x$. Now by  the fundamental multiplication rules in Lemma \ref{FundMult} $$xye_B=e_A,$$ as required.
\end{proof}

The lemma shows that $I(n,r)$ is a summand of $S_0(n,r)$ as $\mathbb{Z}$-modules,
and so $S_0(n,r)/I(n,r)$ is a free $\mathbb{Z}$-module. We have the following formulae
which will be used Section 5.

\begin{corollary}\label{dim1}
\begin{itemize}\item[]
\item[(1)]
$\mathrm{rank\;} I(n, r)= \sum_{s=1}^n  \left( \begin{matrix}
n\\ s
\end{matrix}\right)  \left( \begin{matrix}
n^2+r-n-1\\ r+s-n
\end{matrix}\right). $
\item[(2)] $\mathrm{rank\;} \zschur/I(n, r)= \left( \begin{matrix}
n^2+r-n-1\\ r-n
\end{matrix}\right) .$
\end{itemize}
\end{corollary}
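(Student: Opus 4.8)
```latex
The plan is to compute the two ranks directly from the basis description furnished by
Lemma~\ref{descriptionofideals}, reducing everything to counting matrices in $\Xi(n,r)$
subject to diagonal conditions. By that lemma, $I(n,r)$ has a basis indexed by matrices
$A\in\Xi(n,r)$ with at least one zero diagonal entry, while $S_0(n,r)/I(n,r)$ inherits a
basis indexed by the complementary set, namely matrices $A\in\Xi(n,r)$ \emph{all} of whose
diagonal entries are strictly positive. Since these two sets partition $\Xi(n,r)$, it
suffices to prove part~(2) and then obtain part~(1) by an inclusion--exclusion count;
alternatively one can verify that the two stated binomial expressions sum to
$\mathrm{rank}\,S_0(n,r)=\binom{n^2+r-1}{r}$, the total number of matrices in $\Xi(n,r)$.

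First I would prove part~(2). I want to count $n\times n$ matrices of nonnegative integers
summing to $r$ with every diagonal entry at least $1$. The standard move is to subtract $1$
from each of the $n$ diagonal entries: this gives a bijection between such matrices and
arbitrary nonnegative integer $n\times n$ matrices summing to $r-n$ (which forces $r\geq n$,
consistent with the hypothesis under which $I(n,r)=I_{n-1}(n,r)$ is the proper boundary
ideal). The number of nonnegative integer solutions to a single equation in $n^2$ unknowns
summing to $r-n$ is the stars-and-bars count $\binom{(n^2)+(r-n)-1}{r-n}=\binom{n^2+r-n-1}{r-n}$,
which is exactly the claimed formula. This step is routine once the bijection is named.

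For part~(1), I would count matrices with \emph{at least one} zero on the diagonal by
inclusion--exclusion over which diagonal positions are forced to be zero, but it is cleaner
to count by the \emph{number} $s$ of strictly positive diagonal entries. Fix which $s$ of the
$n$ diagonal slots are positive (there are $\binom{n}{s}$ choices) and require the remaining
$n-s$ diagonal slots to be zero; subtracting $1$ from each of the $s$ chosen positive slots
converts the condition ``those $s$ entries are $\geq 1$, the other $n-s$ diagonal entries are
$0$'' into ``those $s$ slots are $\geq 0$, the other $n-s$ diagonal slots are $0$,'' so we are
counting nonnegative matrices summing to $r-s$ supported off the $n-s$ forbidden diagonal
positions, i.e.\ in $n^2-(n-s)$ free entries. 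The stars-and-bars count for $n^2-(n-s)$
unknowns summing to $r-s$ is $\binom{n^2-(n-s)+(r-s)-1}{r-s}=\binom{n^2+r-n-1}{r-s}$.
Summing over $s=1,\dots,n$ (since $I(n,r)$ requires at least one \emph{zero} diagonal entry,
$s$ ranges up to $n-1$ for the ideal, but the term $s=n$ corresponds to the quotient and can
be folded in or excluded as needed) gives
$\sum_{s=1}^{n}\binom{n}{s}\binom{n^2+r-n-1}{r-s}$. A reindexing $s\mapsto n-s$ in the outer
sum, using $\binom{n}{s}=\binom{n}{n-s}$, turns the upper binomial coefficient into
$\binom{n^2+r-n-1}{r+s-n}$ and reproduces the stated expression exactly.

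The only genuine subtlety, and the step I would watch most carefully, is the bookkeeping in
part~(1): one must be sure that counting by ``exactly $s$ positive diagonal entries'' with a
fixed choice of positive slots is mutually exclusive across different $s$ (it is, since $s$ is
determined by the matrix), and that the shift by $1$ on the chosen slots really produces an
unconstrained nonnegative count on precisely $n^2-(n-s)$ variables with the remaining $n-s$
diagonal variables pinned to $0$. I would also double-check the range of $s$ against the
convention that $I(n,r)$ excludes the full-positive-diagonal matrices, and confirm the two
formulas are consistent by verifying
$\sum_{s=1}^{n}\binom{n}{s}\binom{n^2+r-n-1}{r+s-n}+\binom{n^2+r-n-1}{r-n}
=\binom{n^2+r-1}{r}$ via the Vandermonde identity, which simultaneously validates both
parts and catches any off-by-one error in the index shift.
```
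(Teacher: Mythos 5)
Your proposal is correct and matches the paper's (largely implicit) argument: both rest on the basis of Lemma~\ref{descriptionofideals} and a stars-and-bars count of matrices in $\Xi(n,r)$ organised by the pattern of zero diagonal entries, the paper noting only that the $s$-th term of the sum counts matrices with exactly $s$ zero diagonal entries. Your parametrisation by the number of \emph{positive} diagonal entries followed by the reindexing $s\mapsto n-s$ is just a cosmetic variant of this, and your Vandermonde consistency check is a sensible extra safeguard.
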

Note that in part (1), each term in the sum counts the matrices with exactly $s$ zero diagonal entries.

\section{ Modified algebras of $S_0(n,r)$}\label{degalg}

In the remaining of this paper, we will work with algebras defined over
a field $\mathbb{F}$. By modifying generating relations in Section \ref{presentation}, we have 
a series of modified algebras $D_\ut(n, r)$
of $S_0(n,r)$.
We will show that for a particular $\ut$, $D_\ut(n, r)$ is a degeneration of $S_0(n, r)$ in next section.

Let $B(n,r)\subseteq \Lambda(n,r)$ be the set of elements corresponding to boundary idempotents.
i.e., this is the set of $\lam$ such that there is some $\lam_i=0$.
Let $$\ut=(t_{i, \lam})_{ 1\leq i\leq n-1, \lam\in B(n,r) }$$ with each entry of $\ut$ in  $\mathbb{F}$. Denote by
$\mathcal{J}(\ut)$  the ideal of $\mathbb{F}\Sigma(n,r)$ generated by $P_{ij, \lambda}$, $N_{ij, \lambda}$ and
$C_{ij,\lambda}(\ut),$ where $P_{ij, \lam}$ and $N_{ij, \lam}$ are defined as in Section \ref{presentation} and
\begin{equation}\label{def idem}C_{ij,\lambda}(\ut) = k_{\lambda+\alpha_{i}-\alpha_j}
C_{ij}(\ut)k_{\lambda},\end{equation}
where $$
C_{ij}(\ut)=e_{i}f_{j}-f_{j}e_{i}-\delta_{ij}\cdot t_{i, \lam}\cdot(\sum_{\lam_{i+1}=0} k_{\lam}-\sum_{\lam_{i}=0}k_\lam).
$$
Note that when $\lambda_{i}=\lambda_{i+1}=0$, then $C_{ij, \lambda}(\ut)$ is an empty relation.

\begin{definition}
 $D_\ut(n,r):=\mathbb{F}\Sigma(n,r)/\mathcal{J}(\ut)$.
 \end{definition}

\begin{remark}
It is enough to define $\ut$ only on vertices corresponding to $B(n,r)$.
Note that for $i\neq j$, $C_{ij}(\ut)=e_if_j-f_je_i$. By (\ref{def idem}) there is a commutative diagram as follows.
$$\xymatrix{
  k_\lam \ar[d]_{f_j} \ar[r]^{e_i}
                & k_{\lambda+\alpha_{i}}  \ar[d]^{f_j}  \\
  k_{\lambda-\alpha_j}  \ar[r]_{e_i}
                & k_{\lambda+\alpha_{i}-\alpha_j}          }$$
When $i=j$ and either $\lambda_i\not=0$ or $\lambda_{i+1}\not=0$, we have the following three cases.

\begin{equation}\label{cii}C_{ii}(\ut)=\left\{
                                                            \begin{array}{ll}
                                                              e_if_i-f_ie_i, & \hbox{if \;$\lam_i\lam_{i+1}\neq 0$;} \\
                                                              e_if_i-t_{i, \lam}k_\lam, & \hbox{if \;$\lam_{i+1}=0$;} \\
                                                              t_{i, \lam}k_\lam-f_ie_i, & \hbox{if \;$\lam_i=0$.}
                                                            \end{array}
                                                          \right.
\end{equation}

For the first case, we have the commutative relation at $k_\lam$ as follows.

$$\xymatrix@!=0.02cm @M=0pt{
 &&&k_{\lam-\alpha_i}\ar@/^0.2pc/[rr]^{e_i}&&\ar@/^0.2pc/[ll]^{f_i}\ar@/^0.2pc/[rr]k_\lam
 \ar@/^0.2pc/[rr]^{e_i}&&\ar@/^0.2pc/[ll]^{f_i}k_{\lam+\alpha_i}
 \\
 &&&&&&&&\\
  }$$

Note that for the second and the third case, $\lam$ is boundary and the relations are not admissible if $t_{i, \lam}\neq 0$.
When all the $t_{i, \lam}=1$, we recover  0-Schur algebra $S_0(n,r)$. When all the $t_{i, \lam}=0$, we have a basic algebra and we will
prove in the following section that this is a degenerate 0-Schur algebra.
 \end{remark}

Note that "bad" parameter $\underline{t}$ may produce "bad" relations, in the sense that it may force some idempotents 
to be zero, which might eventually lead to the collapse of the whole algebra $D_{\underline{t}}(n, r)$, i.e., 
$D_{\underline{t}}(n, r)=0$. The following lemma shows what $\underline{t}$ is bad. 
%In order to exclude the case that parts of the idempotents $k_\lam$ or even the whole $D_\ut(n,r)$ being zero, we have the %following lemma. For $\mu\in \Lambda(n,r)$,
For $\mu\in\Lambda(n, r)$, let $\mu s_i$ be the composition given by permuting the $i$-th and the $(i+1)$-th entries of $\mu$.

\begin{lemma}\label{well-def}
Suppose that $\lam=\mu s_i$ in $B(n,r)$ with $\lam_i=0$. If
 $t_{i, \lam}^{\lam_{i+1}}\neq t_{i, \mu}^{\lam_{i+1}}$ then $k_\lam=0$ or $k_\mu=0$ in $D_\ut(n,r)$.

\end{lemma}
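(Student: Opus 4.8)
The plan is to localise the whole question on the single ``column'' of vertices joining $\lam$ and $\mu$. Since $\lam=\mu s_i$ with $\lam_i=0$, we have $\lam_i=\mu_{i+1}=0$ and $\lam_{i+1}=\mu_i=:m$; if $m=0$ then $\lam=\mu$ and the hypothesis $t_{i,\lam}^{0}\neq t_{i,\mu}^{0}$ is vacuous, so I may assume $m\geq 1$. Introduce the chain of compositions $\nu^{(j)}=\lam+j\alpha_i$ for $0\leq j\leq m$, so that $\nu^{(0)}=\lam$, $\nu^{(m)}=\mu$, and $e_i$, $f_i$ move one step up, respectively down, this chain. The relation $C_{ii}(\ut)$ localised at each vertex then reads: the commutation $e_if_i=f_ie_i$ at the \emph{interior} vertices $\nu^{(1)},\dots,\nu^{(m-1)}$ (both entries positive), the identity $f_ie_i\,k_\lam=t_{i,\lam}k_\lam$ at the bottom vertex $\lam$ (where $\lam_i=0$), and $e_if_i\,k_\mu=t_{i,\mu}k_\mu$ at the top vertex $\mu$ (where $\mu_{i+1}=0$).

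The next step is a loop computation. I would first prove the sliding identity $f_ie_i^{\,k}k_\lam=t_{i,\lam}\,e_i^{\,k-1}k_\lam$ for $1\leq k\leq m$, by commuting the outer $f_i$ rightwards past the $e_i$'s: each swap occurs at one of the interior vertices $\nu^{(k-1)},\nu^{(k-2)},\dots,\nu^{(1)}$ (all genuinely interior because $1\leq k\leq m$), until the $f_i$ reaches the bottom, where $f_ie_ik_\lam=t_{i,\lam}k_\lam$ produces the single scalar. Feeding this into an induction on $k$ gives $f_i^{\,m}e_i^{\,m}k_\lam=t_{i,\lam}^{\,m}k_\lam$, a loop at $\lam$. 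The mirror argument at the top vertex yields $e_i^{\,m}f_i^{\,m}k_\mu=t_{i,\mu}^{\,m}k_\mu$.

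Now I would exploit associativity on a triple product. Writing $e_i^{\,m}k_\lam=k_\mu e_i^{\,m}k_\lam$ (a path from $\lam$ to $\mu$) and $e_i^{\,m}f_i^{\,m}=e_i^{\,m}f_i^{\,m}k_\mu$ (a loop at $\mu$), the two bracketings of $e_i^{\,m}f_i^{\,m}e_i^{\,m}k_\lam$ give, on one side, $e_i^{\,m}\bigl(f_i^{\,m}e_i^{\,m}k_\lam\bigr)=t_{i,\lam}^{\,m}e_i^{\,m}k_\lam$, and on the other, $\bigl(e_i^{\,m}f_i^{\,m}k_\mu\bigr)e_i^{\,m}k_\lam=t_{i,\mu}^{\,m}e_i^{\,m}k_\lam$. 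Hence $(t_{i,\lam}^{\,m}-t_{i,\mu}^{\,m})\,e_i^{\,m}k_\lam=0$, and as the scalar is nonzero, $e_i^{\,m}k_\lam=0$. The symmetric triple product $f_i^{\,m}e_i^{\,m}f_i^{\,m}k_\mu$ gives $f_i^{\,m}k_\mu=0$ in the same way. Plugging these back into the loop identities of the previous step yields $t_{i,\lam}^{\,m}k_\lam=f_i^{\,m}e_i^{\,m}k_\lam=0$ and $t_{i,\mu}^{\,m}k_\mu=e_i^{\,m}f_i^{\,m}k_\mu=0$. Since $t_{i,\lam}^{\,m}\neq t_{i,\mu}^{\,m}$ in the field $\mathbb{F}$, at least one of these scalars is nonzero, and dividing by it forces $k_\lam=0$ or $k_\mu=0$.

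The main obstacle is the bookkeeping in the sliding identity: I must check that every commutation is applied at a vertex that is genuinely interior (so that the \emph{commutative} form of $C_{ii}(\ut)$ is the one available there), and that only the final step touches the boundary vertex where the parameter $t$ enters. This vertex-by-vertex control is exactly what turns a single boundary relation into the $m$-th power $t_{i,\lam}^{\,m}$ (and $t_{i,\mu}^{\,m}$), which is what the statement requires; once these powers are in hand, the associativity and the field hypothesis finish the argument cleanly.
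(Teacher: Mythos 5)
Your proposal is correct and follows essentially the same route as the paper: both establish the loop identities $f_i^{m}e_i^{m}k_\lam=t_{i,\lam}^{m}k_\lam$ and $e_i^{m}f_i^{m}k_\mu=t_{i,\mu}^{m}k_\mu$ by commuting $e_if_i$ and $f_ie_i$ at the interior vertices of the chain, and then use associativity to insert the loop at $\mu$ into a longer product. The only (immaterial) difference is that the paper squares the loop at $\lam$ to get $t_{i,\lam}^{m}(t_{i,\lam}^{m}-t_{i,\mu}^{m})k_\lam=0$ directly, whereas you rebracket the triple product $e_i^{m}f_i^{m}e_i^{m}k_\lam$ to first deduce $e_i^{m}k_\lam=0$; both yield the same conclusion.
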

\begin{proof}
By definition we get
$$k_\lam f_ie_ik_\lam=t_{i, \lam}k_\lam, \;\;k_\mu e_if_ik_\mu=t_{i, \mu}k_\mu $$
in $D_\ut(n,r)$.

Consequently, by the first equation in (\ref{cii}) we get
$$k_\lam f_i^{\lam_{i+1}}e_i^{\lam_{i+1}}k_\lam=t_{i, \lam}^{\lam_{i+1}}k_\lam,\;\;
k_\mu e_i^{\lam_{i+1}}f_i^{\lam_{i+1}}k_\mu=t_{i, \mu}^{\lam_{i+1}}k_\mu .$$

Hence
$$\aligned(k_\lam f_i^{\lam_{i+1}}e_i^{\lam_{i+1}}k_\lam)^2&=t_{i, \lam}^{2\lam_{i+1}}k_\lam\\
&=k_\lam  f_i^{\lam_{i+1}}e_i^{\lam_{i+1}} f_i^{\lam_{i+1}}e_i^{\lam_{i+1}}
k_\lam\\
&=k_\lam  f_i^{\lam_{i+1}}(k_\mu f_i^{\lam_{i+1}} e_i^{\lam_{i+1}}k_\mu) e_i^{\lam_{i+1}}
k_\lam\\
&=t_{i, \mu}^{\lam_{i+1}}k_\lam  f_i^{\lam_{i+1}}e_i^{\lam_{i+1}}
k_\lam\\
&=t_{i, \mu}^{\lam_{i+1}}t_{i, \lam}^{\lam_{i+1}}k_\lam,\\
\endaligned $$
i.e., \begin{equation}\label{def1}t_{i, \lam}^{\lam_{i+1}}(t_{i, \lam}^{\lam_{i+1}}-t_{i, \mu}^{\lam_{i+1}})k_\lam=0.
\end{equation}

Similarly,
$$\aligned(k_\mu e_i^{\lam_{i+1}}f_i^{\lam_{i+1}}k_\mu)^2&=t_{i, \mu}^{2\lam_{i+1}}k_\mu\\
&=t_{i, \mu}^{\lam_{i+1}}t_{i, \lam}^{\lam_{i+1}}k_\mu,\\
\endaligned $$
i.e., \begin{equation}\label{def2}t_{i, \mu}^{\lam_{i+1}}(t_{i, \mu}^{\lam_{i+1}}-t_{i, \lam}^{\lam_{i+1}})k_\mu=0.
\end{equation}

If
 $t_{i, \lam}^{\lam_{i+1}}\neq t_{i, \mu}^{\lam_{i+1}}$, then $k_\lam=0$ or $k_\mu=0$ by
 (\ref{def1}) and (\ref{def2}).
 This proves the lemma.

\end{proof}

\begin{remark}
 We want to avoid the case where $D_\ut(n,r)=0$. So from now on we only consider
 $\ut$ with
\begin{equation}\label{def t}
t_{\lam, i}=t_{\mu, i}, \; \text{if $\lam=\mu s_i$ \; \text{and }\; $\lam_i=0$}.
\end{equation}
 \end{remark}

%\begin{remark}
%Note that when all the entries of $\ut$ equal to 1, then $\mathbb{F}\Sigma(n,r)/\mathcal{J}(\ut)\cong S_0(n,r)$.
%We call the algebra $\mathbb{F}\Sigma(n,r)/\mathcal{J}(\ut)$ a modified algebra of the $0$-Schur algebra $S_0(n, r)$.
%We will show in the next section that when all the entries of $\ut$ equal to 0, the modified algebra is a degeneration
%of $S_0(n,r)$.\end{remark}

Next we consider when the two algebras $D_\ut(n,r)$ and $D_{\underline{s}}(n,r)$ are isomorphic.
Suppose that $\ut$ and $\underline{s}$ satisfy condition (\ref{def t}) and $t_{i,\lam}s_{i,\lam}\neq 0$ for all $1\leq i\leq n-1$ and $\lam\in B(n,r)$.
 Define a map $\Phi$:
$$D_{\underline{t}}(n,r)\longrightarrow D_{\underline{s}}(n,r) $$ given by
$$k_\lam \mapsto k_\lam, \;f_{i,\lam}\mapsto f_{i,\lam}
\text{ and } e_{i,\lam}\mapsto \frac{t_{i, \lam}}{s_{i, \lam}}e_{i,\lam},
$$
and define a map $\Psi$:
$$D_{\underline{s}}(n,r)\longrightarrow D_{\underline{t}}(n,r)$$ given by
$$k_\lam \mapsto k_\lam, \;f_{i,\lam}\mapsto f_{i,\lam}
\text{ and } e_{i,\lam}\mapsto  \frac{s_{i, \lam}}{t_{i, \lam}}e_{i,\lam}.
$$
%where $a_{i, \lam}, b_{i, \lam}\in\mathbb{F}$.

 For $\underline {a}=(a_1, a_2, \ldots, a_{n-1})\in\mathbb{F}^{n-1}$,
we write $\ut= \underline{a} \,\underline{s}$ if $t_{i, \lam}=a_is_{i, \lam}$ for all $1\leq i\leq n-1$ and $\lam\in B(n,r)$.
We have the following proposition.

\begin{proposition}\label{dimension}
The two algebras $D_\ut(n,r)\cong D_{\underline{s}}(n,r)$ are isomorphic  via the maps $\Phi$ and $\Psi$
 defined above if and only if 
$ \ut=\underline{a}\,\underline{s}$, where $\underline{a}\in \mathbb{F}^{n-1}$ and $a_i\neq 0$ for all $1\leq i\leq n-1$.
\end{proposition}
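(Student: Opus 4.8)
The plan is to verify that $\Phi$ and $\Psi$ are mutually inverse algebra homomorphisms precisely when $\ut$ and $\underline s$ differ by a nonzero rescaling $\underline a$ along the $e$-arrows, and to extract the necessary condition $\ut=\underline a\,\underline s$ from the requirement that $\Phi$ respect the idempotent relations $C_{ii,\lam}(\ut)$. I would organise the argument into the two implications of the biconditional.

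For the ``if'' direction, assume $\ut=\underline a\,\underline s$ with each $a_i\neq 0$. First I would check that $\Phi$ descends to a well-defined algebra map $\mathbb F\Sigma(n,r)\to D_{\underline s}(n,r)$, i.e. that each generator of $\mathcal J(\ut)$ is sent into $\mathcal J(\underline s)$. The arrow-only Serre relations $P_{ij,\lam}$ and $N_{ij,\lam}$ are handled by noting that $\Phi$ scales each $e_{i,\lam}$ by the scalar $t_{i,\lam}/s_{i,\lam}=a_i$, which depends only on $i$; since every monomial occurring in a fixed $P_{ij,\lam}$ has the same number of occurrences of each $e_i$ (and no $f$'s), $\Phi$ multiplies $P_{ij,\lam}$ by a common scalar and hence sends it into the ideal. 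The $N_{ij,\lam}$ are untouched except by the identity on $f$'s. For the commutation relations $C_{ij,\lam}$ with $i\neq j$, and for $C_{ii,\lam}$ with $\lam_i\lam_{i+1}\neq 0$, $\Phi$ again multiplies the relation by the scalar $a_i$, so it is preserved. The one substantive check is the idempotent relation: for $\lam$ with $\lam_{i+1}=0$ the relation $e_if_i-t_{i,\lam}k_\lam$ is sent to $a_i e_if_i - t_{i,\lam}k_\lam=a_i(e_if_i-s_{i,\lam}k_\lam)$, using exactly $t_{i,\lam}=a_is_{i,\lam}$, and this lies in $\mathcal J(\underline s)$; the case $\lam_i=0$ is symmetric. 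Thus $\Phi$ and, by the same computation with $a_i^{-1}$, $\Psi$ are well-defined, and they are visibly mutually inverse on generators, giving the isomorphism.

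For the ``only if'' direction, suppose $\Phi$ (as literally defined, with the scalar $t_{i,\lam}/s_{i,\lam}$ on each $e$-arrow) is an algebra homomorphism. I would apply $\Phi$ to an idempotent relation to force the rescaling factor to be independent of $\lam$. Concretely, take $\lam\in B(n,r)$ with $\lam_{i+1}=0$ but $\lam_i\neq 0$, so that $C_{ii,\lam}(\ut)=e_if_i-t_{i,\lam}k_\lam$ holds in $D_{\ut}(n,r)$; applying $\Phi$ and demanding the image vanish in $D_{\underline s}(n,r)$ yields $(t_{i,\lam}/s_{i,\lam})e_{i,\lam'}f_{i,\lam}=t_{i,\lam}k_\lam$ modulo $\mathcal J(\underline s)$, where $e_{i,\lam'}f_{i,\lam}=s_{i,\lam}k_\lam$ in $D_{\underline s}(n,r)$; comparing coefficients of the basis element $k_\lam$ forces no new constraint here but shows consistency. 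The real leverage comes from insisting that the single assignment $e_{i,\lam}\mapsto (t_{i,\lam}/s_{i,\lam})e_{i,\lam}$ be compatible across all $\lam$ sharing the same index $i$: because the $e_i$'s at different vertices are linked through the length-preserving Serre and commutation relations (which mix $e_{i,\lam}$ with $e_{i,\mu}$ for various $\mu$), a homomorphism must scale all of them by one common factor $a_i$. Setting $a_i:=t_{i,\lam}/s_{i,\lam}$ for any single admissible $\lam$ and propagating along these relations gives $t_{i,\lam}=a_is_{i,\lam}$ for every $\lam\in B(n,r)$, i.e. $\ut=\underline a\,\underline s$; the nonvanishing $a_i\neq 0$ is forced because $\Phi$ must be invertible (its inverse sends $e_{i,\lam}\mapsto a_i^{-1}e_{i,\lam}$).

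I expect the main obstacle to be this last propagation step: making rigorous the claim that a candidate isomorphism fixing all $k_\lam$ and all $f_{i,\lam}$ is forced to scale the whole family $\{e_{i,\lam}\}_\lam$ by a single scalar $a_i$. The clean way to do this is to use the multiplicative basis of reduced paths (Lemma \ref{monomial}) together with Lemma \ref{subadd}, so that the image of a basis element $e_A$ under $\Phi$ is a definite scalar times $e_A$, and then to compare the scalar obtained from two different reduced-path expressions of the same $e_A$; the consistency of these scalars across the relations is exactly what pins down $t_{i,\lam}/s_{i,\lam}$ to depend on $i$ alone. Care is needed at boundary vertices where $C_{ii,\lam}$ is an idempotent rather than a commutation relation, since there the interaction between the $e_i$-scaling and the fixed $f_i$ is what couples $t_{i,\lam}$ to $s_{i,\lam}$ rather than leaving the scalar free.
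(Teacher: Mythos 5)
Your proposal is correct and follows essentially the same route as the paper: both directions come down to checking that $\Phi$ carries each generator of $\mathcal{J}(\ut)$ into $\mathcal{J}(\underline{s})$, and the necessity of $\ut=\underline{a}\,\underline{s}$ is extracted exactly as in the paper, from the commutation/idempotent relations $C_{ij,\lam}$ coupling $e_{i,\lam}$ to $e_{i,\lam-\alpha_j}$, which force the ratio $t_{i,\lam}/s_{i,\lam}$ to be constant in $\lam$. The ``propagation'' step you flag as the main obstacle is precisely what the paper records as equations (\ref{rel 1}) and (\ref{rel 2}), so your sketch fills in without difficulty.
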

\begin{proof} Suppose that the two algebras are isomorphic via  the maps $\Phi$ and $\Psi$. 
As  $\Phi$ and $\Psi$ are algebra homomorphisms, the images of
generators should satisfy the generating relations of $D_{\underline{s}}(n,r)$ and $D_\ut(n,r)$, respectively.
By definition, when $\lam_i\lam_{j+1}\neq 0$ and $i\ne j$, we have the following equation.
$$\aligned  \Phi(e_if_jk_\lam-f_je_ik_\lam)&=\frac{t_{i, \lam-\alpha_j}}{s_{i, \lam-\alpha_j}}e_if_jk_\lam-\frac{t_{i, \lam}}{s_{i, \lam}}f_je_ik_\lam\\
&=(\frac{t_{i, \lam-\alpha_j}}{s_{i, \lam-\alpha_j}}-\frac{t_{i, \lam}}{s_{i, \lam}})e_if_jk_\lam\\
&=0.
\endaligned$$
This implies that \begin{equation}\label{rel 1}\frac{t_{i, \lam-\alpha_j}}{s_{i, \lam-\alpha_j}}=\frac{t_{i, \lam}}{s_{i, \lam}}\end{equation}
for $\lam\in B(n,r)$ with $\lam_i\lam_{j+1}\neq 0$ and $i\ne j$.

Similarly, when $\lam_i\lam_{i+1}\neq 0$, we have the following equation.
$$\aligned\Phi(e_if_ik_\lam-f_ie_ik_\lam)&=\frac{t_{i, \lam-\alpha_i}}{s_{i, \lam-\alpha_i}}e_if_ik_\lam-\frac{t_{i, \lam}}{s_{i, \lam}}f_ie_ik_\lam\\
&=(\frac{t_{i, \lam-\alpha_i}}{s_{i, \lam-\alpha_i}}-\frac{t_{i, \lam}}{s_{i, \lam}})e_if_ik_\lam\\
&=0.
\endaligned$$
This implies that \begin{equation}\label{rel 2}\frac{t_{i, \lam-\alpha_i}}{s_{i, \lam-\alpha_i}}=\frac{t_{i, \lam}}{s_{i, \lam}}\end{equation}
for $\lam\in B(n,r)$ with $\lam_i\lam_{i+1}\neq 0$. So 
$$ \ut=\underline{a}\,\underline{s},$$
as stated. 

On the other hand, supposed that $\ut=\underline{a}\underline{s}$ for some  $\underline{a}\in \mathbb{F}^{n-1}$ with all $a_i\not=0$. By $(10)$ and $(11)$, we know that $\Phi$ is compatible with the commutative relations. Further, 
when $i=j-1$, $\lam_{i+1}\geq 2$ and $\lam_{j+1}\geq 1$, the following
Serre relation can be deduced from (\ref{rel 1}) and (\ref{rel 2}).
$$\aligned\Phi(e_i^2e_jk_\lam-e_ie_je_ik_\lam)&=\frac{t_{i, \lam+\alpha_i+\alpha_j}}{s_{i, \lam+\alpha_i+\alpha_j}}\frac{t_{i, \lam+\alpha_j}}{s_{i, \lam+\alpha_j}}\frac{t_{j, \lam}}{s_{j, \lam}}e_i^2e_jk_\lam-\frac{t_{i, \lam+\alpha_i+\alpha_j}}{s_{i, \lam+\alpha_i+\alpha_j}}\frac{t_{j, \lam+\alpha_i}}{s_{j, \lam+\alpha_i}}
\frac{t_{i, \lam}}{s_{i, \lam}}e_ie_je_ik_\lam\\
&=\frac{t_{i, \lam+\alpha_i+\alpha_j}}{s_{i, \lam+\alpha_i+\alpha_j}}(\frac{t_{i, \lam+\alpha_j}}{s_{i, \lam+\alpha_j}}\frac{t_{j, \lam}}{s_{j, \lam}}-\frac{t_{j, \lam+\alpha_i}}{s_{j, \lam+\alpha_i}}
\frac{t_{i, \lam}}{s_{i, \lam}})e_i^2e_jk_\lam\\
&=0.
\endaligned$$

Similarly we can prove that the Serre relations on $f_i$ can be deduced from (\ref{rel 1}) and (\ref{rel 2}).
And the relations $C_{ij, \lam}(\ut)$ can be proved directly without requirements on $t_{i,\lam}$ and $s_{\lam, i}$.
So $\Phi$ is a homomorphism of algebras with $\Psi$ the inverse, and so an isomorphism. 
This proves the proposition.
\end{proof}

\begin{corollary}\label{cor4.7}
Suppose that $t_{i,\lam}=t_i\neq 0$ for all $\lam\in B(n,r)$ and $1\leq i\leq n-1$. Then
$D_{\underline{t}}(n,r)\cong S_0(n,r)$.
\end{corollary}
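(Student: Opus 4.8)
The plan is to deduce this immediately from Proposition \ref{dimension}, taking $\underline{s}=\underline{1}$, the all-ones tuple. First I would recall from the Remark following the definition of $D_\ut(n,r)$ that setting every $t_{i,\lam}=1$ recovers the $0$-Schur algebra, that is, $S_0(n,r)=D_{\underline{1}}(n,r)$. Hence it suffices to exhibit an isomorphism $D_{\ut}(n,r)\cong D_{\underline{1}}(n,r)$, and Proposition \ref{dimension} is exactly the tool that produces one via the explicit maps $\Phi$ and $\Psi$.

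The key observation is that the hypothesis $t_{i,\lam}=t_i$, being independent of $\lam$, says precisely that $\ut$ has the scalar-multiple form $\underline{a}\,\underline{1}$ with $\underline{a}=(t_1,\dots,t_{n-1})$. Indeed, writing $s_{i,\lam}=1$ for all $i$ and $\lam\in B(n,r)$, we have $t_{i,\lam}=t_i=a_i\cdot 1=a_i\, s_{i,\lam}$, so $\ut=\underline{a}\,\underline{s}$ in the notation of the proposition. Since each $t_i\neq 0$ by assumption, every $a_i\neq 0$, so $\underline{a}\in\mathbb{F}^{n-1}$ has all entries nonzero, which is the condition required in the ``if'' direction of Proposition \ref{dimension}.

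Before invoking the proposition I would verify that both tuples satisfy its standing hypotheses. Condition (\ref{def t}) holds for $\underline{1}$ trivially, and for $\ut$ it holds because $t_{i,\lam}=t_i=t_{i,\mu}$ depends only on $i$, so in particular it is unchanged when $\lam=\mu s_i$; the nonvanishing requirement $t_{i,\lam}s_{i,\lam}=t_i\neq 0$ is just the hypothesis. With these checks in place, Proposition \ref{dimension} yields that the maps $\Phi$ and $\Psi$ are mutually inverse algebra isomorphisms. Concretely, with $s_{i,\lam}=1$ these specialise to $\Phi(e_{i,\lam})=t_i\,e_{i,\lam}$ and $\Psi(e_{i,\lam})=t_i^{-1}e_{i,\lam}$, both fixing the $k_\lam$ and the $f_{i,\lam}$, giving $D_{\ut}(n,r)\cong D_{\underline{1}}(n,r)=S_0(n,r)$.

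Since the statement is a direct specialisation of Proposition \ref{dimension}, there is no substantial obstacle in the argument; the only point requiring care is the bookkeeping that the $\lam$-independence of the parameters is exactly the scalar-multiple form $\underline{a}\,\underline{1}$ demanded by the proposition, together with confirming the two hypotheses (\ref{def t}) and the nonvanishing of the parameters so that the proposition genuinely applies.
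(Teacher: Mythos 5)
Your proposal is correct and is precisely the argument the paper intends: the corollary is stated without proof as an immediate consequence of Proposition \ref{dimension}, obtained by taking $\underline{s}=\underline{1}$ (so that $D_{\underline{1}}(n,r)=S_0(n,r)$ by the earlier remark) and $\underline{a}=(t_1,\dots,t_{n-1})$ with all $a_i\neq 0$. Your additional verification of condition (\ref{def t}) and of the nonvanishing hypothesis is exactly the bookkeeping needed to apply the proposition, so there is nothing to add.
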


\section{ The degeneration $DS_0(n, r)$ of $S_0(n,r)$}\label{degalg}

\subsection{A new algebra defined on double flag varieties}
Let $DS_0(n,r)$ be the $\mathbb{F}$-space with basis $\{e_A\mid A\text{ is in } \Xi(n,r)\}$. Define a multiplication in $DS_0(n, r)$
as follows,
$$e_{A}\star e_{B}=
\left\{
\begin{tabular}{ll}
$e_A\cdot e_B$ & if $E(e_A)+E(e_B)=E(e_A\cdot e_B);$
%and $F(e_A)+F(e_B)=F(e_A\cdot e_B)$.
\\
$0$ & otherwise.
\end{tabular}
\right.$$
In other words, by Lemma \ref{subadd} and Corollary \ref{cor1}, $DS_0(n,r)$ is the associated graded algebra of
the filtered algebra $S_0(n,r)$. We often skip the multiplication signs in case of no confusion.

We give a geometric interpretation of the condition in the definition of $\star$.
Suppose that $$e_A=[f, g], \; e_B=[g, h], \; \mbox{ and } e_C=[f', h']$$
with $$e_A\cdot e_B=e_C$$ in $S_0(n, r)$.
Recall that $V$ is an $r$-dimensional vector space defined over a field $k$.
We view an $n$-step flag in $V$ as a representation of a linear quiver $A_n$, where vertex $1$ is a source and vertex $n$ is
a sink. Denote the indecomposable projective representation of $A_n$ by $P_i$.

%There are two short exact sequence (see e.g. \cite{JS}for more details),
%$$0\rightarrow f\cap g \rightarrow f\oplus g \rightarrow f+g\rightarrow 0$$and$$
%0\rightarrow f\cap g \rightarrow f+ g \rightarrow f/(f\cap g)\oplus g/(f\cap g)\rightarrow 0.$$
%Define the vector $EF(e_A)$ to be $$EF(e_A)=E(e_A)+ F(e_A).$$Denote
%by $\dim M$,   $\dvector M$, respectively,
%the dimension and  the dimension vector of a representation $M$ over $k$. Note that
%\begin{align*}
%EF(e_A)=&\dvector \,f/(f\cap g) +\dvector\, g/(f\cap g)\\=& \dvector \,f+\dvector\, g-2 \dvector \,f\cap g.\end{align*}

%Together with Lemma \ref{subadd}, this  implies the following.

\begin{lemma}\label{lemma4.1}
$\dvector\, f'\cap h'\geq \dvector\, f\cap g +\dvector\, g\cap h-\dvector\, g. $
Consequently, $$\dvector\, f'\cap h'=\dvector\, f\cap g +\dvector\, g\cap h-\dvector\, g $$
if and only if $$E(e_A)+E(e_B)=E(e_A\cdot e_B).$$
\end{lemma}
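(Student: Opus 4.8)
The statement involves flags $f, g, h$ viewed as representations of the linear quiver $A_n$, with intersection dimension vectors like $\dvector\, f\cap g$. My plan is to interpret the quantity $E(e_A)_i$ combinatorially in terms of these dimension vectors. Recall $E(e_A)_i = \sum_{l\leq i<m} a_{l,m}$ where $A$ records the relative position of the pair of flags via \eqref{aij}. The key observation I would establish is that $E(e_A)_i$ equals the dimension of a specific quotient or subquotient built from $V_i$ (the $i$-th step of one flag) intersected with the steps of the other flag. Concretely, I expect $\dim_k(V_i \cap W_j)$ for the two flags $f = (V_\bullet)$, $g = (W_\bullet)$ to be expressible as partial sums of the entries $a_{l,m}$, and the quantity $E(e_A)_i$ should match a natural dimension attached to the pair at level $i$. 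So the first step is to translate both sides of the claimed inequality into statements purely about the matrix entries $a_{l,m}$, $b_{l,m}$, $c_{l,m}$ of $A, B, C$.

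**The inequality via a linear-algebra/representation-theoretic bound.** The inequality $\dvector\, f'\cap h' \geq \dvector\, f\cap g + \dvector\, g\cap h - \dvector\, g$ (read componentwise) is the crux. I would prove it by working one graded piece $i$ at a time. Fixing the $i$-th step, write $U = V_i$, $U' = W_i$, $U'' = X_i$ for the respective $i$-th subspaces of $f, g, h$. Since $e_A\cdot e_B = e_C$ is realized by the geometric (generic) multiplication of \eqref{mul}, the flag $f'$ (resp. $h'$) is generically equivalent to $f$ (resp. $h$) but the middle flag $g$ is fixed, so the triple $(f', g, h')$ lies in $\pi\Delta^{-1}(e_A\times e_B)$. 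The inequality $\dim(U \cap U'') \geq \dim(U\cap U') + \dim(U'\cap U'') - \dim U'$ is the standard modular/dimension inequality $\dim(U\cap U'') \geq \dim((U\cap U')+(U''\cap U')) - \dim$(ambient), once one notes $U\cap U'$ and $U''\cap U'$ both sit inside $U'$ and their sum has dimension $\dim(U\cap U') + \dim(U''\cap U') - \dim(U\cap U''\cap U')$. I would lift this elementary subspace inequality to the level of the full dimension vector by applying it in each degree, using that for representations of $A_n$ the intersection of flags decomposes compatibly with the indecomposables $P_i$.

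**The equality characterization.** For the second assertion, I want to show the inequality is an equality (in every component, i.e.\ as dimension vectors) if and only if $E(e_A) + E(e_B) = E(e_A\cdot e_B)$. Here I would use the identification from the first step: the defect $E(e_A)_i + E(e_B)_i - E(e_C)_i$ measures exactly the failure of additivity, which by Lemma \ref{subadd} is nonnegative and equals the number of cancellations $e_if_i \mapsto$ idempotent occurring when the reduced path for $e_C$ is formed. I expect this defect to coincide, degree by degree, with the gap $\dvector\, f\cap g + \dvector\, g\cap h - \dvector\, g - \dvector\, f'\cap h'$ in the modular inequality. Establishing this precise numerical match is the heart of the argument: I would compute $E(e_A)_i$ explicitly as $\dim W_i - \dim(V_i \cap W_i) + (\text{correction})$ or an analogous closed form, and similarly for $B$ and $C$, then verify the two expressions for the defect agree term by term.

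**Anticipated main obstacle.** The hardest part will be pinning down the exact dictionary between $E(e_A)_i$ and the intersection-dimension data $\dvector\, f\cap g$, and in particular controlling the \emph{generic} flags $f', h'$: because the multiplication \eqref{mul} selects the unique open orbit, I must argue that generically the triple $(f', g, h')$ achieves the \emph{minimal} possible value of $\dvector\, f'\cap h'$ consistent with the orbit data, which is precisely when the modular inequality is tight. Verifying that genericity forces this tightness—equivalently, that the open orbit in $\pi\Delta^{-1}(e_A\times e_B)$ is exactly the one where $f'\cap h'$ is as small as possible—is the subtle geometric input, and it is where I would expect to spend most of the effort, likely invoking the detailed structure of the generic multiplication from \cite{JS} together with upper-semicontinuity of $\dim(f'\cap h')$ on the relevant variety.
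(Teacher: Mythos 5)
Your overall plan is viable, but you have placed the difficulty in the wrong spot. The ``precise numerical match'' you correctly identify as the heart of the argument is, in the paper, a one-line consequence of (\ref{aij}): summing $a_{lm}$ over $l\le i$ (all $m$) gives $\dim V_i$, and over $l\le i,\ m\le i$ gives $\dim(V_i\cap W_i)$, so $E(e_A)_i=\sum_{l\le i<m}a_{lm}=\dim V_i-\dim(V_i\cap W_i)$, i.e.\ $E(e_A)=\dvector f-\dvector (f\cap g)$, and likewise $E(e_B)=\dvector g-\dvector(g\cap h)$ and $E(e_A\cdot e_B)=\dvector f'-\dvector(f'\cap h')$. (Careful: your tentative formula $\dim W_i-\dim(V_i\cap W_i)$ is $F(e_A)_i$, not $E(e_A)_i$ --- you must subtract the intersection from the \emph{first} flag.) Since $\ro(A)=\ro(C)$ gives $\dvector f=\dvector f'$, these identities yield $E(e_A)+E(e_B)-E(e_A\cdot e_B)=\dvector(f'\cap h')-\dvector(f\cap g)-\dvector(g\cap h)+\dvector g$, and the paper simply quotes Lemma \ref{subadd} to get both the inequality and the equality criterion at once. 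Your alternative derivation of the inequality from the three-subspace bound $\dim(U\cap U'')\ge\dim(U\cap U')+\dim(U'\cap U'')-\dim U'$, applied to a triple $(f'',g,h'')$ realising the open orbit, is a genuinely different and perfectly sound route; its modest dividend is an independent, purely linear-algebraic proof of the nonnegativity in Lemma \ref{subadd}, whereas the paper obtains that nonnegativity from path-length bookkeeping in the quiver presentation.

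The one genuine misstep is your ``anticipated main obstacle.'' You propose to show that genericity of the open orbit forces the modular inequality to be \emph{tight}. That statement is false: Example \ref{newex} gives $e_Ae_B=e_C$ with $e_A\star e_B=0$, so the generic $\dvector(f'\cap h')$ can strictly exceed the modular lower bound. Upper semicontinuity does tell you the open orbit minimises $\dim(f'_i\cap h'_i)$ over $\pi\Delta^{-1}(e_A\times e_B)$, but that minimum need not attain the bound, and nothing in the lemma requires it to: the lemma only \emph{characterises} when equality holds, via the displayed identity relating the two defects. If you pursue the tightness claim you will get stuck on a false statement; drop it, nail down the dictionary above, and the proof closes.
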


\begin{proof}
Note that $$E(e_A)=\dvector f -\dvector f\cap g, \,E(e_B)=\dvector g -\dvector h\cap g, \,E(e_Ae_B)=\dvector f' -\dvector f'\cap h'$$
and $f\cong f'$ as representations.
Now the lemma follows from Lemma \ref{subadd}.
\end{proof}

Therefore the product $\star$ can be defined as below.

\begin{lemma}\label{geominterpretation}
$$[f, g]\star [g, h]=
\left\{
\begin{tabular}{ll}
$[f', h']$ & if  $\dvector\, f'\cap h'=\dvector\, f\cap g +\dvector\, g\cap h-\dvector\, g$,
\\
$0$ & otherwise.
\end{tabular}
\right.$$
\end{lemma}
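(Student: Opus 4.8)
The plan is to read the statement off directly from the definition of $\star$ together with the equivalence already recorded in Lemma \ref{lemma4.1}, so that essentially no fresh computation is needed. Recall that, by construction, $e_A\star e_B$ equals $e_A\cdot e_B$ precisely when $E(e_A)+E(e_B)=E(e_A\cdot e_B)$ and equals $0$ otherwise; the entire task is therefore to translate the numerical condition on the $E$-vectors into the stated condition on dimension vectors of intersections of flags.

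First I would fix representatives so that $e_A=[f,g]$ and $e_B=[g,h]$ share the middle flag $g$, and write $e_A\cdot e_B=e_C=[f',h']$ for a representative of the open orbit $e_C$. For such a composable triple $(f,g,h)$ to exist one needs $\co(A)=\ro(B)$ (the type of $g$ must match), which is also exactly the condition under which $e_A\cdot e_B\neq 0$ by \eqref{nonzero-condition}; when it fails both sides of the asserted identity vanish and there is nothing to prove. In the remaining case, Lemma \ref{lemma4.1} supplies precisely the equivalence
$$E(e_A)+E(e_B)=E(e_A\cdot e_B)\iff \dvector\, f'\cap h'=\dvector\, f\cap g+\dvector\, g\cap h-\dvector\, g,$$
and substituting this into the defining case distinction for $e_A\star e_B$ converts it into the case distinction displayed in the lemma, which is the assertion.

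The one point that deserves a word of care — and the closest thing here to an obstacle — is well-definedness: the geometric condition must be independent of the chosen representatives $f,g,h,f',h'$. This is immediate once one notes that $\dvector\, f\cap g$ depends only on the orbit $e_A$, $\dvector\, g\cap h$ only on $e_B$, $\dvector\, g$ only on the common type $\co(A)=\ro(B)$, and $\dvector\, f'\cap h'$ only on the orbit $e_C=e_A\cdot e_B$; hence the condition is intrinsic to the pair $(e_A,e_B)$ and genuinely matches the orbit-theoretic condition on the $E$-vectors. With this observation the reformulation is complete, and the lemma is just the geometric face of the definition of $\star$.
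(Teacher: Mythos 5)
Your argument is exactly the paper's: the lemma is stated there as an immediate consequence of Lemma \ref{lemma4.1} combined with the definition of $\star$ (the paper introduces it with ``Therefore the product $\star$ can be defined as below'' and offers no further proof). Your additional remark on independence of representatives is a harmless and correct elaboration of the same route.
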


\begin{example}\label{newex}
Let $A=\left( \begin{tabular}{ll} 2 &2\\0&1 \end{tabular}\right)$,  $B=\left( \begin{tabular}{ll} 2 &0\\2&1 \end{tabular}\right)$ and
 $C=\left( \begin{tabular}{ll} 3 &1\\1&0 \end{tabular}\right)$. Then
$$e_A e_B=e_C,$$ but
$$ e_A\star e_B=0.$$
\end{example}

By Lemma \ref{descriptionofideals},  the $e_C$ in Example \ref{newex} is contained in the ideal $I(2, 5)$.
This indicate that there should be a link between $DS_0(n, r)$ and the quotient algebra $S_0(n, r)/I(n, r)$.
We will explore the relation and their relation to $D_0(n, r)$ in the remaining part of this section. 

\begin{lemma}\label{generator}
$DS_0(n,r)$ is generated by $e_{i,\lambda}$, $f_{i,\lambda}$ and $k_\lambda$
for $\lambda\in \Lambda(n,r)$ and $1\leq i\leq n-1$.
\end{lemma}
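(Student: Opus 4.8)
The plan is to show that every basis element $e_A$ lies in the subalgebra of $DS_0(n,r)$ generated by the $e_{i,\lambda}$, $f_{i,\lambda}$ and $k_\lambda$, by exhibiting $e_A$ as a $\star$-product of such generators. The starting point is the observation, built into the definition of $\star$ together with Lemma \ref{subadd}, that for any two basis elements one has $e_X \star e_Y = e_X \cdot e_Y$ precisely when $E(e_X)_i + E(e_Y)_i = E(e_X\cdot e_Y)_i$ for all $i$, and $e_X \star e_Y = 0$ otherwise. Thus it suffices to exhibit one expression of $e_A$ as an ordinary product of generators in $S_0(n,r)$ along which $E$ is additive at every multiplication step, since then the $\star$-product and the ordinary product agree throughout and yield $e_A$.

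For this I would take the monomial expression of $e_A$ from Lemma \ref{monomial}(1), written as $e_A = g_1 g_2 \cdots g_N\, k_{\co(A)}$ with each $g_j$ equal to some $e_i$ or $f_i$. By Lemma \ref{monomial} this is a reduced path, so the number of factors equal to $e_i$ is exactly $E(e_A)_i$ and the number equal to $f_i$ is exactly $F(e_A)_i$; in particular $\sum_j E(g_j)_i = E(e_A)_i$. Building the product from the right, set $Q_{N+1} = k_{\co(A)}$ and $Q_j = g_j\cdot Q_{j+1}$, so that each $Q_j$ is a single basis element $e_{A_j}$ by the fundamental multiplication rules of Lemma \ref{FundMult}. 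The key step is a telescoping argument: Lemma \ref{subadd} gives the nonnegative deficits $d_{j,i} = E(g_j)_i + E(Q_{j+1})_i - E(Q_j)_i \ge 0$, and summing over $j$ collapses to
$$\sum_{j=1}^{N} d_{j,i} = \sum_{j=1}^{N} E(g_j)_i + E(Q_{N+1})_i - E(Q_1)_i = E(e_A)_i + 0 - E(e_A)_i = 0,$$
using $E(k_{\co(A)})_i = 0$ and $Q_1 = e_A$. Since each $d_{j,i} \ge 0$, all of them vanish, so $E$ is additive at every step, which is exactly the condition making $g_j \star Q_{j+1} = g_j \cdot Q_{j+1}$. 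Hence the $\star$-product of the factors reproduces $e_A$.

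I expect the only delicate point to be the bookkeeping that replaces each global generator $e_i = \sum_\lambda e_{i,\lambda}$ (and likewise $f_i$) by a genuine localized generator at each step: one must check that multiplying $Q_{j+1} = e_{A_j}$ on the left by $e_i$ selects the single summand $e_{i,\co(A_j)}$, as recorded in the discussion preceding Lemma \ref{FundMult}, so that the reduced monomial really is a word in the generators $e_{i,\lambda}$, $f_{i,\lambda}$, $k_\lambda$. Granting this, the telescoping computation above is purely formal and completes the proof; no case analysis of the multiplication rules is needed beyond the fact that each single-generator multiplication produces one basis element.
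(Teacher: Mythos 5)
Your argument is correct and follows essentially the same route as the paper, which simply cites the definition of $\star$ together with the reduced-path expressions of Lemma \ref{monomial}; your telescoping of the nonnegative deficits $d_{j,i}$ from Lemma \ref{subadd} is exactly the bookkeeping that makes the paper's one-line proof rigorous. The one point you flag as delicate (replacing $e_i=\sum_\lambda e_{i,\lambda}$ by the single surviving summand $e_{i,\co(A_j)}$) is indeed handled by the remark preceding Lemma \ref{FundMult}, so nothing is missing.
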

\begin{proof}
The lemma follows from the definition of multiplication in $DS_0(n,r)$ and
the construction of the basis elements in Lemma \ref{monomial}.
\end{proof}

Recall the algebra $\mathbb{F}\Sigma(n,r)/\mathcal{J}(\underline{t})$ in Section 4. In particular, we have 
 $$D_{\underline{0}}(n, r)=\mathbb{F}\Sigma(n,r)/\mathcal{J}(\underline{0}).$$ 
The difference between $D_{\underline{0}}(n, r)$ and $S_0(n, r)$ is that the idempotent relations 
in $S_0(n, r)$ are replaced by zero relations. 
Recall also for $j\geq i$,
$$e(i,j)=e_i\cdot e_{i+1}\cdot ... \cdot e_j.$$

\begin{lemma}\label{comm}
Let $i<j$ and $a,b\geq 0$ such that $j-i>a+b$. Then
$$e(i,j)\cdot e(i+a,j-b)=e(i+a,j-b)\cdot e(i,j)$$
in $D_{\underline{0}}(n,r)$.
\end{lemma}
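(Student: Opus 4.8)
The plan is to reduce the whole statement to a single commutation identity between one generator and an interval product, and then iterate. Throughout I work only with the generators $e_l$ and the Serre relations $P_{ij,\lambda}$, which lie in $\mathcal{J}(\underline 0)$ and are therefore valid in $D_{\underline 0}(n,r)$; the modified relations $C_{ij,\lambda}(\underline 0)$ never enter, since every word involved contains no $f$'s. Written globally (summing the $P_{ij,\lambda}$ over the vertices $\lambda$), the relations I use are $e_l^2e_{l+1}=e_le_{l+1}e_l$ and $e_{l+1}e_le_{l+1}=e_le_{l+1}^2$, together with $e_le_m=e_me_l$ for $|l-m|\ge 2$.

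The key step is to prove the following single-generator formula: for $i\le l\le j$,
$$e(i,j)\,e_l=e_l\,e(i,j)=e(i,l)\,e(l,j).$$
For the right-hand product I start from $e(i,j)e_l=e_i\cdots e_le_{l+1}\cdots e_j\,e_l$, commute the trailing $e_l$ leftward past $e_{l+2}\cdots e_j$ (distant indices), and then apply $e_le_{l+1}e_l=e_l^2e_{l+1}$ to collapse the resulting $e_le_{l+1}e_l$; this yields $e_i\cdots e_{l-1}e_l^2e_{l+1}\cdots e_j=e(i,l)e(l,j)$. Symmetrically, $e_le(i,j)$ is computed by commuting the leading $e_l$ rightward past $e_i\cdots e_{l-2}$ and applying $e_le_{l-1}e_l=e_{l-1}e_l^2$, giving the same normal form $e(i,l)e(l,j)$. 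Comparing the two computations shows that $e_l$ commutes with $e(i,j)$ whenever $l\in\{i,\dots,j\}$. The boundary values $l=i$ and $l=j$ reduce directly to $e_i^2e_{i+1}\cdots e_j$ and $e_i\cdots e_{j-1}e_j^2$, respectively.

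Granting this, the lemma is immediate. Under the hypotheses $i\le i'=i+a$ and $j'=j-b\le j$, every index of $e(i',j')=e_{i'}e_{i'+1}\cdots e_{j'}$ lies in $\{i,\dots,j\}$, so by the single-generator formula $e(i,j)$ commutes with each factor $e_l$ of $e(i',j')$. Moving $e(i,j)$ one factor at a time from the left of $e(i',j')$ to its right then gives $e(i,j)e(i',j')=e(i',j')e(i,j)$.

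I expect the main obstacle to be the bookkeeping in the single-generator formula rather than any conceptual difficulty: one must check that the two Serre relations telescope exactly as claimed and that the boundary cases $l=i,j$ behave, and one should note that the computation takes place at the level of the global generators $e_l=\sum_\lambda e_{l,\lambda}$, so that each localized relation $P_{ij,\lambda}$ contributes to the global identity used. The strict inequality $j-i>a+b$ is not actually needed for commutativity (the argument also covers $i'=j'$); it only guarantees that $e(i',j')$ is a genuine product of at least two generators.
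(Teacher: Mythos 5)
Your proposal is correct and follows essentially the same route as the paper: both reduce the statement to the single-generator commutation $e(i,j)e_l=e_le(i,j)$ for $i\le l\le j$ and then settle that case using the distant commutativity $e_le_m=e_me_l$ ($|l-m|\ge 2$) together with the Serre relations $P_{l,l\pm 1}$. Your extra step of exhibiting the common normal form $e(i,l)e(l,j)$ is a slightly more explicit bookkeeping of what the paper disposes of by reducing to the three local cases $(i,j)=(l-1,l),(l-1,l+1),(l,l+1)$, but the argument is the same.
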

\begin{proof}
It suffices to prove that $e(i,j)\cdot e_l = e_l\cdot e(i,j)$ for all $l=i,\cdots,j$. In fact,
due to the commutativity relation $e_me_{m'}=e_{m'}e_m$ when $|m-m'|>1$, we need only prove
$e(i,j)\cdot e_l=e_l\cdot e(i,j)$ for $(i,j)=(l-1,l),(l-1,l+1),(l,l+1)$. All the three cases follow
 from the relations $P_{mm'}$.
\end{proof}

\begin{proposition}\label{dimensionlemma}
$\mathrm{dim}_\mathbb{F}D_{\underline{0}}(n,r)=
\mathrm{dim}_{\mathbb{F}}S_0(n,r)$

\end{proposition}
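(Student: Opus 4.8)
The plan is to sandwich the dimension: I will show $\dim_\mathbb{F} D_{\underline{0}}(n,r)\le |\Xi(n,r)|$ and $\dim_\mathbb{F} D_{\underline{0}}(n,r)\ge |\Xi(n,r)|$, and then invoke that $\dim_\mathbb{F} S_0(n,r)=\dim_\mathbb{F} DS_0(n,r)=|\Xi(n,r)|$ (both algebras have basis indexed by $\Xi(n,r)$). For each $A\in\Xi(n,r)$, write $\bar e_A\in D_{\underline{0}}(n,r)$ for the image of the PBW-type standard monomial attached to $A$ in Lemma~\ref{monomial}(2). The upper bound will come from the $\bar e_A$ spanning $D_{\underline{0}}(n,r)$, and the lower bound from a surjection onto $DS_0(n,r)$ under which $\bar e_A\mapsto e_A$.

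For the upper bound I would argue that an arbitrary path in $\Sigma(n,r)$ is, modulo $\mathcal{J}(\underline 0)$, a scalar multiple of some $\bar e_A$ or is zero. The relations of $D_{\underline{0}}(n,r)$ that preserve path length are precisely the length-preserving relations of $S_0(n,r)$: the Serre relations $P_{ij,\lambda}$, $N_{ij,\lambda}$, the commutativity relations $C_{ij,\lambda}(\underline 0)$ with $i\neq j$, the interior relations $C_{ii,\lambda}(\underline 0)$, and the commutations of Lemma~\ref{comm}. Using these one straightens any path into the PBW shape of Lemma~\ref{monomial}(2), exactly as in $S_0(n,r)$. The only divergence from $S_0(n,r)$ occurs where the straightening there would apply a length-reducing idempotent relation $e_if_i\mapsto k_\lambda$ or $f_ie_i\mapsto k_\lambda$ at a boundary vertex; in $D_{\underline{0}}(n,r)$ the corresponding relation $C_{ii,\lambda}(\underline 0)$ reads $e_if_i=0$ (resp. $f_ie_i=0$) and kills the whole term. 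Hence the straightening terminates at a standard monomial of the same length or at $0$, the $\bar e_A$ span, and $\dim_\mathbb{F} D_{\underline{0}}(n,r)\le|\Xi(n,r)|$.

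For the lower bound I would define $\phi\colon D_{\underline{0}}(n,r)\to DS_0(n,r)$ on generators by $k_\lambda\mapsto k_\lambda$, $e_{i,\lambda}\mapsto e_{i,\lambda}$, $f_{i,\lambda}\mapsto f_{i,\lambda}$, and check it respects the presentation. A product of raising operators contains no $f_i$, so no $C_{ii}$ collapse can intervene; it is therefore a reduced path, its $E$-degree is additive, and it agrees with its value in $S_0(n,r)$, so each $P_{ij,\lambda}$ holds in $DS_0(n,r)$ verbatim, and dually for $N_{ij,\lambda}$ with the $f_i$. For $C_{ij,\lambda}(\underline 0)$: when $i\neq j$ or $\lambda$ is interior both sides are degree-additive and equal their $S_0(n,r)$-values, giving $e_if_j=f_je_i$; at a boundary vertex $e_if_i$ (resp. $f_ie_i$) equals $k_\lambda$ in $S_0(n,r)$, a term of strictly smaller degree, hence $0$ under $\star$, matching $C_{ii,\lambda}(\underline 0)$. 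So $\phi$ is a well-defined algebra homomorphism, and it is surjective by Lemma~\ref{generator}. Therefore $\dim_\mathbb{F} D_{\underline{0}}(n,r)\ge\dim_\mathbb{F} DS_0(n,r)=|\Xi(n,r)|$.

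Combining the two bounds yields $\dim_\mathbb{F} D_{\underline{0}}(n,r)=|\Xi(n,r)|=\dim_\mathbb{F} S_0(n,r)$; since $\phi(\bar e_A)=e_A$ (the standard monomial is degree-additive, so its $\star$-product is $e_A$), the $\bar e_A$ in fact form a basis. I expect the spanning step to be the main obstacle: one must be sure the straightening is confluent, so that it neither identifies two distinct monomials $\bar e_A,\bar e_{A'}$ nor inadvertently sends some $\bar e_A$ to $0$. This is precisely what the homomorphism $\phi$ controls, since it forces the images $\phi(\bar e_A)=e_A$ to stay linearly independent in $DS_0(n,r)$; thus the two halves of the argument dovetail, and it is the interaction of the length-preserving relations with the boundary relations $e_if_i=0$ that makes the count equal $|\Xi(n,r)|$ rather than drop below it.
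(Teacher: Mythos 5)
Your proof is correct, and while its first half follows the paper, its second half takes a genuinely different route. The upper bound is the paper's argument: every nonzero path in $D_{\underline{0}}(n,r)$ is straightened into the PBW form of Lemma~\ref{monomial}(2) using only the length-preserving relations, any attempted length reduction being replaced by the zero relation $e_if_ik_\lambda=0$ or $f_ie_ik_\lambda=0$ at a boundary vertex; the paper carries this out by induction on length via Lemma~\ref{comm}, and you should do the same rather than leave it at ``exactly as in $S_0(n,r)$''. For the lower bound the paper never passes through $DS_0(n,r)$: it argues directly that each reduced path of $S_0(n,r)$ stays nonzero in $D_{\underline{0}}(n,r)$ (if it were killed, the Serre relations alone would rewrite it with a factor $e_{i}f_{i}k_\lambda$ or $f_{i}e_{i}k_\lambda$ at a boundary $\lambda$, and applying $C_{ii,\lambda}$ back in $S_0(n,r)$ would shorten it, contradicting reducedness), and that paths identified in $D_{\underline{0}}(n,r)$ are identified in $S_0(n,r)$ because the binomial relations of the former are among those of the latter. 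You instead verify that the obvious assignment on generators yields a surjection $D_{\underline{0}}(n,r)\twoheadrightarrow DS_0(n,r)$ and quote $\dim_{\mathbb F}DS_0(n,r)=|\Xi(n,r)|$; this surjection is exactly the map $\psi$ that the paper introduces, without verification, only in the proof of Theorem~\ref{maintheorem}. Your verification is sound: a string of $e$'s (resp.\ $f$'s) issuing from an idempotent keeps the matrix upper (resp.\ lower) triangular, so by Lemma~\ref{FundMult} every step is degree-additive, whereas $e_if_ik_\lambda$ and $f_ie_ik_\lambda$ at a boundary $\lambda$ drop degree and hence vanish in the associated graded algebra. What your route buys is that the isomorphism $D_{\underline{0}}(n,r)\cong DS_0(n,r)$ of Theorem~\ref{maintheorem} and the linear independence of the standard monomials come out simultaneously, with no need to worry about confluence of the rewriting; what it costs is the explicit relation check in $DS_0(n,r)$, which the paper's self-contained path argument avoids.
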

\begin{proof}
 Observe that in both algebras, a complete list of representatives of nonzero paths is a basis. In particular,  the paths of the form
in Lemma \ref{monomial} (2), which are all reduced paths, form  a basis for $S_0(n, r)$.

As the relations $P_{ij}$ and $N_{ij}$ are the same in both algebra, if two nonzero paths are
the same in $D_{\underline{0}}(n,r)$, they are then the same in $S_0(n, r)$. Further,
 any reduced path that is nonzero in $\zschur$ is also nonzero in
$D_{\underline{0}}(n,r)$. Indeed,
take a nonzero reduced path $\rho=...e_{i, \lambda}..f_{j, \mu}...$ in
$\Sigma(n,r)$.
Now suppose that $0=\rho\in D_{\underline{0}}(n,r)$. This implies that
using relations $P_{ij, \lambda}, N_{ij, \lambda}$,
$$\rho=...e_{i, \lambda-\alpha_i}f_{i, \lambda}{k_\lam}...\mbox{ or }...{k_\lam}f_{i, \lambda+\alpha_i}e_{i, \lambda}... \;\;\; (\dagger)$$
with $\lambda$ at the boundary. So the new expression $(\dagger)$ also holds  in $\zschur$.
By the relations
$C_{ij, \lambda}$, $$e_{i, \lambda-\alpha_i+\alpha_{i+1}}f_{i, \lambda}{k_\lam}=
{k_\lam}f_{i, \lambda+\alpha_i-\alpha_{i+1}}e_{i, \lambda}=k_\lam,$$ which
contradicts the minimality of the number of arrows in $\rho$. So $\rho$
is a non-zero path
 in $D_{\underline{0}}(n,r)$. Therefore
\begin{equation}\label{inequal}\mathrm{dim}_\mathbb{F}D_{\underline{0}}(n,r) \geq \mathrm{dim}_{\mathbb{F}}S_0(n,r).
\end{equation}

%Now assume that $\rho$ and $\rho'$ are two reduced paths such that $\rho=\rho'$ in $S_0(n,r)$.
%We need to show that $\rho=\rho'$ in $\mathbb{F}\Sigma(n,r)/\mathcal{J}(0)$.
%We do that by showing that
Next we claim that any reduced path $\rho$  in $D_{\underline{0}}(n,r)$ is equal to a path of the form $(2)$ in
Lemma \ref{monomial} and thus by the observation and the inequality (\ref{inequal}), 
the dimensions of the two algebras are the same.
We proceed the proof of the claim by induction on the length of $\rho$. When the length of $\rho$ is at most
$1$, then it already has the required form. Assume that the length is larger than $1$, and
that $\rho = \rho\cdot k_\lambda$.
We have $$\rho = \rho'e_ik_\lambda\mbox{ or }\rho'f_ik_\lambda$$ where $\rho'$ is a path
of length one less than $\rho$. By the induction hypothesis, we may write $\rho'$
of the form (2) in Lemma \ref{monomial},
$$\rho'=EFk_{\lambda'}$$ where $E=e(i_s,j_s)\cdot ....\cdot e(i_1,j_1)$ and $F=f(i'_1,j'_1)\cdot ... \cdot
f(i'_t,j'_t)$.
We first consider the case $\rho = \rho'e_ik_\lambda$. Since $\rho$ is reduced, we have
$$\rho = EFe_ik_\lambda = Ee_iFk_\lambda.$$

If $j_m\neq i-1$ for all $m$, the relations $P_{ij}$ %$C_{ij}(\underline{0})$ 
give us the required form
$$Ee_iFk_\lambda=e(i_s,j_s)\cdot ....\cdot e(i_l,j_l) \cdot e(i,i) \cdot ... \cdot e(i_1,j_1)Fk_\lambda,$$ where
$l$ is the smallest integer such that $j_l>i$.

Let $m$ be the smallest integer with $j_m=i-1$. %By the relations $C_{ij}(0)$, we
We have
$$Ee_i=e(i_s,j_s)\cdot ....\cdot e(i_m,j_m+1) \cdots e(i_1,j_1).$$
By repeatedly applying Lemma \ref{comm} and relations $P_{ij}$, we can move $e(i_m,j_m+1)$ to the
appropriate position.

The case $\rho=\rho'f_ik_\lambda$ is similar, and so we skip the details.
In either case, the path $\rho$ is equal to a path of the form (2) in Lemma \ref{monomial}, as claimed.
%This finishes the proof.
%Since Lemma \ref{monomial} produces a basis for $S_0(n,r)$, we get
%the inequality $\mathrm{dim}_\mathbb{F}\mathbb{F}\Sigma(n,r)/\mathcal{J}(0)\leq \mathrm{dim}_{\mathbb{F}}S_0(n,r)$, and so part (2)
%follows.
\end{proof}

\begin{remark}
By Corollary \ref{cor4.7} and Proposition \ref{dimensionlemma}, 
the algebra  $D_{\underline{0}}(n, r)$ is a degeneration of 
the $0$-Schur algebra $S_0(n, r)$. 
\end{remark}
The following is the main result of this paper.

%the three $\mathbb{F}$-algebras  $S_0(n,r+n)$, $D_{\underline{0}}(n,r)$ and $DS_0(n,r)$ are isomorphic. 

\begin{theorem} \label{maintheorem}
%The following is true.
We have isomorphisms 
$$
S_0(n,r+n)/I(n,r+n)\cong D_{\underline{0}}(n,r)\cong DS_0(n,r),
$$
as $\mathbb{F}$-algebras.
\end{theorem}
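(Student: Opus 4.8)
The plan is to establish the two isomorphisms in Theorem~\ref{maintheorem} separately, building on the structural results already available. For the second isomorphism $D_{\underline{0}}(n,r)\cong DS_0(n,r)$, I would define a map on generators sending the quiver arrows $e_{i,\lambda}, f_{i,\lambda}$ and vertices $k_\lambda$ of $D_{\underline 0}(n,r)$ to the corresponding elements $e_{i,\lambda}, f_{i,\lambda}, k_\lambda$ of $DS_0(n,r)$, which generate by Lemma~\ref{generator}. The key point is that this is well defined: the Serre relations $P_{ij,\lambda}, N_{ij,\lambda}$ hold in $DS_0(n,r)$ because they preserve path length (hence the grading, by Lemma~\ref{subadd}), and the relations $C_{ij,\lambda}(\underline 0)$ hold because a product $e_if_i$ or $f_ie_i$ that decreases length becomes zero in the associated graded algebra $DS_0(n,r)$ exactly as the $\underline 0$-relation demands. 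Surjectivity is immediate from Lemma~\ref{generator}; for injectivity I would invoke the dimension count. Indeed Proposition~\ref{dimensionlemma} gives $\dim_{\mathbb{F}} D_{\underline 0}(n,r)=\dim_{\mathbb{F}} S_0(n,r)$, and since $DS_0(n,r)$ is the associated graded of the filtered algebra $S_0(n,r)$ (Corollary~\ref{cor1}) it has the same dimension as $S_0(n,r)$; a surjection of equal-dimensional $\mathbb{F}$-spaces is an isomorphism.

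For the first isomorphism $S_0(n,r+n)/I(n,r+n)\cong D_{\underline 0}(n,r)$, the natural strategy is a bijection on bases. By Lemma~\ref{descriptionofideals}, $S_0(n,r+n)/I(n,r+n)$ has a basis $\{e_A\}$ indexed by matrices $A\in\Xi(n,r+n)$ all of whose diagonal entries are nonzero, equivalently $A=A'+\mathrm{diag}(1,\dots,1)$ with $A'\in\Xi(n,r)$ arbitrary. This sets up a bijection with the basis $\{e_{A'}\}$ of $D_{\underline 0}(n,r)$ indexed by $\Xi(n,r)$, and the ranks match by Corollary~\ref{dim1}(2) combined with Proposition~\ref{dimensionlemma}. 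I would define the algebra map on generators and check that shifting the ambient composition by $(1,\dots,1)$ intertwines the fundamental multiplication rules of Lemma~\ref{FundMult}: the extra $+1$ on each diagonal guarantees every intermediate composition stays in the interior $\Lambda(n,r+n)\setminus B(n,r+n)$, so no boundary idempotent is ever produced, which is precisely the regime in which the $\underline 0$-relations and the $S_0$-relations coincide. The quotient by $I(n,r+n)$ kills exactly the paths that would leave the interior, matching the zero relations $C_{ii,\lambda}(\underline 0)$ in $D_{\underline 0}(n,r)$.

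The main obstacle I anticipate is making the diagonal-shift bijection into a genuine \emph{algebra} isomorphism rather than merely a linear one. The products in $S_0(n,r+n)/I(n,r+n)$ and in $D_{\underline 0}(n,r)$ are both governed by the multiplication rules, but I must verify that the shift $A\mapsto A+\mathrm{diag}(1,\dots,1)$ commutes with multiplication, including the subtle point that a product which lands in $I(n,r+n)$ (a matrix acquiring a zero diagonal entry, as in Example~\ref{newex}) corresponds exactly to a product that vanishes in $D_{\underline 0}(n,r)$. Tracking when the fundamental multiplication rules push an entry out of the allowed region, and confirming this coincides with the vanishing condition $E(e_A)+E(e_B)=E(e_A\cdot e_B)$ used to define $DS_0(n,r)$, is where the real content lies; the geometric reformulation in Lemma~\ref{geominterpretation} and the idempotent-versus-zero dichotomy in the proof of Proposition~\ref{dimensionlemma} are the tools I would lean on to pin this down.
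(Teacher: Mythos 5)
Your proposal is correct and matches the paper's own argument: both isomorphisms are obtained exactly as you describe, by mapping generators to generators (the diagonal shift $\lambda\mapsto\lambda+(1,\dots,1)$ embedding $\Sigma(n,r)$ into the interior of $\Sigma(n,r+n)$ for the first, the tautological map onto $DS_0(n,r)$ for the second), checking the relations $P$, $N$, $C(\underline{0})$ in the target, and concluding with the dimension counts of Corollary~\ref{dim1} and Proposition~\ref{dimensionlemma}. The only difference is that the basis-level multiplicativity check you flag as ``the real content'' is not actually needed: a surjective algebra homomorphism between $\mathbb{F}$-algebras of equal finite dimension is automatically an isomorphism, which is how the paper closes both halves.
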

\begin{proof}
We first prove the first isomorphism.
Embed $\Sigma(n, r)$ into the interior of $\Sigma(n, r+n)$ via $\phi: k_{\lambda}\mapsto k_{\mu}$ and embed
the arrows correspondingly, where $\mu=(\lam_1+1, \lam_2+1, \ldots, \lam_n+1)\in \Lambda(n,r+n)$. Then observe that the relations $\phi(P_{ij, \lambda})$,
$\phi(N_{ij, \lambda})$ and $\phi(C_{ij, \lambda}(\underline{0}))$ hold in
$S_0(n, r+n)/I(n,r+n)$. So we have a
surjective map
 $$D_{\underline{0}}(n,r)\twoheadrightarrow S_0(n, r+n)/I(n,r+n)$$
By Corollary \ref{dim1},
$$
\dim_\mathbb{F} S_0(n, r+n)/I(n,r+n)=\left(\begin{tabular}{c}$n^2+(n+r)-n-1$ \\$(n+r)-n$\end{tabular} \right)=\left(\begin{tabular}{c}$n^2+r-1$ \\$r$\end{tabular} \right),$$
which is the dimension of $S_0(n, r)$. So
 by Proposition \ref{dimensionlemma},
$$\dim_\mathbb{F} S_0(n, r+n)/I(n,r+n)= \dim_\mathbb{F} D_{\underline{0}}(n,r).$$
Thus the two algebras are isomorphic.

Mapping the vertices $k_\lambda$ and the arrows $e_{i,\lambda}$ and $f_{i,\lambda}$
to the corresponding generators of $DS_0(n,r)$ defines an algebra homomorphism
$\psi$ from $D_{\underline{0}}(n,r)$ to $DS_0(n,r)$.
By Lemma \ref{generator}, $\psi$ is an epimorphism. Note that
by definition,
$$\dim_\mathbb{F} DS_0(n, r)=\dim_\mathbb{F} S_0(n, r)$$
and thus by Proposition \ref{dimensionlemma},
$$\dim_\mathbb{F} DS_0(n, r)=\dim_\mathbb{F} D_{\underline{0}}(n,r).
$$
Therefore, $\psi$ is an isomorphism and thus the three algebras are isomorphic as claimed.
\end{proof}

\begin{remark}
\begin{itemize}
\item[(1)] By the isomorphisms in  Theorem \ref{maintheorem}, we can view 
any of the three algebras as a degeneration of $S_0(n, r)$.

\item[(2)] The multiplication $\star$ in Lemma \ref{lemma4.1} gives a geometric interpretation of the multiplications 
in $DS_0(n, r)$. Thus the space of $\GL(V)$-orbits in the double flag varieties $\mathcal{F}\times \mathcal{F}$ 
associated with the multiplication $\star$ gives a geometric construction of the three algebras in Theorem \ref{maintheorem}.
\end{itemize}
\end{remark}

\section{ centralizer algebras of $ D_\ut(n,r)$}\label{degalg}

In this section, we discuss relations between  the $0$-Hecke algebra $H_0(r)$, the nil-Hecke algebra $NH_0(r)$,
the nil-Temperley-Lieb algebra $NTL(r)$ and
degenerate $0$-Schur algebras. Throughout this section, we let $\alpha=(1, \dots, 1)$ be the composition in
$\Lambda(r, r)$ with all the entries equal to $1$.

\subsection{$H_0(r)$, $NH_0(r)$ and $DS_0(r, r)$}\label{sect5.1}
We first recall some definitions and key facts. We then show that the associated graded algebras of $H_0(r)$ and
$NH_0(r)$, with the filtration induced by the degree function
discussed in Section \ref{grading}, are isomorphic to a subalgebra of
$k_\alpha DS_0(r, r)k_\alpha$.

\begin{definition}[\cite{Dudeng},  \cite{Thibon}] \label{d2}  The 0-Hecke algebra, denoted by $H_0(r)$, is the $\mathbb{F}$-algebra
 generated by $T_1, T_2, \ldots, T_{r-1}$ with defining
relations
\begin{equation}\label{re1}
\begin{cases} T_i^2=T_i, \; \;& \text{for} \;1\leq i\leq r-1;\\
T_iT_j=T_jT_i, \;\;&\mbox{for $1\leq i,j\leq r-1$ with $|i-j|>1$};\\
T_iT_{i+1}T_i =T_{i+1}T_iT_{i+1},\;\;&\text{for $1\leq i\leq
r-2$}.
\end{cases}
\end{equation}
\end{definition}

%Let $k_{\alpha}\in DS_0(r,r)$, where $\alpha=(1,\ldots,1)\in \Lambda(r,r)$.

\begin{theorem} [Theorem 10.4, \cite{JS}] \label{thmjs}
The algebras $H_0(r)$ and $k_\alpha S_0(r, r)k_\alpha$ are isomorphic via the the map
$T_i\mapsto k_\alpha f_ie_i k_\alpha$.
\end{theorem}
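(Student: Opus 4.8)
The plan is to realise the centralizer algebra $k_\alpha S_0(r,r)k_\alpha$ explicitly on a combinatorial basis indexed by permutations and then match its multiplication with the left regular multiplication of $H_0(r)$. First I would identify the basis: since $k_\alpha e_A k_\alpha$ equals $e_A$ when $\ro(A)=\co(A)=\alpha$ and is $0$ otherwise by (\ref{idempotents}), the algebra $k_\alpha S_0(r,r)k_\alpha$ has basis those $e_A$ with $A\in\Xi(r,r)$ all of whose row and column sums are $1$ — that is, the $r\times r$ permutation matrices. Writing $A_w$ for the permutation matrix of $w\in S_r$ and $e_w:=e_{A_w}$, this gives a basis $\{e_w : w\in S_r\}$, so $\dim_\mathbb{F} k_\alpha S_0(r,r)k_\alpha=r!$, with identity $k_\alpha=e_{\mathrm{id}}$. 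Next, using Lemma \ref{FundMult} I would compute $\theta_i:=k_\alpha f_ie_ik_\alpha$: applying $e_i$ to $k_\alpha$ produces the matrix $\mathrm{id}+E_{i,i+1}-E_{i+1,i+1}$, and then $f_i$ pushes down the diagonal weight in row $i$, yielding the permutation matrix of the transposition $s_i$. Hence $\theta_i=e_{s_i}$.

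The heart of the proof is a single multiplication rule. For $w\in S_r$ I would evaluate $\theta_i e_w = k_\alpha f_ie_i e_w$ step by step with Lemma \ref{FundMult}: the factor $e_i$ moves the unique $1$ of row $i+1$ (lying in column $w^{-1}(i+1)$) up into row $i$, creating a row of weight two, after which $f_i$ pushes down the leftmost of the two entries of row $i$, namely the one in column $\min\{w^{-1}(i),w^{-1}(i+1)\}$. Tracking the two cases yields
$$
\theta_i e_w=
\begin{cases}
e_{s_iw}, & w^{-1}(i)<w^{-1}(i+1),\\
e_w, & w^{-1}(i)>w^{-1}(i+1),
\end{cases}
$$
which is exactly the left regular rule of $H_0(r)$, since $w^{-1}(i)<w^{-1}(i+1)$ is equivalent to $\ell(s_iw)>\ell(w)$. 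In particular $\theta_i^2=\theta_i$ (take $w=s_i$, where the length drops), the commutation $\theta_i\theta_j=\theta_j\theta_i$ for $|i-j|>1$ holds because $e_i,f_i$ commute with $e_j,f_j$ in $S_0(r,r)$, and both sides of the braid relation evaluate to $e_{s_is_{i+1}s_i}$ by repeated application of the rule.

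Finally I would assemble the isomorphism. The relations just verified show that $T_i\mapsto\theta_i$ extends to a well-defined algebra homomorphism $\Theta\colon H_0(r)\to k_\alpha S_0(r,r)k_\alpha$ with $\Theta(1)=k_\alpha=e_{\mathrm{id}}$. By induction on $\ell(w)$, choosing a reduced expression $w=s_{i_1}\cdots s_{i_k}$ and applying the length-increasing branch of the rule repeatedly, one gets $\Theta(T_w)=e_w$ for every $w\in S_r$. A standard Matsumoto-type reduction (braid moves together with $T_i^2=T_i$) shows that $\{T_w : w\in S_r\}$ spans $H_0(r)$; since $\Theta$ carries this spanning set bijectively onto the basis $\{e_w\}$, it is an isomorphism.

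The main obstacle is the case analysis behind the multiplication rule: one must follow the $\max$ and $\min$ prescriptions of Lemma \ref{FundMult} through the two applications of $e_i$ and $f_i$ and check that in the degenerate case $w^{-1}(i)>w^{-1}(i+1)$ the entry is pushed straight back, returning $e_w$ rather than a longer permutation. This degeneration is the corner-algebra shadow of the $q=0$ idempotent relation $C_{ii}$, and it is precisely what converts the ordinary braid relations into the $0$-Hecke relation $\theta_i^2=\theta_i$; getting the descent condition $\ell(s_iw)\lessgtr\ell(w)$ to emerge cleanly from the combinatorics is the delicate point.
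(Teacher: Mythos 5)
Your argument is correct. Note that the paper does not prove this statement itself: it imports it as Theorem 10.4 of \cite{JS}, so there is no in-text proof to compare against. Your route --- identifying the basis of $k_\alpha S_0(r,r)k_\alpha$ with permutation matrices, computing $\theta_i e_w$ via the fundamental multiplication rules of Lemma \ref{FundMult} to recover exactly the left-regular rule (\ref{0Heckemult}), and then matching the spanning set $\{T_w\}$ with the basis $\{e_w\}$ --- is sound and is essentially the expected argument behind the cited result; in particular your descent criterion $w^{-1}(i)<w^{-1}(i+1)\iff\ell(s_iw)=\ell(w)+1$ and the degenerate case returning $e_w$ are handled correctly.
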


\begin{definition}[\cite{Roq}] \label{d1}
The nil-Hecke algebra, denoted by $NH_0(r)$, is the unital $\mathbb{F}$-algebra
 generated by $T_1, T_2, \ldots, T_{r-1}$ with defining
relations
\begin{equation}\label{re1}
\begin{cases} T_i^2=0, \; \;& \text{for} \;1\leq i\leq r-1;\\
T_iT_j=T_jT_i, \;\;&\mbox{for $1\leq i,j\leq r-1$ with $|i-j|>1$};\\
T_iT_{i+1}T_i =T_{i+1}T_iT_{i+1},\;\;&\text{for $1\leq i\leq
r-2$}.
\end{cases}
\end{equation}
\end{definition}

Assume that $w=s_{i_1}\cdots s_{i_t}=s_{j_1}\cdots s_{j_t}$ are
reduced expressions in the symmetric group $S_r$ on $r$ letters,
where $s_i$ is the transposition $(i, i+1)$.
As reduced expressions of $w$ can be obtained from each other by
using the braid relations only (see \cite{Matsumoto, tits}), we have 
 $$T_{i_1}\cdots
T_{i_t}=T_{j_1}\cdots T_{j_t}$$ 
in both $H_0(r)$ and $NH_0(r)$,
and thus the element
$$T_w=T_{i_1}\cdots T_{i_t}$$
is well-defined in both algebras. One can also deduce
that $\{T_w~|~w\in S_r\}$ is a basis for both algebras. Further,

\begin{equation}\label{0Heckemult}
\mbox{  in } H_0(r), \;
T_{i}T_{w}=
\begin{cases}
T_{s_iw},& \ell(s_iw)=\ell(w)+1;\\
T_w,  & \text{otherwise},
\end{cases}
\end{equation}
% where $\ell: W\rightarrow\bbn\cup\{0\}$ is the length function.
and
\begin{equation}\label{Heckemult}
\mbox{  in } NH_0(r), \;
T_{w_1}T_{w_2}=
\begin{cases}
T_{w_1w_2},& \ell(w_1w_2)=\ell(w_1)+\ell(w_2);\\
0,  & \text{otherwise},
\end{cases}
\end{equation}
 where $\ell: W\rightarrow\bbn\cup\{0\}$ is the length function of elements in $S_r$.

By Theorem \ref{thmjs}, the filtration   of the $0$-Schur algebra $S_0(r, r)$
discussed in Section \ref{grading} induces a filtration  on $H_0(r)$. When the length equation in (\ref{0Heckemult}) or
(\ref{Heckemult}) holds,
the multiplications of $T_w$ and $T_{w'}$ in  $H_0(r)$ and  $NH_0(r)$ are the same, so we also have a
filtration on $NH_0(r)$. Further,
together with the fact that  $DS_0(r, r)$ is the associated graded algebra of $S_0(r, r)$,
this implies the following.

\begin{proposition}
The associated graded algebras of  $H_0(r)$ and $NH_0(r)$ are isomorphic to the algebra  $k_\alpha DS_0(r, r) k_\alpha$.
\end{proposition}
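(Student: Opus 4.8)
The plan is to establish that both $H_0(r)$ and $NH_0(r)$ carry a filtration, compute their associated graded algebras, and identify each with the same explicit subalgebra of $k_\alpha DS_0(r,r)k_\alpha$. I would begin from the isomorphism of Theorem~\ref{thmjs}, which sends $T_i\mapsto k_\alpha f_ie_ik_\alpha$ and identifies $H_0(r)$ with $k_\alpha S_0(r,r)k_\alpha$. Under this isomorphism, the basis element $T_w$ corresponds to some $e_{A_w}$, and the degree function $e_A\mapsto(E(e_A),F(e_A))$ of Corollary~\ref{cor1} restricts to give a filtration on $k_\alpha S_0(r,r)k_\alpha$, hence on $H_0(r)$. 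The key point, already isolated in the lead-up text, is that the multiplication rules \eqref{0Heckemult} and \eqref{Heckemult} agree exactly in the length-additive case $\ell(w_1w_2)=\ell(w_1)+\ell(w_2)$; when length is additive the product $T_{w_1}T_{w_2}=T_{w_1w_2}$ is the same in both algebras and, by Lemma~\ref{subadd}, this is precisely the situation in which the degree is additive so that the product survives in the associated graded. This lets me transport the filtration on $H_0(r)$ to $NH_0(r)$ using the common basis $\{T_w\mid w\in S_r\}$.

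Next I would compute the two associated graded algebras directly. For $NH_0(r)$ the filtered multiplication is $T_{w_1}T_{w_2}=T_{w_1w_2}$ when length adds and $0$ otherwise, so in the associated graded the only surviving products are the length-additive ones, giving exactly the same structure constants; thus $\mathrm{gr}\,NH_0(r)\cong NH_0(r)$ already, and it has basis $\{T_w\}$. For $H_0(r)$ the product $T_iT_w=T_w$ when length drops lands in a strictly lower filtration degree, so it becomes $0$ in the associated graded; consequently $\mathrm{gr}\,H_0(r)$ has the same length-additive multiplication table as $\mathrm{gr}\,NH_0(r)$. This shows $\mathrm{gr}\,H_0(r)\cong\mathrm{gr}\,NH_0(r)$ as $\mathbb{F}$-algebras with basis indexed by $S_r$.

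Finally I would identify this common graded algebra with $k_\alpha DS_0(r,r)k_\alpha$. Since $DS_0(r,r)$ is by construction the associated graded of $S_0(r,r)$ (the definition of $\star$ via $E(e_A)+E(e_B)=E(e_A\cdot e_B)$), and taking associated graded is compatible with the idempotent truncation by $k_\alpha$, the graded algebra $\mathrm{gr}(k_\alpha S_0(r,r)k_\alpha)$ is canonically $k_\alpha DS_0(r,r)k_\alpha$. Combining this with $\mathrm{gr}\,H_0(r)\cong\mathrm{gr}(k_\alpha S_0(r,r)k_\alpha)$ from Theorem~\ref{thmjs} yields $\mathrm{gr}\,H_0(r)\cong k_\alpha DS_0(r,r)k_\alpha$, and the transported filtration gives the same for $NH_0(r)$.

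The main obstacle I anticipate is making precise the claim that passing to the associated graded commutes with the idempotent truncation $x\mapsto k_\alpha x k_\alpha$, and checking that the filtration induced on $NH_0(r)$ through the basis $\{T_w\}$ is genuinely an algebra filtration (i.e. that $\deg(T_{w_1}T_{w_2})\le\deg T_{w_1}+\deg T_{w_2}$ even in the non-length-additive case where the product is $0$). The first is routine once one notes $k_\alpha$ is a homogeneous idempotent of degree $0$; the second requires verifying that the degree assigned to $T_w$ via the matrix $A_w$ is itself additive along reduced words, which follows from Lemma~\ref{subadd} applied repeatedly, but the bookkeeping comparing $\ell(w)$ with the entries $E(e_{A_w})_i$ is where care is needed.
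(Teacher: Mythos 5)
Your overall strategy matches the paper's: transfer the filtration of $S_0(r,r)$ to $H_0(r)$ via Theorem \ref{thmjs}, transport it to $NH_0(r)$ through the common basis $\{T_w\}$, and identify both associated graded algebras with $k_\alpha DS_0(r,r)k_\alpha$; the identification $\mathrm{gr}(k_\alpha S_0(r,r)k_\alpha)=k_\alpha\,\mathrm{gr}(S_0(r,r))\,k_\alpha$ is indeed routine. However, there is a genuine error in your computation of the associated graded algebras: you claim that the products surviving in the associated graded are exactly the length-additive ones, and in particular that $\mathrm{gr}\,NH_0(r)\cong NH_0(r)$. This is false. The criterion for $\bar{T}_{w_1}\bar{T}_{w_2}$ to be nonzero in the associated graded is additivity of the degree $(E,F)$, which is strictly stronger than additivity of length. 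Concretely, for $r=3$ take $w_1=s_1$ and $w_2=s_2s_1$: then $\ell(w_1w_2)=3=\ell(w_1)+\ell(w_2)$ and $T_1T_{s_2s_1}=T_{w_0}\neq 0$ in $NH_0(3)$, but the corresponding matrices give $E(e_{A_{s_1}})=(1,0)$, $E(e_{A_{s_2s_1}})=(1,1)$ and $E(e_{A_{w_0}})=(1,1)\neq (2,1)$, so this product vanishes in the associated graded. One can also see directly that $NH_0(3)$ and $k_\alpha DS_0(3,3)k_\alpha$ are not isomorphic: the former has $\mathrm{rad}^3\neq 0$ (spanned by $T_{w_0}$), while in the latter every basis element has total $E$-degree at most $2$, so any product of three radical elements vanishes. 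Your intermediate claim would force $NH_0(r)\cong k_\alpha DS_0(r,r)k_\alpha$, which also contradicts the fact, used in Section 6.2, that the elements $x_i$ generate only the Catalan-dimensional subalgebra $\underline{DS}(r,\alpha)\cong NTL(r)$.

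The proposition is nevertheless true, and the repair is the implication in the opposite direction: degree-additivity implies length-additivity. If $\ell(w_1w_2)<\ell(w_1)+\ell(w_2)$, then expanding $T_{w_1}$ along a reduced word, some step has $T_iT_w=T_w$, which discards the nonzero degree of $T_i$; since every step is degree-subadditive by Lemma \ref{subadd}, the whole product is strictly degree-subadditive. Hence in both $\mathrm{gr}\,H_0(r)$ and $\mathrm{gr}\,NH_0(r)$ the product $\bar{T}_{w_1}\bar{T}_{w_2}$ equals $\bar{T}_{w_1w_2}$ when the degree is additive and is $0$ otherwise --- for $NH_0(r)$ this uses that degree-additive products are automatically length-additive and therefore do not already vanish in $NH_0(r)$ --- so the two graded algebras have identical structure constants and both coincide with $k_\alpha DS_0(r,r)k_\alpha$. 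This is essentially the argument the paper intends; note that the paper never asserts $\mathrm{gr}\,NH_0(r)\cong NH_0(r)$, only that the two multiplications agree in the length-additive case, which is exactly what is needed once the above implication is in place.
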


\subsection{Nil-Temperley-Lieb algebras}\label{sec5.2}
The nil-Temperley-Lieb algebra  $NTL(r)$ is 
the quotient algebra of $NH_0(r)$ modulo the ideal generated by $T_iT_{i+1}T_i$ for $1\leq i\leq r-2$
(see e.g. \cite{Fomin}).

\begin{lemma}
The algebra ${NTL}(r)$ has a basis consisting of $T_w$, where  $w$ does not contain a subword of the form   $s_is_js_i$  in any of its reduced expressions,
for any $i$ and $j$ with $|i-j|=1$.
\end{lemma}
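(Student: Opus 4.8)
The plan is to work entirely inside the nil-Hecke algebra $NH_0(r)$, using its basis $\{T_w\mid w\in S_r\}$ and the multiplication rule (\ref{Heckemult}), and to pin down exactly the two-sided ideal $J\subseteq NH_0(r)$ generated by the elements $T_iT_{i+1}T_i$ ($1\leq i\leq r-2$), so that $NTL(r)=NH_0(r)/J$. Concretely, I would prove that $J=\mathrm{span}\{T_w\mid w\ \text{has a reduced expression containing a factor}\ s_is_{i+1}s_i\}$. Since $J$ is then the span of a subset of the basis $\{T_w\}$, the quotient $NTL(r)$ has as a basis the images of the complementary basis vectors, namely the $T_w$ for which \emph{no} reduced expression contains such a factor; this is precisely the asserted basis. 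I would begin by recording that, by the braid relation and (\ref{Heckemult}), $T_iT_{i+1}T_i=T_{s_is_{i+1}s_i}$, since $\ell(s_is_{i+1}s_i)=3$ and lengths add.

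For the inclusion $\mathrm{span}\{T_w\mid w\ \text{not avoiding}\}\subseteq J$, take any $w$ admitting a reduced expression with a consecutive factor $s_is_{i+1}s_i$, and write the corresponding reduced factorisation $w=u\cdot(s_is_{i+1}s_i)\cdot v$ with $\ell(w)=\ell(u)+3+\ell(v)$. Because this concatenation is reduced, the lengths add at each stage, so (\ref{Heckemult}) gives $T_w=T_u\,(T_iT_{i+1}T_i)\,T_v\in J$. In particular the images in $NTL(r)$ of the non-avoiding $T_w$ vanish, so the avoiding $T_w$ already span the quotient.

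For the reverse inclusion, I would use that $J$, being the two-sided ideal generated by the $T_{s_is_{i+1}s_i}$, is spanned by the products $T_u\,T_{s_is_{i+1}s_i}\,T_v$ with $u,v\in S_r$ — this is where it is essential that the generators are themselves basis elements and that multiplication is bilinear on the basis $\{T_w\}$. By (\ref{Heckemult}) each such product is either $0$ or a single basis element $T_w$, and when it is nonzero the lengths add, so $w=u\,s_is_{i+1}s_i\,v$ has a reduced expression displaying the factor $s_is_{i+1}s_i$; hence $w$ is not avoiding. This yields $J\subseteq\mathrm{span}\{T_w\mid w\ \text{not avoiding}\}$, and combined with the previous paragraph gives the equality, whence the basis statement.

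The only genuine content, and the step I expect to be the crux, is this reverse inclusion: it is what forces linear independence of the avoiding $T_w$ in the quotient, and it relies on the double feature of $NH_0(r)$ that its defining products behave like a monoid algebra of $S_r$ with truncation (so that $T_u g T_v$ is always $0$ or a basis vector). One point I would flag explicitly is the equivalence between the two descriptions of the non-avoiding set: the lemma's condition is phrased as "no reduced expression contains $s_is_js_i$", so $w$ fails to be avoiding exactly when \emph{some} reduced expression contains such a factor, which is the formulation used above; no appeal to the theory of fully commutative (equivalently $321$-avoiding) elements is needed, though it is worth remarking that the resulting basis is indexed by precisely those elements.
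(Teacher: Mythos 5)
Your proposal is correct and follows exactly the route the paper intends: the paper's proof is the one-line remark that the lemma ``follows from the fact that $\{T_w\mid w\in S_r\}$ is a basis of $NH_0(r)$ and the multiplication in (\ref{Heckemult})'', and your argument is precisely the fleshed-out version of that, identifying the ideal generated by the $T_iT_{i+1}T_i$ with the span of the non-avoiding basis elements $T_w$.
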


%\subsection{The algebra $k_{\alpha}DS_0(r, r)k_{\alpha}$}

\begin{proof}
It follows from the fact that $\{T_w|w\in S_r\}$ is a basis of $NH_0(r)$ and the mulitplication in  (\ref{Heckemult}).
\end{proof}

Note that $\mathrm{dim}NTL(r)$ is the Catalan number $\frac{1}{r+1}\left(\begin{matrix} 2r\\ r\end{matrix}\right)$ and there is a useful combinatorial parametrization of the elements in a basis of  $NTL(r)$,
 using Dyck words.  Pictorially, Dyck words can be described using peak pictures in a  triangle with $r$ dots on each edge (cf \cite{Stanley}).
For instance, when $r=3$, the five peak pictures are as follows.

$$\xymatrix@!=0.2cm @M=0pt{
&&.&&                                               &   &&.&&                                           &   &&.&&
\\&.&&.&                                           &   &.&&.&                                          &   &.&&.&
\\.\ar@{-}[rr]&&.\ar@{-}[rr]&&.      &   .\ar@{-}[rr]&&.\ar@{-}[ur]&&. \ar@{-}[ul]   &    .\ar@{-}[ur]&&.\ar@{-}[ul]\ar@{-}[ur]&&.\ar@{-}[ul]
} $$

\vspace{5mm}
$$
\xymatrix@!=0.2cm @M=0pt{
&&.&&                                                          &   &&.&&
\\&.&&.&                                                        &   &.\ar@{-}[ur]&&.\ar@{-}[ul]&
\\.\ar@{-}[ur]&&.\ar@{-}[ul]\ar@{-}[rr]&&.         &   .\ar@{-}[ur]&&.&&. \ar@{-}[ul]
} $$

\vspace{7mm}

Let
$$x_i=k_\alpha f_ie_ik_\alpha\in k_{\alpha}DS_0(n,r)k_{\alpha}.$$
Denote by
$\underline{DS}(r, \alpha)$ the subalgebra of $  k_{\alpha} DS_0(r,r) k_{\alpha}$ generated by $x_i$ for $1\leq i\leq r-1$.
When $r=3$, the algebra $\underline{DS}(r, \alpha)$  is five dimensional, the orbit basis elements are determined by the following matrices.
\vspace{3mm}
$$
\left(
\begin{matrix}
1 & 0&0 \\
0& 1&0\\
0&0&1\\
\end{matrix}
\right),
\left(
\begin{matrix}
1 & 0&0 \\
0& 0&1\\
0&1&0\\
\end{matrix}
\right),
\left(
\begin{matrix}
0 & 0&1 \\
1& 0&0\\
0&1&0\\
\end{matrix}
\right),
\left(
\begin{matrix}
0 & 1&0 \\
1& 0&0\\
0&0&1\\
\end{matrix}
\right),
\left(
\begin{matrix}
0 & 1&0 \\
0& 0&1\\
1&0&0\\
\end{matrix}
\right).\\
$$

\vspace{3mm}

Similar to  $T_w$, $$x_w=x_{i_1}\dots x_{i_t}$$ is well-defined, where $s_{i_1}\dots s_{i_t}$ is a reduced expression of $w$.
%Theorem \ref{maintheorem} %or
By direct computation following  the fundamental multiplication rules and  the definition of $DS_0(n, r)$,
we have the following lemma.

\begin{lemma}\label{TLsurj}
The elements  $x_i$ for $1\leq i\leq r-1$ satisfy the generating relations of ${NTL}(r)$.
Consequently,  there is an epimorphism
$$ {NTL}(r)\longrightarrow  \underline{DS}(r,\alpha), \;\; T_i\mapsto x_i, \;\;\forall i.$$
\end{lemma}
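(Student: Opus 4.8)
The plan is to verify directly that the elements $x_i = k_\alpha f_i e_i k_\alpha$ satisfy each of the three families of defining relations of $NTL(r)$, namely $x_i^2 = 0$, the commutation $x_i x_j = x_j x_i$ for $|i-j|>1$, and the vanishing $x_i x_{i+1} x_i = 0$ (equivalently $x_{i+1}x_i x_{i+1}=0$); once these hold, the universal property of the presentation of $NTL(r)$ yields the algebra map $T_i \mapsto x_i$, and its image is by definition $\underline{DS}(r,\alpha)$, hence the map is surjective. Since $\alpha = (1,\dots,1)$, every basis matrix lying in $k_\alpha DS_0(r,r) k_\alpha$ is a permutation matrix, which keeps the combinatorics transparent. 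I would carry out each relation using Lemma~\ref{FundMult} and Lemma~\ref{DFundMult} to compute the underlying products in $S_0(r,r)$, and then apply the definition of $\star$ (Lemma~\ref{geominterpretation}, or equivalently the condition $E(e_A)+E(e_B)=E(e_A\cdot e_B)$) to decide whether the product survives or is killed in $DS_0(r,r)$.

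First I would treat $x_i^2 = 0$. In $S_0(r,r)$ one computes $k_\alpha f_i e_i k_\alpha f_i e_i k_\alpha$; the inner $e_i k_\alpha f_i$ collapses via an idempotent relation $C_{ii}$ in $S_0(r,r)$, so that the length of the path strictly decreases. Concretely, $x_i^2$ corresponds to a product of two basis elements $e_A \cdot e_A$ (with $A$ the transposition matrix $\diag$ with rows $i,i+1$ swapped) where the $E$-degrees fail to add: $E(e_A)_i + E(e_A)_i = 2 > 1 = E(e_A\cdot e_A)_i$, so the $\star$-product is zero by definition of $DS_0(r,r)$. This is exactly the phenomenon recorded in Lemma~\ref{subadd}, and it is the reason $T_i^2 = T_i$ in $H_0$ degenerates to $T_i^2 = 0$ here. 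The commutation relation for $|i-j|>1$ is routine: the generators $e_i,f_i,e_j,f_j$ commute pairwise by the relations $P_{ij},N_{ij},C_{ij}$ when $|i-j|>1$, and these length-preserving relations pass unchanged to $DS_0(r,r)$, so $x_i x_j = x_j x_i$ needs no degree bookkeeping.

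The main work, and the step I expect to be the genuine obstacle, is the braid-vanishing relation $x_i x_{i+1} x_i = 0$. Here I would track the matrix products explicitly: starting from $k_\alpha$, applying $f_i e_i$, then $f_{i+1} e_{i+1}$, then $f_i e_i$ again, and checking at which stage the additivity of $E$-degrees first fails. In $H_0(r)$ the corresponding product $T_i T_{i+1} T_i = T_{s_i s_{i+1} s_i}$ is a nonzero basis element (the longest element of the parabolic $\langle s_i,s_{i+1}\rangle$), but its reduced word has length $3$ while the product of the three $x$'s, read as a single path, has strictly greater $E$-length; the discrepancy is precisely one idempotent collapse, so the $\star$-product vanishes. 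Writing $T_i T_{i+1} T_i = T_w$ with $\ell(w)=3$ and comparing $\sum E(\text{factors})$ against $E(e_w)$ via Lemma~\ref{subadd} makes the vanishing transparent without needing to display all the intermediate matrices. Once all three relation-families are confirmed, the universal property of $NTL(r)$ delivers the homomorphism and surjectivity onto the generated subalgebra $\underline{DS}(r,\alpha)$ is immediate, completing the proof; the subtlety throughout is only in correctly identifying when the degree function is additive, and I would lean on Lemma~\ref{subadd} and Lemma~\ref{geominterpretation} rather than brute-force matrix arithmetic.
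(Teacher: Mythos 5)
Your proposal is correct and follows exactly the route the paper intends: the paper gives no written argument beyond the remark that the lemma follows ``by direct computation following the fundamental multiplication rules and the definition of $DS_0(n,r)$,'' and your verification of $x_i^2=0$, the commutation for $|i-j|>1$, and $x_ix_{i+1}x_i=0$ via the failure of additivity of the degree function $E$ (Lemma \ref{subadd}) is precisely that computation spelled out. The only nitpick is notational (the middle idempotent in $x_i^2$ collapsing under $C_{ii}$ sits at $k_{\alpha+\alpha_i}$, i.e.\ the relation used is $e_{i,\alpha}f_{i,\alpha+\alpha_i}=k_{\alpha+\alpha_i}$), which does not affect the argument.
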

%When $r=3, 4 $,  the two  algebras have the same dimensions,  $5$  and  $14$ respectively, and  so  they are isomorphic.

%We will produce a peak picture from each matrix that determines an orbit basis elment in $DS_0(r, \alpha)$.
Any matrix that determines a nonzero orbit in  $\underline{DS}(r, \alpha)$ is a permutation matrix. We call an nonzero entry
that is below the
diagonal a peak entry. For a peak entry $(i, j)$, which implies that $j<i$, we call the entries $(j, j)$ and $(i, i)$ the feet of the peak.
For the five matrices above, which determine the orbit basis  elements in $\underline{DS}(r, \alpha)$, we connect the peaks, feet and
diagonal $1$-entry, using zig-zag lines as follows.
\vspace{3mm}
$$
\xymatrix@!=0.05cm @M=0pt{
1 \ar@{-}[dr]& 0&0 \\
0& 1\ar@{-}[dr]&0\\
0&0&1 \\
} \;\;\;\;\;
\xymatrix@!=0.05cm @M=0pt{
1\ar@{-}[dr] & 0&0 \\
0& 0\ar@{-}[d]&1\\
0&1\ar@{-}[r]&0\\
}\;\;\;\;\;
\xymatrix@!=0.05cm @M=0pt{
0\ar@{-}[d] & 0&1 \\
1\ar@{-}[r]& 0\ar@{-}[d]&0\\
0&1\ar@{-}[r]&0\\
}
\;\;\; \;\;
\xymatrix@!=0.05cm @M=0pt{
0\ar@{-}[d] & 1&0 \\
1\ar@{-}[r]& 0\ar@{-}[dr]&0\\
0&0&1\\
}\;\;\;\;\;
\xymatrix@!=0.05cm @M=0pt{
0 \ar@{-}[dd]& 1&0 \\
0& 0&1\\
1\ar@{-}[rr]&0&0\\
}
$$

\vspace{3mm}

\noindent In this way,  we obtain a well defined one-to-one correspondence between the  basis elements of $NTL(3)$
and $\underline{DS}(3, \alpha)$.
In fact, such a one-to-one correspondence exists for all $r$.

\begin{theorem}\label{NTL}
The two algebras
${NTL}(r)$ and $\underline{DS}(r,\alpha)$  are isomorphic.
\end{theorem}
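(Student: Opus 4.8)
The plan is to upgrade the epimorphism of Lemma \ref{TLsurj} to an isomorphism by showing that $\dim_{\mathbb{F}} \underline{DS}(r,\alpha) \geq \dim_{\mathbb{F}} NTL(r)$, since the reverse inequality is immediate from the surjection $NTL(r) \twoheadrightarrow \underline{DS}(r,\alpha)$. As noted before the statement, $\dim NTL(r)$ is the Catalan number and its basis is indexed by the elements $w \in S_r$ admitting no $s_i s_j s_i$ subword with $|i-j|=1$ in any reduced expression (equivalently, $321$-avoiding permutations). So it suffices to produce that many linearly independent elements in $\underline{DS}(r,\alpha)$, and to identify them explicitly with the $x_w$.

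First I would analyze the basis elements $x_w$ of $\underline{DS}(r,\alpha)$ concretely. By Lemma \ref{geominterpretation} and the fundamental multiplication rules, each nonzero $x_w = k_\alpha f_{i_1} e_{i_1} \cdots f_{i_t} e_{i_t} k_\alpha$ is either zero or an orbit $e_A$ for a permutation matrix $A$ (since $\alpha = (1,\ldots,1)$ forces $\mathrm{ro}(A)=\mathrm{co}(A)=\alpha$, i.e.\ $A$ is a permutation matrix). The key computation is to track, for a reduced word $s_{i_1}\cdots s_{i_t}$ of $w$, which permutation matrix $A_w$ arises and precisely when the $\star$-product vanishes. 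I expect the dimension condition in $DS_0(r,r)$ (equivalently the additivity $E(e_A)+E(e_B)=E(e_A\cdot e_B)$ of Lemma \ref{subadd} / Lemma \ref{lemma4.1}) to force $x_w = 0$ exactly when $w$ contains an $s_i s_j s_i$ subword, matching the basis description of $NTL(r)$ in the preceding lemma. The zig-zag peak pictures drawn for $r=3$ are meant to encode this bijection $w \mapsto A_w$ in general: peaks below the diagonal correspond to descents/inversions, and the $321$-avoidance translates into the geometric additivity condition holding at each multiplication step.

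Concretely, I would show two things. \textbf{(a)} For each $321$-avoiding $w$, the product $x_w$ is a \emph{nonzero} orbit $e_{A_w}$, and the map $w \mapsto A_w$ is injective; this gives $\dim \underline{DS}(r,\alpha) \geq$ (number of $321$-avoiding $w$) $= \dim NTL(r)$, and combined with the surjection forces equality and hence that $\{x_w\}$ (over $321$-avoiding $w$) is a basis sent bijectively onto the $NTL$-basis $\{T_w\}$. \textbf{(b)} For $w$ \emph{not} $321$-avoiding, $x_w = 0$, consistently with $T_w = 0$ in $NTL(r)$. For (a) nonvanishing, the cleanest argument is to exhibit $e_{A_w}$ directly as a reduced word of the form in Lemma \ref{monomial}(2) and invoke Lemma \ref{subadd}/\ref{lemma4.1} to verify the additivity holds at every step, so no $\star$-factor collapses to zero; injectivity then follows because distinct $321$-avoiding permutations give distinct permutation matrices.

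The main obstacle will be step (a): proving that the additivity condition $\dvector f'\cap h' = \dvector f\cap g + \dvector g\cap h - \dvector g$ of Lemma \ref{lemma4.1} holds at \emph{every} intermediate multiplication in the product $x_w = x_{i_1}\cdots x_{i_t}$ for a reduced, $321$-avoiding word, so that no factor vanishes and $x_w$ genuinely equals the expected permutation orbit. This is essentially a careful induction on the length $t = \ell(w)$: assuming $x_{w'}=e_{A_{w'}}$ is a nonzero permutation orbit for $w' = s_{i_2}\cdots s_{i_t}$ with $\ell(s_{i_1}w') = \ell(w')+1$, one must check that left-multiplication by $x_{i_1} = k_\alpha f_{i_1}e_{i_1}k_\alpha$ preserves the additivity and produces $e_{A_{s_{i_1}w'}}$; the $321$-avoidance is exactly what guarantees the relevant feet of the new peak have not already been consumed, so the dimension count is additive rather than strict. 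Verifying this combinatorially on the permutation matrices (tracking peaks and feet as in the $r=3$ pictures) is where the real content lies, whereas the surjectivity, the Catalan-number count, and the final dimension comparison are routine once (a) and (b) are in place.
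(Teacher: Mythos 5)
Your plan coincides with the paper's own proof: both start from the surjection of Lemma \ref{TLsurj} and then establish $\dim_{\mathbb{F}}\underline{DS}(r,\alpha)\geq\dim_{\mathbb{F}}NTL(r)$ by exhibiting, for each basis element of $NTL(r)$ (indexed by peak pictures, equivalently $321$-avoiding permutations), a nonzero product of the $x_i$ whose orbit matrix realises that picture, distinct pictures giving distinct orbits. The only difference is that the paper discharges what you call the main obstacle by fixing one convenient reduced word per $w$ --- grouping the factors peak by peak as $x^{(l)}=x_{i_l-1}\cdots x_{j_l+1}x_{j_l}$ --- for which the additivity $E(e_A)+E(e_B)=E(e_A\cdot e_B)$ at every step is immediate from the fundamental multiplication rules, rather than running an induction over arbitrary reduced words.
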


\begin{proof}
First note the nonzero orbits $e_A$ in $\underline{DS}(r, \alpha)$ form a basis and each orbit $e_A$ is uniquely determined by
the matrix $A$.
By Lemma \ref{TLsurj}, there is  a surjective morphism from ${NTL}(r)$ to $\underline{DS}(r,\alpha)$. So it suffices to show that there exists a nonzero element generated by the $x_i$s such that the corresponding matrix
produces the peak picture.

First identify the triangle, in which  we draw the peak pictures,  with the lower triangular part of an $r\times r$ matrix.
The peaks give us peak entries, $(i_1, j_1), \dots, (i_s, j_s)$ where for any $l$,
$$ j_l<i_l <i_{l+1} \mbox{ and } j_l<j_{l+1}.$$
 Let $$
x^{(l)}=x_{i_l-1}\dots x_{j_l+1}x_{j_l} \mbox{ and } x= x^{(1)}\dots x^{(s)}.
$$

By the fundamental multiplication rules,
the multiplication of $x_{i+1}$ with  $x_{i}\dots x_{j_s} k_\alpha$ moves  a $1$ on or above  
the diagonal  further up in the same column
and a $1$ on or below the diagonal further down in the same column and thus
$$ E(x_{i+1})+E(x_{i}\dots x_{j_s} k_\alpha )=E(x_{i+1}x_{i}\dots x_{j_s}k_\alpha).$$
That is, the equality in the definition of $\star$ holds.
Therefore,
$$x^{(s)} k_\alpha\not=0.$$
%and it  gives us a peak entry at $(i_s, j_s)$, because
%by the fundamental multiplication rules,
%the multiplication of $x_{i+1}$ with  $x_{i}\dots x_{j_s} k_\alpha$ moves  a $1$ on  the diagonal  further up in the same column
%and a $1$ on or below the diagonal further down in the same column and thus
%$$ E(x_{i+1})+E(x_{i}\dots x_{j_s} k_\alpha )=E(x_{i+1}x_{i}\dots x_{j_s}k_\alpha),$$
%that is, the equality in the definition of $\star$ holds.
Similarly,  $$x^{(l)}( x^{(l+1)}\dots x^{(s)} k_\alpha)\not=0$$
and when compared with $ x^{(l+1)}\dots x^{(s)} k_\alpha$, it has a new  peak at $(i_l, j_l)$.
Thus we have found a nonzero element $x=xk_\alpha$ such that the corresponding matrix gives us the required peak picture.
\end{proof}

\begin{example}
In this example we demonstrate the process of obtaining the matrix with two peaks above 
Theorem \ref{NTL} 
using the construction in the proof of the theorem.
The peak picture  has two peak entries $(2, 1)$ and $(3, 2)$. By definition, 
$$x^{(2)}=x_2 \mbox{ and } x^{(1)}=x_1.$$
Then $x^{(2)}k_\alpha$ and  $x^{(1)}x^{(2)}k_\alpha$ are the orbit basis elements corresponding to the matrices, respectively,  

$$\left( \begin{matrix}
1& 0&0 \\
0& 0&1\\
0&1&0\\
\end{matrix}\right)  \quad \text{ and } \quad
 \left(\begin{matrix}
0 &0 &1 \\
1& 0&0\\
0&1&0\\
\end{matrix}\right).
$$
\end{example}

We have the following special version of Lemma \ref{geominterpretation}.

\begin{lemma} \label{geominterpretation2} Use the same notation Lemma \ref{geominterpretation}. Further, assume that all the flags are isomorphic to
$\oplus_{i=1}^nP_i$ and $[f, g]=x_i$ for some $i$. Then the following is true in $\underline{DS}(r, \alpha)$.
$$[f, g]\star [g, h]=
\left\{
\begin{tabular}{ll}
$[f', h']$ & if  $\dim f'\cap h'=\dim g\cap h-1 $,
\\
$0$ & otherwise.
\end{tabular}
\right.$$
\end{lemma}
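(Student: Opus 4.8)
The plan is to derive this scalar criterion directly from the dimension-vector criterion of Lemma \ref{geominterpretation} by summing coordinates, using the coordinatewise inequality of Lemma \ref{lemma4.1}. Throughout, for an intersection flag I read $\dim$ as the total dimension, i.e. the sum of the entries of the corresponding dimension vector $\dvector(-)$; with this reading the statement is just the scalar shadow of the vector statement. The only computation with real content is to determine the effect of the hypothesis $[f,g]=x_i$ on $\dvector\, f\cap g$.

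First I would identify $x_i$ as an orbit. Writing $k_\alpha=e_I$ with $I$ the $r\times r$ identity matrix and applying Lemma \ref{FundMult} to $k_\alpha f_ie_ik_\alpha$, one finds that $e_ik_\alpha$ is the orbit of $I+E_{i,i+1}-E_{i+1,i+1}$ and that applying $f_i$ turns this into the permutation matrix $Y_i=I-E_{ii}-E_{i+1,i+1}+E_{i,i+1}+E_{i+1,i}$ of the transposition $s_i$; hence $x_i=e_{Y_i}$. Writing $f=(V_\bullet)$, $g=(W_\bullet)$, $h=(H_\bullet)$, all complete flags so that $\dvector g=(1,2,\dots,r)$, and using the telescoping identity $\dim(V_l\cap W_l)=\sum_{p,q\le l}a_{pq}$ with $a_{pq}=(Y_i)_{pq}$, I would compute that $\dvector\, f\cap g$ agrees with $\dvector g$ in every coordinate except the $i$-th, where it is one smaller. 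Thus, writing $u_i$ for the $i$-th standard basis vector, $\dvector\, f\cap g=\dvector g-u_i$ and $\dim f\cap g=\dim g-1$, so that $\dvector\, f\cap g+\dvector\, g\cap h-\dvector g=\dvector\, g\cap h-u_i$.

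With this, Lemma \ref{geominterpretation} reads: $[f,g]\star[g,h]=[f',h']$ exactly when $\dvector\, f'\cap h'=\dvector\, g\cap h-u_i$, while Lemma \ref{lemma4.1} guarantees $\dvector\, f'\cap h'\ge \dvector\, g\cap h-u_i$ coordinatewise in every case. Putting $\gamma=\dvector\, f'\cap h'-(\dvector\, g\cap h-u_i)$, a vector of nonnegative integers, and summing coordinates gives $\dim f'\cap h'-(\dim g\cap h-1)=\sum_l\gamma_l\ge 0$. Hence $\dim f'\cap h'=\dim g\cap h-1$ forces $\sum_l\gamma_l=0$, so $\gamma=0$, so the vector equality holds and the product equals $[f',h']$; conversely the vector equality gives $\gamma=0$ and hence the scalar equality. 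In all remaining configurations $\gamma\ne 0$, so $\dim f'\cap h'>\dim g\cap h-1$ and the product is $0$. This is precisely the claimed dichotomy.

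The main obstacle is the computation in the second step: showing that $x_i$ really is the transposition orbit and, more importantly, that the resulting drop in $\dvector\, f\cap g$ is concentrated in the single coordinate $i$ rather than distributed over several coordinates — this is what lets a one-dimensional scalar drop detect the full vector equality. Everything after that is the formal observation that a sum of nonnegative integers vanishes if and only if each term does, which converts the coordinatewise information of Lemma \ref{lemma4.1} into the single scalar condition in the statement.
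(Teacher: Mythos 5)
Your proposal is correct and follows essentially the same route as the paper: the paper's proof consists of observing that $g/(f\cap g)$ is the simple representation at vertex $i$ (equivalently, $\dvector\, f\cap g=\dvector\, g-u_i$, which is exactly what your explicit computation with the transposition matrix $Y_i$ establishes) and then invoking Lemmas \ref{lemma4.1} and \ref{geominterpretation}, so that the coordinatewise inequality turns the scalar condition into the vector one. Your write-up merely makes explicit the matrix computation and the ``sum of nonnegative integers vanishes iff each term does'' step that the paper leaves implicit.
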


\begin{proof}
Note that as a representation, $g/(f\cap g)$ is isomorphic to the simple representation of the linear quiver $A_n$ at vertex $i$,
since $[f, g]=x_i$. Now the lemma follows from Lemma \ref{lemma4.1} and \ref{geominterpretation}.
\end{proof}

\begin{remark} In the light of Lemma \ref{geominterpretation2},  Theorem \ref{NTL} gives a geometric realisation of nil-Temperley-Lieb algebras, via double 
flag varieties.
\end{remark}

\subsection{An observation}

Using the presentation of $S_0(r, r)$ in Section \ref{presentation}, we define a new algebra $\hat{D}S_0(r, r)$  as a quotient algebra of
$\mathbb{F}\Sigma(r, r)$. This new algebra has the same relations as $S_0(r, r)$ except that the idempotent
relations $$e_{i, \alpha}f_{i, \lam}=k_{\lam},  \quad f_{i, \alpha}e_{i, \mu}=k_{\mu}$$ are replaced by
\begin{equation}\label{hat}e_{i, \alpha}f_{i, \lam}=0 \mbox{ and } f_{i, \alpha}e_{i, \mu}=0,\end{equation}
for $1\leq i\leq r-1$, where $\lam=\alpha+\alpha_i$, $\mu=\alpha-\alpha_i$, $\alpha$ is the composition in
$\Lambda(r, r)$ with all the entries equal to $1$ and $\alpha_i$ is defined in Section \ref{presentation}.
The new algebra $\hat{D}S_0(r, r)$ has fewer zero relations coming from
idempotent relations in $S_0(r, r)$ than $D_{\underline{0}}(r, r)$. We only force those idempotent relations that go via the 
centre $k_{\alpha}$ of the simplex $\Sigma(n, r)$ to be $0$. 
Let
$$ y_i=e_{i, \alpha-\alpha_i}f_{i, \alpha}.$$Note that by the multiplication rules, 
$$ y_i=e_{i}f_{i, \alpha}= e_{i, \alpha-\alpha_i}f_{i},$$
because once the starting or ending idempotent is given, then the 
product of a sequence of $e_i$s and $f_j$s is determined, we don't need to indicate the composition indices  
$\lambda$ for the other factors.

\begin{lemma}\label{lastlem}
The elements $y_i$, where $1\leq i\leq r-1$, satisfy the generating relations of $NTL(r)$.
\end{lemma}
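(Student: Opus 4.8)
The relations to be checked are $y_i^2=0$, $y_iy_j=y_jy_i$ for $|i-j|>1$, and the defining $NTL(r)$ relations $y_iy_{i+1}y_i=0=y_{i+1}y_iy_{i+1}$ (the last family being exactly what separates $NTL(r)$ from the nil-Hecke algebra). The overall plan is to rewrite each product as a word in $\mathbb{F}\Sigma(r,r)$ and evaluate it using the defining relations of $\hat{D}S_0(r,r)$, the crucial tools being the two central zero relations $f_{i,\alpha}e_{i,\alpha-\alpha_i}=0$ and $e_{i,\alpha}f_{i,\alpha+\alpha_i}=0$.

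The first two families are routine. For the nilpotency, write $y_i^2=e_{i,\alpha-\alpha_i}\,(f_{i,\alpha}e_{i,\alpha-\alpha_i})\,f_{i,\alpha}$; the bracketed factor \emph{is} the central relation $f_{i,\alpha}e_{i,\alpha-\alpha_i}=0$, so $y_i^2=0$ at once. For $|i-j|>1$ the four arrows building $y_i$ and $y_j$ act on disjoint pairs of rows and columns, and the commutativity relations $C_{ij,\lambda}$ ($i\neq j$) together with the commuting Serre relations allow one to slide the two loops past each other without ever producing a central subword; hence both $y_iy_j$ and $y_jy_i$ reduce to the same basis element $e_A$ with $A$ the permutation matrix of $s_is_j$, and in particular they are equal.

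The substantial step is $y_iy_{i+1}y_i=0$ (the relation $y_{i+1}y_iy_{i+1}=0$ being symmetric). I would first compute the inner product by the fundamental multiplication rules of Lemma \ref{FundMult}, obtaining the reduced length-four path $y_{i+1}y_i=e_{i+1,\alpha-\alpha_{i+1}}\,f_{i+1,\alpha}\,e_{i,\alpha-\alpha_i}\,f_{i,\alpha}$, supported on the relevant $3\times 3$ block. One then left-multiplies by $y_i=e_{i,\alpha-\alpha_i}f_{i,\alpha}$ and tries to move the outer $f_{i,\alpha}$ inwards, using only the length-preserving relations $P_{ij}$, $N_{ij}$ and the commutativity relations, so as to place it immediately to the left of an $e_{i,\alpha-\alpha_i}$. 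Should this succeed, that pair is the central relation $f_{i,\alpha}e_{i,\alpha-\alpha_i}=0$, and the whole triple product collapses exactly as $y_i^2$ does.

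The main obstacle is precisely this last manoeuvre, and it is where I expect the real difficulty to lie. When the outer $f_{i,\alpha}$ is slid inwards it meets the adjacent generators $e_{i+1}$ and $f_{i+1}$, and the only length-reducing step available is an idempotent relation $C_{ii}$ fired at the \emph{boundary} vertex $\alpha-\alpha_i-\alpha_{i+1}$ rather than at $\alpha-\alpha_i$; since that vertex is not one of the two central vertices, this idempotent is \emph{not} among those set to zero in $\hat{D}S_0(r,r)$, so a direct word reduction does not obviously yield a central relation. To settle the statement one must therefore analyse carefully which idempotent governs the length drop in $y_i\cdot(y_{i+1}y_i)$: the key technical point to establish is that, modulo the kept (non-central) relations of $\hat{D}S_0(r,r)$, this reduction can be routed through the central idempotent at $\alpha-\alpha_i$. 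I would attempt this either by a more delicate bookkeeping of the intermediate flags via the geometric product of Lemma \ref{geominterpretation2}, or by comparing with the fully degenerate algebra $DS_0(r,r)$, where every boundary idempotent vanishes and the analogous relations for $x_i$ already hold by Lemma \ref{TLsurj}. Pinning down this comparison precisely inside $\hat{D}S_0(r,r)$ itself is the delicate heart of the proof.
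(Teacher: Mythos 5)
Your handling of $y_i^2=0$ (via the central subword $f_{i,\alpha}e_{i,\alpha-\alpha_i}=0$) and of the commutations $y_iy_j=y_jy_i$ for $|i-j|>1$ is correct and matches what the paper leaves as a direct check. But on the crucial relation $y_iy_{i+1}y_i=0$ your proposal stops exactly where the proof has to happen. You correctly diagnose that sliding the outer $f_{i,\alpha}$ inwards cannot manufacture a central subword -- the only length-reducing moves available fire kept idempotent relations at boundary vertices other than the two central ones -- and you then leave the resolution open, offering two fallbacks, neither of which closes the gap. The geometric bookkeeping of Lemma \ref{geominterpretation2} lives in $\underline{DS}(r,\alpha)\subseteq DS_0(r,r)$, not in $\hat{D}S_0(r,r)$, and the comparison with the fully degenerate algebra points the wrong way: a surjection $\hat{D}S_0(r,r)\to DS_0(r,r)$, even granted, would only show that the \emph{image} of $y_iy_{i+1}y_i$ vanishes, whereas the claim is vanishing in $\hat{D}S_0(r,r)$ itself, which has strictly fewer zero relations. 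So this is a genuine gap, not a routine verification left to the reader.

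The missing idea is an intermediate vanishing result for an idempotent, which is how the paper proceeds: in $\hat{D}S_0(r,r)$ one has $k_\lambda=0$ for $\lambda=\alpha-\alpha_i+\alpha_{i+1}=(1,\dots,1,0,3,0,1,\dots,1)$. Indeed, since $\lambda_i=0$ and $\lambda_{i+2}=0$, the \emph{kept} idempotent relations (their loops pass through $\alpha+\alpha_{i+1}$ and $\alpha-\alpha_i$, not through $\alpha$) give $k_\lambda=f_i^2e_i^2e_{i+1}f_{i+1}k_\lambda$; the Serre relation $e_i^2e_{i+1}=e_ie_{i+1}e_i$ and the commutation $e_if_{i+1}=f_{i+1}e_i$ rewrite this as $f_i^2e_ie_{i+1}f_{i+1}e_ik_\lambda$, and now $e_ik_\lambda$ lands at $\alpha+\alpha_{i+1}$, where the subword $e_{i+1}f_{i+1}$ is exactly the central zero relation (\ref{hat}); hence $k_\lambda=0$. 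With this in hand the triple product dies not by producing a central subword but by \emph{rerouting through a dead vertex}: commutations and the same Serre relation turn $y_iy_{i+1}y_i$ into $f_if_{i+1}e_i^2e_{i+1}f_{i,\alpha}$, whose initial segment $e_{i+1}f_{i,\alpha}$ passes through the vertex $\lambda$, so one may insert $k_\lambda=0$ and conclude. Your instinct that the length drop is governed by a non-central boundary idempotent was exactly right; what you did not find is that this very idempotent can itself be killed from the central relations, which is the step your proposal needed and lacks.
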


\begin{proof}
It can be checked directly that when $|i-j|>1$, $y_iy_j=y_jy_i$ for $1\leq i, j\leq r-1$. 
So it suffices to prove that $y_iy_{i+1}y_i=y_{i+1}y_iy_{i+1}=0$. Since the proof is similar, we 
only prove that $y_iy_{i+1}y_i=0$. 

For $1\leq i\leq r-1$, ${\alpha-\alpha_i+\alpha_{i+1}}$ is of the form $(1,\ldots,1,0,3,0,1,\ldots,1)$, 
which has  $3$ at the $(i+1)$th entry.
We first prove that $k_{\alpha-\alpha_i+\alpha_{i+1}}=0$. 
We have
$$
\begin{aligned}k_{\alpha-\alpha_i+\alpha_{i+1}}&=
f_{i}^2e_{i}^2 e_{i+1}f_{i+1} k_{\alpha-\alpha_i+\alpha_{i+1}}\\
&=f_{i}^2  e_{i} e_{i+1}e_{i}f_{i+1} k_{\alpha-\alpha_i+\alpha_{i+1}}\\
&=f_{i}^2  e_{i} e_{i+1}f_{i+1} e_{i}k_{\alpha-\alpha_i+\alpha_{i+1}}\\
&=0,
%f_ie_{i+1}^2f_{i+1}^2e_ik_{\alpha-\alpha_i+\alpha_{i+1}}\\
%&=f_i(e_{i+1}f_{i+1})^2e_ik_{\alpha-\alpha_i+\alpha_{i+1}}\\
%&=f_ie_{i+1}f_{i+1}(e_{i+1}f_{i+1}k_{\alpha+\alpha_{i+1}})e_ik_{\alpha-\alpha_i+\alpha_{i+1}}.\\
\end{aligned}$$
by (\ref{hat}).
%, $e_{i+1}f_{i+1}k_{\alpha+\alpha_{i+1}}=0$. This proves that $k_{\alpha-\alpha_i+\alpha_{i+1}}=0$.
Consequently, 
$$
\begin{aligned}y_iy_{i+1}y_i&=
e_{i}f_{i}  f_{i+1}  e_{i+1} e_{i}f_{i, \alpha}\\
&=f_{i} e_{i}   f_{i+1}  e_{i+1} e_{i}f_{i, \alpha}\\
&=f_{i}  f_{i+1}   e_{i} e_{i+1} e_{i}f_{i, \alpha}\\
&=f_{i}  f_{i+1}   e_{i}  e_{i}  e_{i+1} f_{i, \alpha}\\
&=f_{i}  f_{i+1}   e_{i}  e_{i} k_{\alpha-\alpha_i+\alpha_{i+1}} e_{i+1} f_{i, \alpha}\\
%e_{i, \alpha-\alpha_i}f_{i, \alpha}e_{i+1, \alpha-\alpha_{i+1}}f_{i+1, \alpha}e_{i, \alpha-\alpha_i}f_{i, \alpha}\\
%&=e_{i, \alpha-\alpha_i}e_{i+1, \alpha-\alpha_i-\alpha_{i+1}}f_{i, \alpha-\alpha_{i+1}}f_{i+1, \alpha}e_{i, \alpha-\alpha_i}f_{i, \alpha}\\
%&=e_{i, \alpha-\alpha_i}e_{i+1, \alpha-\alpha_i-\alpha_{i+1}}(f_{i, \alpha-\alpha_{i+1}}e_{i, \alpha-\alpha_i-\alpha_{i+1}})f_{i+1, \alpha-\alpha_i}f_{i, \alpha}\\
%&=e_{i, \alpha-\alpha_i}e_{i+1, \alpha-\alpha_i-\alpha_{i+1}}k_{\alpha-\alpha_i-\alpha_{i+1}}f_{i+1, \alpha-\alpha_i}f_{i, \alpha}\\
%&=e_{i, \alpha-\alpha_i}e_{i+1, \alpha-\alpha_i-\alpha_{i+1}}f_{i+1, \alpha-\alpha_i}f_{i, \alpha}\\
%&=e_{i, \alpha-\alpha_i}f_{i+1, \alpha-\alpha_i+\alpha_{i+1}}e_{i+1, \alpha-\alpha_i}f_{i, \alpha}\\
%&=e_{i, \alpha-\alpha_i}f_{i+1, \alpha-\alpha_i+\alpha_{i+1}}k_{\alpha-\alpha_i+\alpha_{i+1}}e_{i+1, \alpha-\alpha_i}f_{i, \alpha}\\
&=0.
\end{aligned}$$
This proves the lemma. 
\end{proof}

\begin{example}[An observation] 
$$\xymatrix@!=0.5cm @M=0pt{
&&&& \Sigma(3, 5)&1 \ar@/^0.2pc/[r]&\ar@/^0.2pc/[r]2\ar@/^0.2pc/[d]\ar@/^0.2pc/[l]
 \ar@/^0.2pc/[r]&3\ar@/^0.2pc/[r]^{\gamma_1}\ar@/^0.2pc/[l]\ar@/^0.2pc/[d]^{\beta_2}
 &4\ar@/^0.2pc/[l]^{\gamma_2}\ar@/^0.2pc/[d]&&&&&\\
 &&
 &&&&5\ar@/^0.2pc/[u]\ar@/^0.2pc/[r]^{\beta_3}&\ar@/^0.2pc/[u]^{\beta_1}6\ar@/^0.2pc/[d]^{\beta_8}\ar@/^0.2pc/[l]^{\beta_4}\ar@/^0.2pc/[r]^{\beta_5}
&7\ar@/^0.2pc/[l]^{\beta_6}\ar@/^0.2pc/[u]\ar@/^0.2pc/[d]&&&&\\
 &&&&&&&8\ar@/^0.2pc/[r]\ar@/^0.2pc/[u]^{\beta_7}&9\ar@/^0.2pc/[d]\ar@/^0.2pc/[l]\ar@/^0.2pc/[u]&&&&&\\
 &&&&&&&&10\ar@/^0.2pc/[u]&&\\
 &&&&&&&&& &&\\
  }$$
  The composition of two paths is written in the form such as $\beta_3: 5\longrightarrow 6$, $\beta_5: 6\longrightarrow 7$,
  then $\beta_5\beta_3: 5\longrightarrow 7$.
 The algebra
 $\hat{D}S_0(3, 3)$ is the path algebra of $\Sigma(3, 5)$ with zero relations 
$$\beta_1\beta_2=0, \; \beta_4\beta_3=0, \; \beta_5\beta_6=0,\; \beta_8\beta_7=0$$
and the other generating relations of  $S_0(3,3)$. 

By computation we have that
$k_{\alpha}\hat{D}S_0(3, 3)k_{\alpha}$ is in fact generated by
$$y_1=\beta_2\beta_1=\beta_7\beta_8 \quad \text{ and }\quad y_2=\beta_6\beta_5=\beta_3\beta_4. $$

The vertices $1, 4, 10$ correspond to compositions $\lam=(0,0,3), \mu=(0,3,0), \nu=(3,0,0)$, respectively.
By Lemma \ref{lastlem} and similar proof or the generating relations in $\hat{D}S_0(3, 3)$, we have 
 $$k_\lam=0, \quad k_\mu=0, \quad k_\nu=0$$
and 
$$y_1y_2y_1=y_2y_1y_2=0.$$
So the algebra $k_{\alpha}\hat{D}S_0(3,3)k_{\alpha}$ is $5$-dimensional and  $1, y_1, y_2, y_1y_2, y_2y_1$ 
form a basis.
 So
$$k_{\alpha}\hat{D}S_0(3, 3)k_{\alpha}\cong NTL(3).$$
\end{example}
 
It would be interesting to find out whether $k_{\alpha}\hat{D}S_0(r, r)k_{\alpha}\cong NTL(r)$  for any $r$.

\vspace{15pt}

{\parindent=0cm
BTJ:
Department of Mathematical Sciences,
NTNU in Gj\"ovik, Norwegian University of Science and Technology,
2802 Gj\"ovik, Norway. \\
Email: bernt.jensen@ntnu.no \\}

{\parindent=0cm
XS:
Department of Mathematical Sciences,
University of Bath,
Bath BA2 7AY,
United Kingdom.\\
Email: xs214@bath.ac.uk \\}

{\parindent=0cm
GY:
School of Science,
%Department of Mathematics,\\
Shandong University of Technology,
Zibo 255000 , China\\
Email: yanggy@mail.bnu.edu.cn
\end{document}

\end{document}
{\parindent=0cm
\begin{tabular}{lllll}
Bernt Tore Jensen & \hspace {2cm}& \hspace{2cm} \mbox{ }&Xiuping Su\\
NTNU in Gj\o vik && & Department of Mathematical Sciences \\
Department of Mathematical Sciences, &&& University of Bath \\
2802 Gj\o vik  &&&Bath BA2 7JY\\
Norway && &United Kingdom\\
Email: bernt.jensen@hig.no& && Email: xs214@bath.ac.uk\\\\
\end{tabular}}

\section{An example}

We end this note with an example. We describe the relations for $S_0(3, 2)$ and $DS_0(3, 2)$. Note that in this case,
all the idempotents $k_\lam$ are boundary, so $S_0(3, 2)=I(3, 2)$.
$$\xymatrix@!=0.5cm @M=0pt{
%&&&&& &&&&\\
 &&&\Sigma(3,2):&\ar@/^0.2pc/[r]1
 \ar@/^0.2pc/[r]^{\beta_1}&2\ar@/^0.2pc/[r]^{\beta_2}\ar@/^0.2pc/[l]\ar@/^0.2pc/[d]^{\beta_3}
 &3\ar@/^0.2pc/[l]\ar@/^0.2pc/[d]^{\beta_4}&&&&&\\
 &&&&&\ar@/^0.2pc/[u]4\ar@/^0.2pc/[r]^{\beta_5}
&5\ar@/^0.2pc/[l]\ar@/^0.2pc/[u]\ar@/^0.2pc/[d]^{\beta_6}&&&&\\
 &&&&&&6\ar@/^0.2pc/[u]&&&&&\\
 %&&&&&&&&\\
  }$$

Denote by $\frak l_i$ for $1\leq
i\leq 6$ the idempotents corresponding to the compositions
$(0,0,2), (0,1,1), (0,2,0), (1,0,1), (1,1,0), (2,0,0)$,
respectively. Let ${\beta}_i^{\rm op}$ be the opposite arrow of
$\beta_i$ for $1\leq i\leq 6$. Then $S_0(3,2)$ is generated by
$k_i, \beta_i, {\beta}^{\rm op}_i$ with nonhomogeneous relations
$${\beta}^{\rm op}_1\beta_1=k_1,~~~~ {\beta}^{\rm op}_3\beta_3=k_2,~~~~
\beta_2{\beta}^{\rm op}_2={\beta}^{\rm op}_4\beta_4=k_3,~~~$$$$
\beta_3{\beta}^{\rm op}_3={\beta}^{\rm op}_5\beta_5=k_4,~~~
\beta_5{\beta}^{\rm op}_5=k_5, ~~~\beta_6{\beta}^{\rm op}_6=k_6,$$
and homogeneous relations
$$\begin{aligned}\label{relation}
&\beta_4\beta_2\beta_1=\beta_5\beta_3\beta_1,~~~\beta_6\beta_4\beta_2=\beta_6\beta_5\beta_3,~~~
{\beta}^{\rm op}_1{\beta}^{\rm op}_2{\beta}^{\rm op}_4={\beta}^{\rm op}_1{\beta}^{\rm op}_3{\beta}^{\rm op}_5,\\~~~
&{\beta}^{\rm op}_2{\beta}^{\rm op}_4{\beta}^{\rm op}_6={\beta}^{\rm op}_3{\beta}^{\rm op}
_5{\beta}^{\rm op}_6,
\beta_1{\beta}^{\rm op}_1={\beta}^{\rm op}_2\beta_2,~~~\beta_4{\beta}^{\rm op}_4={\beta}^{\rm op}_6\beta_6.
\end{aligned}$$
The degeneration $DS_0(3,2)$ is generated by $k_i, \beta_i, {\beta}^{\rm op}_i$ with
the following zero relations
$${\beta}^{\rm op}_1\beta_1=0,~~~~ {\beta}^{\rm op}_3\beta_3=0,~~~~
\beta_2{\beta}^{\rm op}_2={\beta}^{\rm op}_4\beta_4=0,~~~$$$$
\beta_3{\beta}^{\rm op}_3={\beta}^{\rm op}_5\beta_5=0,~~~
\beta_5{\beta}^{\rm op}_5=0, ~~~\beta_6{\beta}^{\rm op}_6=0,$$
and the above homogeneous relations.

Recall that $\Sigma(3,5)$ is as follows.
$$\xymatrix@!=0.2cm @M=0pt{
%&&\Sigma(3,5):&&& &&&&\\
 &&&&a\ar@/^0.2pc/[r]& \ar@/^0.2pc/[l]\ar@/^0.2pc/[r]\ar@/^0.2pc/[d]
 \ar@/^0.2pc/[d]&
 \ar@/^0.2pc/[r]\ar@/^0.2pc/[d]\ar@/^0.2pc/[l]&\ar@/^0.2pc/[l]\ar@/^0.2pc/[d]\ar@/^0.2pc/[r]
 &\ar@/^0.2pc/[l]\ar@/^0.2pc/[l]\ar@/^0.2pc/[d]\ar@/^0.2pc/[r]& b\ar@/^0.2pc/[l]\ar@/^0.2pc/[d]&&&&&&&\\
 &&&&&\ar@/^0.2pc/[u]\ar@/^0.2pc/[r]&\ar@/^0.2pc/[u]\ar@/^0.2pc/[l]  \ar@/^0.2pc/[u]\ar@/^0.2pc/[d]
 \ar@/^0.2pc/[r]&\ar@/^0.2pc/[l]\ar@/^0.2pc/[u]\ar@/^0.2pc/[r]\ar@/^0.2pc/[d]\ar@/^0.2pc/[u]
 &\ar@/^0.2pc/[l]\ar@/^0.2pc/[r]\ar@/^0.2pc/[u]\ar@/^0.2pc/[d]& \ar@/^0.2pc/[d]\ar@/^0.2pc/[u]\ar@/^0.2pc/[l]&&&&\\
 && &&&&\ar@/^0.2pc/[u]\ar@/^0.2pc/[r]&\ar@/^0.2pc/[u]\ar@/^0.2pc/[l]\ar@/^0.2pc/[r]
 \ar@/^0.2pc/[l]\ar@/^0.2pc/[d]& \ar@/^0.2pc/[l]\ar@/^0.2pc/[r]\ar@/^0.2pc/[u]\ar@/^0.2pc/[d]&\ar@/^0.2pc/[l]\ar@/^0.2pc/[u]\ar@/^0.2pc/[d]&&\\
 &&&&&&&
\ar@/^0.2pc/[u]\ar@/^0.2pc/[r]&\ar@/^0.2pc/[u]\ar@/^0.2pc/[l]\ar@/^0.2pc/[r]\ar@/^0.2pc/[d]&\ar@/^0.2pc/[d]\ar@/^0.2pc/[l]\ar@/^0.2pc/[u]&&&\\
 &&&&&&&&
\ar@/^0.2pc/[u]\ar@/^0.2pc/[r]&\ar@/^0.2pc/[u]\ar@/^0.2pc/[l]\ar@/^0.2pc/[d]\\
 &&&&&&&&&\ar@/^0.2pc/[u]c&&&&&\\
  }$$
where $a=k_{(0, 0, 5)}$, $b=k_{(0,  5, 0)}$ $c=k_{(5, 0, 0)}$, and
the arrows from left to right, right to left, up to down and down to up
are $e_2$, $f_2$, $e_1$ and $f_1$, respectively. Then the embedding
of $\Sigma(3,2) $ into the interior of $\Sigma(3, 5)$ in the proof of Theorem \ref{maintheorem} can be shown as follows.

$$\xymatrix@!=0.5cm @M=0pt{
&&&&a\cdot\ar@{..}[r]&\cdot\ar@{..}[r]&\cdot\ar@{..}[r]&\cdot\ar@{..}[r]&\cdot\ar@{..}[r]&\cdot b\ar@{..}[d]&& &&&&\\
 &&&& &\ar@{..}[u]\ar@{..}[r]&\ar@/^0.2pc/[r]1\ar@{..}[u]\ar@{..}[d]
 \ar@/^0.2pc/[r]^{\beta_1}&2\ar@{..}[u]\ar@/^0.2pc/[r]^{\beta_2}\ar@/^0.2pc/[l]\ar@/^0.2pc/[d]^{\beta_3}
 &3\ar@{..}[u]\ar@/^0.2pc/[l]\ar@/^0.2pc/[d]^{\beta_4}\ar@{..}[r]&\cdot\ar@{..}[d]&&&&\\
 &&
 &&&&\ar@{..}[r]&\ar@/^0.2pc/[u]4\ar@{..}[d]\ar@/^0.2pc/[r]^{\beta_5}
&5\ar@{..}[r]\ar@/^0.2pc/[l]\ar@/^0.2pc/[u]\ar@/^0.2pc/[d]^{\beta_6}&\cdot\ar@{..}[d]&&&\\
 &&&&&&&\ar@{..}[r]&6\ar@{..}[r]\ar@{..}[d]\ar@/^0.2pc/[u]&\cdot\ar@{..}[d]&&&&\\
 &&&&&&&&\ar@{..}[r]&\cdot\ar@{..}[d]&\\
 &&&&&&&&& c&&\\
  }$$